\theoremstyle{plain}
\newtheorem{theo}{Theorem}[section]
\newtheorem{lemm}[theo]{Lemma}
\newtheorem{prop}[theo]{Proposition}
\newtheorem{coro}[theo]{Corollary}
\newtheorem*{theo*}{Theorem}
\theoremstyle{definition}
\newtheorem{defi}[theo]{Definition}
\newtheorem{cons}[theo]{Construction}
\theoremstyle{remark}
\newtheorem{rem}[theo]{Remark}
\newtheorem{exa}[theo]{Example}
\numberwithin{equation}{section}
\newcommand{\op}{^{\mathrm{op}}}
\newcommand{\cat}{\mathbf}
\newcommand{\oper}{\mathscr}
\newcommand{\on}{\mathrm}
\newcommand{\Emb}{\on{Emb}}
\newcommand{\Map}{\on{Map}}
\newcommand{\Mod}{\cat{Mod}}
\newcommand{\oMod}{\oper{M}od}
\newcommand{\oCom}{\oper{C}om}
\newcommand{\Man}{\cat{Man}}
\newcommand{\un}{\mathbb{I}}
\newcommand{\id}{\mathrm{id}}
\renewcommand{\S}{\cat{S}}
\newcommand{\goto}[1]{\stackrel{#1}{\longrightarrow}}
\newcommand{\HC}[1]{\mathrm{HH}_{#1}}
\newcommand{\HH}[1]{\mathrm{HH}^{#1}}
\newcommand{\Hom}{\underline{\mathrm{Hom}}}
\renewcommand{\hom}{\underline{\mathrm{hom}}}
\renewcommand{\L}{\mathbb{L}}
\newcommand{\R}{\mathbb{R}}
\newcommand{\Disk}{\cat{Disk}}
\title{Higher Hochschild cohomology of the Lubin-Tate ring spectrum}
\author{Geoffroy Horel}
\address{Mathematisches Institut\\
Einsteinstrasse 62\\
D-48149 Münster\\
Deutschland}
\email{geoffroy.horel@gmail.com}\thanks{The author was partially supported by an NSF grant.}
\keywords{factorization homology, Hochschild cohomology, little disk operad, Morava E theory, Lubin-Tate spectrum, spectral sequence}
\begin{document}

\maketitle

\begin{abstract}
We give a method for computing factorization homology of an $\oper{E}_d$-algebra using as an input an algebraic version of higher Hochschild homology due to Pirashvili. We then show how to compute higher Hochschild homology and cohomology when the algebra is \'etale. As an application, we compute higher Hochschild cohomology of the Lubin-Tate ring spectrum.
\end{abstract}

\tableofcontents

This paper is devoted to higher Hochschild cohomology. Let us recall what this construction is. Given $E$ be an $\oper{E}_\infty$-ring spectrum. Hochschild cohomology of an associative algebra $A$ in $\Mod_E$ with coefficients in a bimodule $M$ is the derived homomorphisms object in the category of $A$-$A$-bimodules with source $A$ and target $M$. Higher Hochschild cohomology is the generalization of this construction when $A$ is an $\oper{E}_d$-algebra instead of an associative algebra. One needs to replace the notion of bimodule by the notion of operadic $\oper{E}_d$-module and the definition becomes
\[\HC{\oper{E}_d}(A|E,M)=\R\Hom_{\Mod_A^{\oper{E}_d}}(A,M)\]
where $\Hom_{\Mod_A^{\oper{E}_d}}$ denotes the homomorphism object in the category of operadic $\oper{E}_d$-modules over $A$. 

For practical reasons, we will use a different but equivalent definition of higher Hochschild cohomology inspired by factorization homology. For $A$ an $\oper{E}_d$-algebra in $\Mod_E$ and $V$ a $d$-dimensional framed manifold, there is a spectrum $\int_VA$ which is called the factorization homology of $A$ over $V$. This construction is functorial with respect to maps of $\oper{E}_d$-algebras and with respect to embeddings of framed $d$-manifolds. Moreover, $V\mapsto \int_V A$ is a symmetric monoidal functor. 

This easily implies that $\int_{S^{d-1}\times\mathbb{R}}A$ is an $\oper{E}_1$-algebra in spectra where $S^{d-1}\times\mathbb{R}$ is given a suitable framing. One can show that this $\oper{E}_1$-algebra serves as a universal enveloping algebra for the category of operadic $\oper{E}_d$-modules over $A$. More precisely, we prove in \ref{equivalence of the two definitions} the identity
\[\HC{\oper{E}_d}(A|E,M)\simeq \R\Hom_{A}^{S^{d-1}\times[0,1]}(A,M)\]
where the right hand side is an explicit construction given by a homotopy limit of a certain functor over the poset of disks on the manifold $S^{d-1}\times[0,1]$. In \ref{equivalence}, we prove an equivalence
\[\R\Hom_A^{S^{d-1}\times[0,1]}(A,M)\simeq \R\Hom^{[0,1]}_{\int_{S^{d-1}\times(0,1)}A}(A,M)\]
where the right hand side is a suitable generalization of the homomorphisms between left modules over an $\oper{E}_1$ (as opposed to associative) algebra.  Thus, we reduce the computation of Higher Hochschild cohomology to the computation of the derived homomorphisms between two left modules over an $\oper{E}_1$-algebra.

With this last description, we see that, in order to make explicit computations of higher Hochschild cohomology, the first step is to compute $\int_{S^{d-1}\times\mathbb{R}}A$ with its $\oper{E}_1$-structure. In the fifth section of this paper, we construct a spectral sequence that computes the factorization homology of an $\oper{E}_d$-algebra over a manifold:

\begin{theo*}{(\ref{spectral sequence})}
Let $A$ be an $\oper{E}_d$-algebra in $\Mod_E$, let $M$ be a framed $d$-manifold and $K$ be a homology theory with a $\mathbb{Z}/2$-equivariant K\"unneth isomorphism. There is a spectral sequence
\[\on{E}_{s,t}^2=\HH{M}_{s,t}(K_*A)\implies K_{s+t}(\int_MA)\]
\end{theo*}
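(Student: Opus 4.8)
\emph{Proof idea.} The plan is to write $\int_M A$ as the geometric realization of a simplicial $E$-module built from $E$-linear tensor powers of $A$, smash this with $K$, and read the spectral sequence off the skeletal filtration. For the first ingredient I would invoke the comparison between factorization homology and the Pirashvili-type (Loday) construction developed in the previous sections: for a framed $d$-manifold $M$ there is a simplicial object $\mathcal{L}_\bullet(A)$ in $\Mod_E$ with $|\mathcal{L}_\bullet(A)| \simeq \int_M A$ whose value in simplicial degree $s$ is a coproduct — indexed by a purely combinatorial gadget attached to $M$ (a simplicial model, or an ordered good cover, contributing in degree $s$ the strings of $s$ composable disk inclusions) — of $E$-linear tensor powers $A^{\otimes_E k}$, with faces and degeneracies assembled from the unit of $E$, the $\oper{E}_d$-multiplications of $A$, and the symmetry constraints of $(\Mod_E, \otimes_E)$. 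This is the spectral avatar of Pirashvili's construction: running the same combinatorial recipe on an $\oper{E}_d$-algebra $R$ in graded $K_*E$-modules produces exactly the simplicial graded module whose homology is $\HH{M}_{*,*}(R)$.

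A homology theory on spectra is represented by a spectrum, still written $K$, and $K \wedge (-)$ commutes with geometric realizations, so $K \wedge \int_M A \simeq \bigl|\, [s] \mapsto K \wedge \mathcal{L}_s(A) \,\bigr|$. The skeletal filtration of this simplicial spectrum yields a spectral sequence, homological with $s \ge 0$, with
\[ E^1_{s,t} = K_t\bigl(\mathcal{L}_s(A)\bigr), \qquad d^1 = \textstyle\sum_i (-1)^i (d_i)_* , \]
abutting to $K_{s+t}\bigl(\int_M A\bigr)$; the filtration is exhaustive, which gives conditional convergence, and this upgrades to strong convergence once the relevant groups are bounded below (e.g.\ when $E$ and $A$ are connective).

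It remains to identify the $E^2$-page with $\HH{M}_{*,*}(K_*A)$, and here the $\mathbb{Z}/2$-equivariant K\"unneth isomorphism enters. It makes $K_* \colon (\Mod_E, \otimes_E) \to (\text{graded } K_*E\text{-modules},\, \otimes_{K_*E})$ symmetric monoidal, so $K_*A$ is canonically an $\oper{E}_d$-algebra in graded $K_*E$-modules (a graded-commutative $K_*E$-algebra when $d \ge 2$), and for every $k$ one gets $K_*(A^{\otimes_E k}) \cong (K_*A)^{\otimes_{K_*E} k}$ equivariantly for the full symmetric group $\Sigma_k$ — transposition-equivariance is precisely the hypothesis, and transpositions generate $\Sigma_k$. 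Taking the coproduct over the indexing gadget identifies $E^1_{s,t} = K_t(\mathcal{L}_s(A))$ with the degree-$(s,t)$ part of Pirashvili's simplicial graded module attached to $K_*A$, \emph{compatibly with the face maps}; this compatibility is the single place the equivariance is genuinely needed, the faces permuting tensor factors before applying the multiplication. Hence $d^1$ is the higher Hochschild differential and, passing to homology, $E^2_{s,t} = \HH{M}_{s,t}(K_*A)$.

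The homotopy-theoretic substance — producing a simplicial model of $\int_M A$ whose terms are tensor powers of $A$, and matching its combinatorics with Pirashvili's — is already carried out in the earlier sections and may be quoted, so within this theorem the main obstacle is that last step: checking that $K_*$, equipped with the K\"unneth comparison, intertwines the \emph{structure maps} of the two simplicial objects, which is exactly what forces the K\"unneth isomorphism to be $\mathbb{Z}/2$-equivariant rather than merely additive. If one preferred to bypass the explicit simplicial model, the alternative would be to run the Bousfield--Kan homotopy-colimit spectral sequence directly over the $\infty$-category of framed disks in $M$, after commuting $K \wedge (-)$ past the defining homotopy colimit; the difficulty then relocates to identifying the resulting $E^2$-term — a derived colimit over the disk category — with $\HH{M}(K_*A)$, i.e.\ to a linearization statement to the effect that, once $\oper{E}_d$ is abelianized by passage to $K_*$, the disk diagram collapses onto the combinatorial diagram underpinning Pirashvili's construction.
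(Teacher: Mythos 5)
Your outline follows the paper's route: Proposition \ref{left} realizes $\int_M A$ as a homotopy colimit over the poset $\cat{D}(M)$ of disk configurations, the bar construction turns this into a simplicial object in $\Mod_E$ whose terms are coproducts of tensor powers of $A$, smashing with $K$ degreewise and filtering gives the spectral sequence, and the $\mathbb{Z}/2$-equivariant K\"unneth isomorphism makes $K_*A$ a (graded) commutative $K_*E$-algebra so that the $\on{E}^1$-page becomes the bar complex of $K_*A$. The one genuine hole is the identification of the $\on{E}^2$-page: you assert that matching the disk-poset combinatorics with Pirashvili's construction ``is already carried out in the earlier sections and may be quoted,'' but it is not --- Section 4 only defines $\HH{X}(-|R)$ as a homotopy coend over $\cat{Fin}$ against $\Map(-,X)$, and the comparison with $\cat{D}(M)$ is precisely the content of Proposition \ref{spectral sequence}. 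The paper closes this by observing that $K_*(A\circ\delta)$ factors through $\alpha\colon\cat{D}(M)\to\cat{Fin}$ (this is where commutativity of $K_*A$, hence $d>1$ and the equivariant K\"unneth hypothesis, is actually used), applying the base change
\[*\otimes^{\L}_{\cat{D}(M)}K_*(A\circ\delta)\simeq(\L\alpha_!\,{*})\otimes^{\L}_{\cat{Fin}}K_*(A),\]
and then invoking the external result that $\L\alpha_!\,{*}(S)=\on{hocolim}_{U\in\cat{D}(M)}\cat{Fin}(S,\pi_0(U))$ is weakly equivalent to $\on{Sing}(M)^S$. Your closing sentence correctly names this ``linearization statement'' as the crux, but leaves it unproved; without it you have only identified the $\on{E}^2$-page with the homology of the poset $\cat{D}(M)$ with coefficients in $K_*(A\circ\delta)$, not with $\HH{M}_{*,*}(K_*A)$.
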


Let us say a few words about the $\on{E}^2$-page. Given a commutative ring $k$, Pirashvili defines a functor $(X,A)\mapsto \HH{X}(A)$ where $X$ is a simplicial set, $A$ is a commutative algebra in $k$-modules and $\HH{X}(A)$ is a chain complex of $k$-modules. When $X=S^1$, this object is quasi-isomorphic to ordinary Hochschild homology. Our spectral sequence computing factorization homology is given by Pirashvili's higher Hochschild homology on the $\mathrm{E}^2$-page.

\medskip

In the sixth section, we make an explicit computation in the case of the Lubin-Tate spectrum (also known as Morava $E$-theory) $E_n$. Using the étaleness of the algebra $(K_n)_*E_n$ and the fact that $E_n$ is $K_n$-local, we can prove that for any $\oper{E}_d$-structure on $E_n$ that induces the correct multiplication on $K_n$-homology, the map
\[E_n\to \int_{S^{d-1}\times\mathbb{R}} E_n\]
is a $K_n$-homology equivalence. Using the fact that $E_n$ is $K_n$-local, we can prove

\begin{theo*}{(\ref{E_n hochschild cohomology})}
The map:
\[\HC{\oper{E}_d}(E_n)\to E_n\]
is a weak equivalence.
\end{theo*}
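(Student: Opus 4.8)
The plan is to run the computation through the factorization-homology model of higher Hochschild cohomology provided by \ref{equivalence of the two definitions} and \ref{equivalence}. Write $B:=\int_{S^{d-1}\times\mathbb{R}}E_n$; via $(0,1)\cong\mathbb{R}$ this is the $\oper{E}_1$-algebra in $\Mod_{E_n}$ appearing in \ref{equivalence}, and combining those two results gives
\[\HC{\oper{E}_d}(E_n)\;\simeq\;\R\Hom^{[0,1]}_B(E_n,E_n),\]
where $E_n$ denotes the left $B$-module whose restriction along the unit $u\colon E_n\to B$ is the free rank-one $E_n$-module. Under this identification the map of the statement corresponds to the restriction map $u^*$ along $u$, i.e. to the composite
\[\R\Hom^{[0,1]}_B(E_n,E_n)\;\longrightarrow\;\R\Hom^{[0,1]}_{E_n}(E_n,E_n)\;\simeq\;E_n,\]
the second equivalence recording that the derived self-maps of a free rank-one module over an $\oper{E}_1$-algebra recover the algebra. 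Everything therefore comes down to showing that $u^*$ is an equivalence.

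The one non-formal input is the fact, established just before the statement, that $u\colon E_n\to B$ is a $K_n$-homology equivalence: by \ref{spectral sequence} the groups $(K_n)_*B$ are computed by Pirashvili's higher Hochschild homology $\HH{S^{d-1}\times\mathbb{R}}_{*,*}\big((K_n)_*E_n\big)$, and the étaleness of $(K_n)_*E_n$ collapses the spectral sequence to give $(K_n)_*B\cong(K_n)_*E_n$ with $u_*$ the identity. Two further standard facts will be used: $K_n$-homology equivalences of $E_n$-modules are stable under $(-)\otimes_{E_n}(-)$, so that $E_n=E_n^{\otimes_{E_n}p}\to B^{\otimes_{E_n}p}$ is a $K_n$-homology equivalence for every $p\geq 0$; and, since $E_n$ is $K_n$-local, the functor $\Map_{E_n}(-,E_n)$ sends $K_n$-homology equivalences of $E_n$-modules to equivalences of spectra.

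Next I would resolve $\R\Hom^{[0,1]}_B(E_n,E_n)$, which by \ref{equivalence} is a homotopy limit over the poset of disks in $[0,1]$, by the associated cobar model: a cosimplicial $E_n$-module whose $p$-th term is $\Map_{E_n}\!\big(B^{\otimes_{E_n}p}\otimes_{E_n}E_n,\,E_n\big)$, with coface and codegeneracy maps built from the multiplication of $B$ and its action on the module $E_n$. Applying the two facts above in each cosimplicial degree, the map induced by $u$ identifies this object term by term with the corresponding cobar object in which $B$ is replaced by $E_n$, namely the one computing $\R\Hom^{[0,1]}_{E_n}(E_n,E_n)$. Passing to homotopy limits, $u^*$ is an equivalence with target $E_n$; hence $\HC{\oper{E}_d}(E_n)\simeq E_n$, the asserted equivalence being this map.

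The step requiring the most care is the interface between the geometric construction $\R\Hom^{[0,1]}$ of \ref{equivalence} and this homological bookkeeping: one must check that the homotopy limit over the poset of disks genuinely computes the cobar construction written above, that the map of the statement is $u^*$ under this identification, and that the degreewise equivalences glue on homotopy limits. Granting the $K_n$-homology equivalence $u$ — whose proof, resting on the étaleness of $(K_n)_*E_n$, is the substantive ingredient and is carried out before the statement — the remainder is the formal principle that a $K_n$-homology equivalence of $\oper{E}_1$-coefficient algebras induces an equivalence on derived homomorphisms between modules with $K_n$-local target.
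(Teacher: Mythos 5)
Your proposal is correct and follows essentially the same route as the paper: identify $\HC{\oper{E}_d}(E_n)$ with $\R\Hom^{[0,1]}_{\int_{S^{d-1}\times\mathbb{R}}E_n}(E_n,E_n)$, resolve it by the (co)bar/end model whose terms are $\Hom(E_n\otimes B^{\otimes p},E_n)$, and use that $E_n\to B$ is a $K_n$-equivalence together with $K_n$-locality of $E_n$ to replace $B$ by $E_n$ degreewise. The paper phrases the degreewise replacement as an equivalence of functors $\mathscr{F}(E_n,B,E_n)\simeq\mathscr{F}(E_n,E_n,E_n)$ on $\Disk^{-+}$ rather than via the cosimplicial cobar object, but the content is identical; if anything you are slightly more explicit about the needed stability of $K_n$-equivalences under $\otimes_{E_n}$ and about identifying the map of the statement with $u^*$.
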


In the seventh section, we prove an \'etale base change theorem for \'etale algebras.

\begin{theo*}{(\ref{etale base change for Hochschild cohomology})}
Let $T$ be a commutative algebra in $\Mod_E$ that is ($K$-locally)  \'etale as an $\oper{E}_d$-algebras (more precisely, the $\oper{E}_d$-version of the cotangent complex of $E$ defined in \cite[Definition 2.7.]{francistangent} is ($K$-locally) contractible), then for any ($K$-local) $\oper{E}_d$-algebra $A$ over $T$, the base-change map 
\[\HC{\oper{E}_d}(A|E)\stackrel{\simeq}{\rightarrow}\HC{\oper{E}_d}(A|T)\]
is an equivalence.
\end{theo*}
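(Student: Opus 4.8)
The plan is to reduce the statement to a property of an idempotent map of enveloping $\oper{E}_1$-algebras. Write $U_E A:=\int^E_{S^{d-1}\times\mathbb{R}}A$ and $U_T A:=\int^T_{S^{d-1}\times\mathbb{R}}A$ for the enveloping $\oper{E}_1$-algebras of $A$ formed relative to $E$ and relative to $T$. By \ref{equivalence of the two definitions} and \ref{equivalence} we have
\[\HC{\oper{E}_d}(A|E)\simeq\R\Hom^{[0,1]}_{U_E A}(A,A),\qquad\HC{\oper{E}_d}(A|T)\simeq\R\Hom^{[0,1]}_{U_T A}(A,A).\]
Functoriality and symmetric monoidality of factorization homology turn the $\oper{E}_d$-map $T\to A$ into a map of $\oper{E}_1$-algebras $U_E T\to U_E A$; since $T$ is commutative, the collapse $S^{d-1}\times\mathbb{R}\to\ast$ gives an augmentation $U_E T\to T$; and base change for factorization homology along $E\to T$ identifies $U_T A\simeq U_E A\otimes_{U_E T}T$, so that the base-change map of the theorem is the canonical comparison attached to the resulting $\oper{E}_1$-map $\psi\colon U_E A\to U_T A$.

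It therefore suffices to prove that $\psi$ is idempotent, i.e. that the multiplication $U_T A\otimes_{U_E A}U_T A\to U_T A$ is an equivalence ($K$-locally): restriction of left modules along an idempotent map of $\oper{E}_1$-algebras is fully faithful, the left $U_E A$-module $A$ is the restriction of the left $U_T A$-module $A$, and hence the comparison $\R\Hom^{[0,1]}_{U_E A}(A,A)\to\R\Hom^{[0,1]}_{U_T A}(A,A)$ is an equivalence. Using $U_T A\simeq U_E A\otimes_{U_E T}T$ and the fact that $U_E T$ maps centrally to $U_E A$, one computes
\[U_T A\otimes_{U_E A}U_T A\simeq U_E A\otimes_{U_E T}\bigl(T\otimes_{U_E T}T\bigr),\]
reducing the claim to $T\otimes_{U_E T}T\simeq T$ ($K$-locally). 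By excision for factorization homology — gluing two copies of $D^d$ along $D^d\setminus\{0\}\cong S^{d-1}\times\mathbb{R}$ recovers $S^d$ — we have $T\otimes_{U_E T}T\simeq\int^E_{S^d}T$, so it is enough to show that $T\to\int^E_{S^d}T$ is a ($K$-homology) equivalence.

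This last point is the heart of the matter, and it is precisely where étaleness enters; the argument runs parallel to Section 6 (compare \ref{E_n hochschild cohomology}). By Francis' analysis of the $\oper{E}_d$-cotangent complex \cite{francistangent}, the reduced part $\mathrm{cofib}\bigl(T\to\int^E_{S^d}T\bigr)$ of $S^d$-factorization homology is controlled by $L^{\oper{E}_d}_{T/E}$, so its ($K$-local) contractibility yields $\int^E_{S^d}T\simeq T$ ($K$-locally); alternatively, $\oper{E}_d$-étaleness first gives $\mathrm{THH}_E(T)=\int^E_{S^1}T\simeq T$ ($K$-locally), after which an induction on $n\le d$ using the excision equivalence $\int^E_{S^n}T\simeq T\otimes_{\int^E_{S^{n-1}\times\mathbb{R}}T}T$ propagates this up to $\int^E_{S^d}T\simeq T$. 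Feeding this back gives $U_T A\otimes_{U_E A}U_T A\simeq U_E A\otimes_{U_E T}T\simeq U_T A$, so $\psi$ is idempotent and the base-change map is an equivalence. The delicate steps are, first, pinning down the exact relation between Francis' $\oper{E}_d$-cotangent complex and the $S^d$-factorization homology of the commutative algebra $T$ — reconciling its $\oper{E}_d$- and $\oper{E}_\infty$-structures — while carrying everything out $K$-locally, since the hypothesis only provides contractibility after $K$-localization; and second, the base-change identity $U_T A\simeq U_E A\otimes_{U_E T}T$, which, although morally forced by the symmetric monoidality of $V\mapsto\int^E_V A$, must be made precise within the operadic-module framework used here.
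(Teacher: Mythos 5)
Your overall strategy is sound and you reduce to the right kind of statement, but you take a genuinely different route from the paper, and your route carries one substantial unproven ingredient. The paper also passes to the two enveloping $\oper{E}_1$-algebras $U_EA=\int^E_{S^{d-1}\times\mathbb{R}}A$ and $U_TA$, but instead of your idempotency argument it shows directly that $\psi\colon U_EA\to U_TA$ is a $K$-local \emph{equivalence}: Francis' transitivity cofiber sequence $u_!L_T\to L_A\to L_{A|T}$ shows $L_A\to L_{A|T}$ is a $K$-local equivalence, and comparing the two cofiber sequences $\Sigma^{d-1}L\to\int_{S^{d-1}\times(0,1)}(-)\to A$ formed over $E$ and over $T$ then gives the claim; one concludes with a lemma asserting that restriction along a $K$-local equivalence of associative algebras induces an equivalence on derived Homs into $K$-local targets (this is where the hypothesis that $A$ is $K$-local enters --- your full-faithfulness step needs the same hypothesis, since your idempotency only holds $K$-locally). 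Your route instead rests on the base-change identity $U_TA\simeq U_EA\otimes_{U_ET}T$ together with the centrality of $U_ET\to U_EA$; these are plausible (and should be provable from the $\cat{D}(M)$-hocolim model, since relative tensor products are realizations of bar constructions and commute with the relevant coends), but they are exactly the ``delicate step'' you flag, they are nowhere established in the paper, and they constitute the real work in your approach. Two smaller points: $S^d$ is not parallelizable for general $d$, so $\int^E_{S^d}T$ only makes sense here because $T$ is commutative (where $\int_MT\simeq \on{Sing}(M)\otimes T$), and your excision step should be phrased accordingly; and the detour through $S^d$ is unnecessary --- étaleness says precisely that the unit $T\to U_ET$ is a (split) $K$-local equivalence, so $T\otimes_{U_ET}T$ is $K$-locally $T$ and, granting your base-change identity, $U_TA\simeq U_EA\otimes_{U_ET}T$ is already $K$-locally equivalent to $U_EA$, recovering the paper's stronger conclusion that $\psi$ is a $K$-local equivalence rather than merely idempotent.
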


In particular, this result combined with our computation implies that for any $K_n$-local $\oper{E}_d$-algebra $A$ over $E_n$, the map
\[\HC{\oper{E}_d}(A|E_n)\to\HC{\oper{E}_d}(A|\mathbb{S})\]
is a weak equivalence.

The full strength of the results proved in this paper is unnecessary in the case of $E_n$ since it is known to be a commutative ring spectrum. However, we think that the method presented here could be used in other contexts where one deals with $\oper{E}_d$-algebras that are not commutative.

\subsection*{Related work}

A geometric definition of higher Hochschild cohomology for commutative algebras is studied in \cite{ginothigher} and \cite{ginothigher2}. Our construction is a generalization to the  case of $\oper{E}_d$-algebras.

\subsection*{Acknowledgements}

This paper is part of the author's Ph.D. thesis. I wish to thank Haynes Miller, Clark Barwick, David Ayala, John Francis and Luis Alexandre Pereira for helpful conversations about the material of this paper.

\section*{Conventions}

We denote by $\S$ the category of simplicial sets with its usual model structure. We use boldface letters to denote categories. We use calligraphic letters like $\oper{A}$ to denote operads. All our categories and operads are enriched in $\S$. Note that given a topological operad or category, we can turn it into a simplically enriched operad or category by applying the functor $\on{Sing}$ to each mapping space. We allow ourselves to do this operation implicitly. 

We denote by $\Mod_E$ the simplicial category of modules over a commutative symmetric ring spectrum $E$. This category is symmetric monoidal for the relative tensor product over $E$. Moreover, it has two model structure : the positive model structure denoted $\Mod_E^+$ and the absolute model structure denoted $\Mod_E$. We refer the reader to the first section for more details. We often write $\cat{C}$ instead of $\Mod_E$ in the sections where the results do not depend a lot on the symmetric monoidal model category.

\section{Recollection on operads and factorization homology}

We recall a few notations. We denote by $\cat{Fin}$ the category whose objects are the nonnegative integers and with 
\[\cat{Fin}(m,n)=\cat{Set}(\{1,\ldots,m\},\{1,\ldots,n\})\]
We abuse notation and write $n$ for the finite set $\{1,\ldots,n\}$.

To an operad $\oper{M}$ with one color, we can assign its PROP $\cat{M}$. This is a category whose set of objects coincides with the set of objects of $\cat{Fin}$ and with
\[\cat{M}(m,n)=\bigsqcup_{f\in \cat{Fin}(m,n)}\prod_{i\in n}\oper{M}(f^{-1}(i))\]

Note that $\cat{Fin}$ is the PROP associated to the commutative operad. The construction of the associated PROP is a functor from operads to categories. In particular, the unique map $\oper{M}\to\oper{C}om$ induces a map $\cat{M}\to\cat{Fin}$.

An $\oper{M}$-algebra $A$ in a simplicially enriched symmetric monoidal category $\cat{C}$ induces a symmetric monoidal simplicial functor $\cat{M}\to\cat{C}$ that we will also denote by $A$.

Let $E$ be a commutative ring in symmetric spectra. We denote by $\Mod^+_E$ the category of modules over $E$ equipped with the positive model structure (constructed in \cite[Theorem III.3.2.]{schwedeuntitled} under the name projective positive stable model structure). The category $\Mod^+_E$ is a closed symmetric monoidal model category for the smash product over $E$ (denoted $-\otimes_E-$). It is also a simplicial model category. Moreover, the two structures are compatible in the sense that the tensor of simplicial sets and $E$-modules:
\[-\otimes-:\S\times\Mod^+_E\to\Mod^+_E\]
sending $(X,M)$ to $(E\wedge\Sigma_+^{\infty}X)\otimes_EM$ is a Quillen left bifunctor.

There is another model structure on $\Mod_E$ called the absolute model structure and that will be just denoted $\Mod_E$ (its construction can also be found in \cite[Thorem III.3.2.]{schwedeuntitled}). Its weak equivalences are the same as in the positive model structure but there are more cofibrations. In particular, the important fact for us is that the unit $E$ is cofibrant in the absolute model structure but not in the positive model structure. The model category $\Mod_E$ is also a closed symmetric monoidal simplicial model category. The advantage of the positive model structure is that the smash product is much better behaved. In particular, one can prove the following theorem which would be false for the absolute model structure.

\begin{theo}
The category $\Mod^+_E$ is a closed symmetric monoidal cofibrantly generated simplicial model category satisfying the following properties
\begin{itemize}
\item For any  operad $\oper{M}$ in $\S$, the category $\Mod^+_E[\oper{M}]$ of $\oper{M}$-algebras in $\Mod^+_E$ has a model category structure where weak equivalences and fibrations are created by the forgetful functors $\Mod^+_E[\oper{M}]\to(\Mod^+_E)^{\on{Col}(\oper{M})}$.

\item If $\alpha:\oper{M}\to\oper{N}$ is a is a map of operads, the adjunction
\[\alpha_!:\Mod^+_E[\oper{M}]\leftrightarrows\Mod^+_E[\oper{N}]:\alpha^*\]
is a Quillen adjunction which is a Quillen equivalence if $\alpha$ is a weak equivalence.

\item The forgetful functor $\Mod^+_E[\oper{M}]\to(\Mod_E)^{\on{Col}(\oper{M})}$ sends cofibrant objects to cofibrant objects.
\end{itemize}
\end{theo}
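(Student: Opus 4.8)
The plan is to obtain the three properties from the general machinery for transferring model structures along free--forgetful adjunctions; all the genuine content sits in one homotopical property of the positive smash product. That $\Mod^+_E$ is a closed symmetric monoidal cofibrantly generated simplicial model category is \cite[Theorem III.3.2.]{schwedeuntitled}; what must be added concerns the free $\oper{M}$-algebra functor $F_{\oper{M}}\colon\Mod^+_E\to\Mod^+_E[\oper{M}]$, left adjoint to the forgetful functor. Each of the three bullets reduces to the following fact, which is special to the positive model structure on symmetric spectra and is exactly why that structure is used: if $i\colon A\to A'$ is a cofibration of $\Mod^+_E$ with cofibrant source, then for every $n$ the iterated pushout--product $i^{\square n}$ is a $\Sigma_n$-cofibration whose action is free enough that the orbit functor $\oper{M}(n)\otimes_{\Sigma_n}(-)$ carries it to a cofibration of the \emph{absolute} model category $\Mod_E$, and to a trivial cofibration there when $i$ is moreover a weak equivalence. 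This statement --- the ``freely powered'' or ``cofibrant symmetric powers'' condition --- is the main obstacle; I would take it from \cite{schwedeuntitled} and from Harper's work rather than reprove it, and the remainder is bookkeeping around the standard free-algebra filtration.

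For the first bullet I would run the standard transfer argument. The category $\Mod^+_E[\oper{M}]$ is locally presentable and $F_{\oper{M}}$ is a left adjoint, so the small object argument applies to the sets $F_{\oper{M}}(I)$ and $F_{\oper{M}}(J)$, where $I$ and $J$ are generating cofibrations and trivial cofibrations of $\Mod^+_E$ chosen with cofibrant sources; it then suffices to see that every relative $F_{\oper{M}}(J)$-cell complex is an underlying weak equivalence. A pushout $F_{\oper{M}}(A')\amalg_{F_{\oper{M}}(A)}X$ along $F_{\oper{M}}(i)$ with $i\in J$ admits, on underlying $E$-modules, the standard filtration whose successive layers are pushouts along cofibrations built from $\oper{M}(n)\otimes_{\Sigma_n}Q^n_{n-1}(i)$, with $Q^n_{n-1}(i)$ the appropriate stage of the filtration of $i^{\square n}$ smashed with $X$; the fact recalled above makes each layer a trivial cofibration of $E$-modules, and transfinite composition finishes the argument. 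That the resulting structure is simplicial follows as usual, the cotensor of an $\oper{M}$-algebra by a simplicial set being computed in $\Mod^+_E$.

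For the second bullet, that $(\alpha_!,\alpha^*)$ is a Quillen adjunction is formal: $\alpha^*$ commutes strictly with the two forgetful functors to $\Mod^+_E$, and since weak equivalences and fibrations of algebras are created there, $\alpha^*$ preserves both. Assume now that $\alpha$ is a weak equivalence. Then $\alpha^*$ creates, hence reflects, weak equivalences, and it preserves them, so by the standard criterion $(\alpha_!,\alpha^*)$ is a Quillen equivalence provided the unit $C\to\alpha^*\alpha_!C$ is a weak equivalence for every cofibrant $\oper{M}$-algebra $C$. Writing $C$ as a retract of a cell algebra and inducting over its attaching maps, one reduces to comparing, on underlying objects, $\coprod_n\oper{M}(n)\otimes_{\Sigma_n}X^{\otimes n}$ with $\coprod_n\oper{N}(n)\otimes_{\Sigma_n}X^{\otimes n}$ for $X$ cofibrant; since $\oper{M}(n)\to\oper{N}(n)$ is a $\Sigma_n$-equivariant weak equivalence and the $\Sigma_n$-action on $X^{\otimes n}$ is free enough, this comparison is a weak equivalence, and the filtration of the previous paragraph carries it through the cells of $C$. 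This is the operadic rectification theorem, for which I would cite Harper, Spitzweck, and Pavlov--Scholbach.

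For the third bullet, note first that a cofibration of $\Mod^+_E$ is in particular a cofibration of $\Mod_E$, since the absolute structure has more cofibrations. The underlying $E$-module of the initial $\oper{M}$-algebra is $\oper{M}(0)\otimes E$, which is cofibrant in $\Mod_E$ because $E$ is absolutely cofibrant and $\Mod_E$ is a simplicial model category (so that tensoring a cofibrant object by any simplicial set preserves cofibrancy) --- but in general it is not cofibrant in $\Mod^+_E$ (already for $\oper{M}=\oper{E}_d$, where it is $E$ itself), which is precisely why the statement refers to the absolute structure. A cofibrant $\oper{M}$-algebra is a retract of a cell algebra obtained from this initial object by attaching free cells $F_{\oper{M}}(i)$, $i\in I$; by the filtration recalled above, each attaching step is on underlying objects a pushout along a map assembled from the cofibrations $\oper{M}(n)\otimes_{\Sigma_n}Q^n_{n-1}(i)$, which by the quoted fact are cofibrations of $\Mod_E$. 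Since absolute cofibrations are stable under pushout, transfinite composition and retract, the underlying $E$-module of a cofibrant algebra is absolutely cofibrant.
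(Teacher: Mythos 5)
Your proposal is correct and follows essentially the same route as the paper, whose entire proof is a citation to Pavlov--Scholbach (Theorems 3.4.1 and 3.4.3); your sketch is an accurate outline of the argument behind those results, resting on the same key inputs (the symmetrizability/flatness properties of pushout--product powers in the positive structure, the free-algebra filtration, and operadic rectification) that you rightly defer to Schwede, Harper, Spitzweck, and Pavlov--Scholbach. In particular you correctly identify why the third bullet must land in the absolute structure, namely that the initial algebra contains $E$, which is not positively cofibrant.
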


\begin{proof}
See \cite[Theorem 3.4.1. and 3.4.3.]{pavlovsymmetric}.
\end{proof}

\begin{rem}
All the operads, we consider in this work have a finite number of colors. The only kind of weak equivalences we will have to consider are maps that induce a bijection on the set of colors and induce weak equivalences on each space of operations.
\end{rem}

\subsection*{The little disk operad}

There is a topological category whose objects are $d$-manifolds without boundary and whose space of maps between $M$ and $N$ is $\Emb(M,N)$, the topological space of smooth embeddings with the weak $C^1$ topology. 

\begin{defi}
A \emph{framed $d$-manifold} is a pair $(M,\sigma_M)$ where $M$ is a $d$-manifold and $\sigma_M$ is a smooth section of the $\on{GL}(d)$-principal bundle $\on{Fr}(TM)$.
\end{defi}

If $M$ and $N$ are two framed $d$-manifolds, we define a space of framed embeddings denoted by $\Emb_f(M,N)$ as in \cite[Definition V.8.3.]{andrademanifolds}. We now recall this construction. First, given a diagram:
\[
\xymatrix{
 &Y\ar[d]^v\\
X\ar[r]_u& Z
}
\]
in the category of topological spaces over a fixed topological space $W$, we define its homotopy pullback as in \cite[V.9.]{andrademanifolds} to be the space of triples $(y,p,z)\in X\times Z^{[0,1]}\times Y$ such that $p(0)=u(x)$, $p(1)=v(y)$ and such that the image of $p$ in $W^{[0,1]}$ is a constant path. One can show that this is indeed a model for the homotopy pullback in the model category $\cat{Top}_{/W}$.

\begin{defi}\label{framed embedding}
Let $M$ and $N$ be two framed $d$-dimensional manifolds. The \emph{topological space of framed embeddings from $M$ to $N$}, denoted $\Emb_f(M,N)$, is given by the following homotopy pullback in the category of topological spaces over $\Map(M,N)$:
\[\xymatrix
{\Emb_f(M,N)\ar[r]\ar[d] & \Map(M,N)\ar[d]\\
\Emb(M,N)\ar[r] & \Map_{\on{GL}(d)}(\on{Fr}(TM),\on{Fr}(TN))}
\]

The right hand side map is obtained as the composition
\[\Map(M,N)\to\Map_{\on{GL}(d)}(M\times\on{GL}(d),N\times\on{GL}(d))\cong\Map_{\on{GL}(d)}(\on{Fr}(TM),\on{Fr}(TN))\]  
where the first map is obtained by taking the product with $\on{GL}(d)$ and the second map is induced by the identification $\on{Fr}(TM)\cong M\times\on{GL}(d)$ and $\on{Fr}(TN)\cong N\times \on{GL}(d)$ induced by our choice of framing on $M$ and $N$.
\end{defi}

Andrade explains in \cite[Definition V.10.1]{andrademanifolds} that there are well defined composition maps
\[\Emb_f(M,N)\times\Emb_f(N,P)\to\Emb_f(M,P)\]
allowing the construction of a topological category $f\Man_d$.

We denote by $D$ the open disk of dimension $d$.

\begin{prop}\label{framed embeddings of disks}
The evaluation at the center of the disks induces a weak equivalence
\[\Emb_f(D^{\sqcup p},M)\to\on{Conf}(p,M)\]
\end{prop}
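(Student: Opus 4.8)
The plan is to analyze the defining homotopy pullback square for $\Emb_f(D^{\sqcup p}, M)$ and reduce the claim to two classical facts: one about ordinary (non-framed) embeddings of disks, and one about the space of framings being contractible in the relevant sense. Concretely, the evaluation map $\Emb(D^{\sqcup p}, M) \to \on{Conf}(p, M)$ sending an embedding to the images of the $p$ centers is a weak equivalence; this is a standard result (it appears in the work on the little disks operad and manifold calculus, essentially because an embedding of a disk is, up to contractible choice, determined by a point together with a germ of framing, and shrinking the disks gives a deformation retraction onto the configuration). So the strategy is to show that in the defining square, the map I want is obtained by a base change that doesn't alter the homotopy type.

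First I would unwind \ref{framed embedding} in the case $M = D^{\sqcup p}$ as the source. Here $\on{Fr}(TD^{\sqcup p}) \cong D^{\sqcup p} \times \on{GL}(d)$ canonically via the standard framing, so $\Map_{\on{GL}(d)}(\on{Fr}(TD^{\sqcup p}), \on{Fr}(TN)) \cong \Map(D^{\sqcup p}, N \times \on{GL}(d)) \cong \Map(D^{\sqcup p}, N) \times \Map(D^{\sqcup p}, \on{GL}(d))$, and since $D$ is contractible each factor $\Map(D, N) \simeq N$ and $\Map(D, \on{GL}(d)) \simeq \on{GL}(d)$. Then I would observe that the homotopy pullback defining $\Emb_f(D^{\sqcup p}, M)$ can be computed in stages: first take the homotopy pullback of $\Emb(D^{\sqcup p}, M) \to \Map_{\on{GL}(d)}(\on{Fr}(TD^{\sqcup p}), \on{Fr}(TM)) \leftarrow \Map(D^{\sqcup p}, M)$, keeping track of the constraint that the path in $\Map(D^{\sqcup p}, M)$ be constant. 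Because $D^{\sqcup p}$ is a disjoint union of contractibles, all the mapping spaces out of it are equivalent (by evaluation at the centers) to their values on the $p$ points, so the whole square is weakly equivalent to the corresponding square with $D^{\sqcup p}$ replaced by $\{$p points$\}$, which is exactly the square whose top-left corner, by the non-framed statement and contractibility of the $\on{GL}(d)$-bundle data over a point, is $\on{Conf}(p, M)$.

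Then I would check that under these identifications the composite $\Emb_f(D^{\sqcup p}, M) \to \Emb(D^{\sqcup p}, M) \to \on{Conf}(p, M)$ agrees with the evaluation-at-centers map, which is a matter of chasing the definitions of the structure maps in \ref{framed embedding}. Alternatively, and perhaps more cleanly, I would argue directly: the left vertical map $\Emb_f(D^{\sqcup p}, M) \to \Emb(D^{\sqcup p}, M)$ is obtained by base change from $\Map(D^{\sqcup p}, M) \to \Map_{\on{GL}(d)}(\on{Fr}(TD^{\sqcup p}), \on{Fr}(TM))$; since $D$ is contractible this latter map is, up to equivalence, $N^p \to N^p \times (\on{GL}(d)^p \text{ path data})$ with contractible fibers, so the left vertical map is a weak equivalence, and composing with the known equivalence $\Emb(D^{\sqcup p}, M) \simeq \on{Conf}(p, M)$ gives the result.

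The main obstacle I expect is the bookkeeping around the ``constant path'' condition in Andrade's definition of the homotopy pullback over $W = \Map(M, N)$: one must verify that restricting to constant paths in $W$ does not destroy the homotopy-pullback property and that the fibers of the relevant comparison maps are genuinely contractible rather than merely connected. This amounts to showing that $\Map(D^{\sqcup p}, N) \to \Map_{\on{GL}(d)}(\on{Fr}(TD^{\sqcup p}), \on{Fr}(TN))$ has contractible homotopy fibers, which follows from the fact that a framing of $TN$ pulled back to a disk is unique up to a contractible space of choices (the space of sections of a bundle with contractible fibers over a contractible base, here essentially $\Map(D, \on{GL}(d)) \simeq \on{GL}(d)$, matched against the target factor). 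Once that contractibility is in hand, the rest is a formal two-out-of-three argument in the category of spaces over $\Map(M, N)$ together with the citation of the classical equivalence $\Emb(D^{\sqcup p}, M) \xrightarrow{\simeq} \on{Conf}(p, M)$.
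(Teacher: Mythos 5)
There is a genuine gap: the ``standard result'' you lean on --- that evaluation at the centers gives a weak equivalence $\Emb(D^{\sqcup p},M)\to\on{Conf}(p,M)$ for \emph{unframed} embeddings --- is false. The correct classical statement (which your own parenthetical ``a point together with a germ of framing'' actually hints at) is that taking the value \emph{and the derivative} at the centers gives a weak equivalence $\Emb(D^{\sqcup p},M)\simeq \on{Conf}(p,\on{Fr}(TM))$, so the map to $\on{Conf}(p,M)$ is a fibration up to homotopy with fiber $\on{GL}(d)^{p}$, not an equivalence. Your ``cleaner'' alternative fails for the dual reason: the right vertical map $\Map(D^{\sqcup p},M)\to\Map_{\on{GL}(d)}(\on{Fr}(TD^{\sqcup p}),\on{Fr}(TM))$ is, after trivializing by the framings, the inclusion $M^{p}\times\{e\}\hookrightarrow M^{p}\times\on{GL}(d)^{p}$, whose homotopy fibers are $(\Omega\,\on{GL}(d))^{p}$, not contractible (you appear to be conflating point-set fibers with homotopy fibers, or implicitly assuming $\on{GL}(d)$ is contractible). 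Consequently the left vertical map $\Emb_f(D^{\sqcup p},M)\to\Emb(D^{\sqcup p},M)$ is not a weak equivalence either. If both of your claims were true, the framed embedding space would carry no more information than the unframed one, which would defeat the purpose of Definition \ref{framed embedding}.

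The actual mechanism is that these two $\on{GL}(d)^{p}$ discrepancies cancel inside Andrade's homotopy pullback: unwinding the definition, a point of $\Emb_f(D^{\sqcup p},M)$ lying over a configuration $x\in\on{Conf}(p,M)$ amounts to a frame of $T_{x_i}M$ for each $i$ (the derivative of the embedding) together with a path in $\on{GL}(d)^{p}$ from that frame to the one prescribed by the framing $\sigma_M$ (the ``constant path in $W$'' condition forces the path to stay in the fiber), and this path-space datum is contractible. Contracting the paths and shrinking the disks then gives the asserted equivalence to $\on{Conf}(p,M)$. The paper itself does not spell this out and simply cites \cite[Proposition V.4.5.]{andrademanifolds} and \cite[Proposition 6.6.]{horelfactorization}, where this fiberwise cancellation is carried out; your writeup would need to be repaired along these lines rather than via the two incorrect equivalences.
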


\begin{proof}
See \cite[Proposition V.4.5.]{andrademanifolds} or \cite[Proposition 6.6.]{horelfactorization}.
\end{proof}

\begin{defi}
The \emph{little $d$-disks operad} $\oper{E}_d$ is the one-color operad whose $n$-th space is 
\[\oper{E}_d(n)=\Emb_f(D^{\sqcup n},D)\]
and whose composition is induced by composition of embeddings. We denote by $\cat{E}_d$ the PROP of the operad $\oper{E}_d$.
\end{defi}

\begin{rem}
This model of the little $d$-disk operad is introduced by Andrade in \cite{andrademanifolds}. Using \ref{framed embeddings of disks}, it is not hard to show that this definition is weakly equivalent to any other definition of the little $d$-disk operad.
\end{rem}

\subsection*{Factorization homology}

From now on and until we say otherwise, we denote by $(\cat{C}^+,\otimes,\un)$ the symmetric monoidal category $\Mod_E$ with its positive model structure and by $\cat{C}$ the same category equipped with the absolute model structure. We do this partly to simplify the notation bus mostly to emphasize that our arguments hold in greater generality modulo a few easy modifications.

\begin{defi}\label{factorization}
Let $A$ be a cofibrant object of $\cat{C}^+[\oper{E}_d]$. We define the \emph{factorization homology with coefficients in $A$} by the coend
\[\int_MA:=\Emb_f(-,M)\otimes_{\cat{E}_d}A\]
\end{defi}

This functor sends weak equivalences between cofibrant algebras to weak equivalences.

\begin{prop}
The functor $M\mapsto \int_MA$ is a simplicial and symmetric monoidal functor from the category $f\Man_d$ to the category $\cat{C}$.
\end{prop}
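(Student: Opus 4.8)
The plan is to exhibit $\int_MA$ as a composite of functors that are manifestly simplicial and symmetric monoidal, and then check that the coend construction preserves these structures. The key observation is that factorization homology is a \emph{left Kan extension}: $\int_{(-)}A$ is the left Kan extension of the functor $A\colon\cat{E}_d\to\cat{C}$ along the functor $\cat{E}_d\to f\Man_d$ sending the object $n$ to $D^{\sqcup n}$ and a PROP operation to the corresponding embedding (using that $\cat{E}_d(m,n)=\coprod_f\prod_i\Emb_f(D^{\sqcup f^{-1}(i)},D)\simeq\Emb_f(D^{\sqcup m},D^{\sqcup n})$, which is the content of \ref{framed embeddings of disks} applied componentwise). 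Since both source categories are simplicially enriched and the Kan extension is computed by the enriched coend $\Emb_f(-,M)\otimes_{\cat{E}_d}A$, the resulting functor $M\mapsto\int_MA$ is automatically simplicially enriched: a simplicial map $X\to\Emb_f(M,N)$ induces, after tensoring with the simplicial bimodule $\Emb_f(-,-)$ over $\cat{E}_d$ and then with $A$, a map of coends, and this assignment respects the simplicial structure on mapping spaces by functoriality of the coend in the simplicially enriched sense.

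For the symmetric monoidal statement, first I would record that $f\Man_d$ is symmetric monoidal under disjoint union $\sqcup$, with unit the empty manifold, and that $\Emb_f(-,-)$ is a \emph{lax} (in fact strong, on the relevant pieces) symmetric monoidal bimodule: there are natural maps $\Emb_f(M,N)\times\Emb_f(M',N')\to\Emb_f(M\sqcup M',N\sqcup N')$ coming from taking disjoint union of embeddings, and similarly $\cat{E}_d$ is a symmetric monoidal category with $\sqcup$ corresponding to $+$ on objects. The algebra $A\colon\cat{E}_d\to\cat{C}$ is symmetric monoidal by definition of an $\oper{E}_d$-algebra (as recalled in the excerpt, an $\oper{M}$-algebra induces a symmetric monoidal simplicial functor $\cat{M}\to\cat{C}$). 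Then I would invoke the general principle that a coend $F(-)\otimes_{\cat{D}}G$ of a symmetric monoidal left $\cat{D}$-module $F$ with a symmetric monoidal functor $G$, where $\cat{D}$ is symmetric monoidal, is itself symmetric monoidal in the $F$-variable: concretely, the map
\[\int_MA\otimes\int_NA=(\Emb_f(-,M)\otimes_{\cat{E}_d}A)\otimes(\Emb_f(-,N)\otimes_{\cat{E}_d}A)\to\Emb_f(-,M\sqcup N)\otimes_{\cat{E}_d}A=\int_{M\sqcup N}A\]
is built from the monoidality data of $\Emb_f$, the monoidality of $A$, and the symmetry isomorphisms of $\cat{C}$, using that $\otimes$ preserves coends in each variable. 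One checks this map is an isomorphism by a cofinality/reindexing argument: the relevant double coend over $\cat{E}_d\times\cat{E}_d$ collapses along the diagonal monoidal structure $\cat{E}_d\times\cat{E}_d\to\cat{E}_d$, and the colimit defining the tensor product is unchanged. Unitality ($\int_\emptyset A\cong\un$) and the coherence hexagon/pentagon axioms follow formally from the corresponding coherences for $f\Man_d$, $\cat{E}_d$, $A$, and $\cat{C}$.

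The main obstacle I expect is the isomorphism $\int_MA\otimes\int_NA\xrightarrow{\sim}\int_{M\sqcup N}A$, i.e.\ showing the monoidality map is actually invertible rather than merely a comparison map. This requires understanding the category of "configurations of little disks in $M\sqcup N$" and splitting it, up to the equivalences of \ref{framed embeddings of disks}, as configurations in $M$ together with configurations in $N$ — in other words, that $\Emb_f(D^{\sqcup p}, M\sqcup N)\simeq\coprod_{p=p_1+p_2}\Emb_f(D^{\sqcup p_1},M)\times\Emb_f(D^{\sqcup p_2},N)$ compatibly with the $\cat{E}_d$-action, which reduces to the analogous decomposition $\on{Conf}(p,M\sqcup N)\cong\coprod_{p_1+p_2=p}\binom{p}{p_1}\on{Conf}(p_1,M)\times\on{Conf}(p_2,N)$ of configuration spaces. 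Care is needed because the coend is a homotopy-theoretically meaningful colimit: one must ensure $A$ is cofibrant (as hypothesized) so that the point-set coend has the correct homotopy type, and that $\otimes$ commutes with the relevant colimits — here the fact that $\cat{C}^+$ is a \emph{closed} symmetric monoidal category, so $-\otimes X$ is a left adjoint, does the job. The simplicial enrichment claim, by contrast, is essentially formal once the Kan-extension description is in place.
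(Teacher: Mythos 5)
Your approach is essentially the one taken in the reference the paper points to: the paper itself gives no argument here beyond citing \cite[Definition 7.3 and the following paragraph]{horelfactorization}, and what is done there is exactly the formal coend/left-Kan-extension argument you describe. One point to tighten: for the structure map $\int_MA\otimes\int_NA\to\int_{M\sqcup N}A$ to be an \emph{isomorphism} of the point-set coends, you need the splitting
\[\Emb_f(D^{\sqcup p},M\sqcup N)\cong\coprod_{S\subseteq\{1,\ldots,p\}}\Emb_f(D^{\sqcup S},M)\times\Emb_f(D^{\sqcup (p\setminus S)},N)\]
to be an actual homeomorphism of right $\cat{E}_d\times\cat{E}_d$-modules, not merely a weak equivalence detected through the configuration-space comparison of \ref{framed embeddings of disks} as you suggest; a weak equivalence would not let you split a strict coend. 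Fortunately the strict splitting holds for an elementary reason — each disk is connected, so any (framed) embedding, any path of bundle maps, etc., entering the homotopy pullback of \ref{framed embedding} decomposes according to which of $M$, $N$ receives each disk — so your argument is repaired by replacing the appeal to \ref{framed embeddings of disks} with this observation. The rest (simplicial enrichment being formal from the enriched coend, the collapse of the double coend along $+:\cat{E}_d\times\cat{E}_d\to\cat{E}_d$ using that $A$ is a symmetric monoidal functor, and closedness of $\cat{C}$ so that $\otimes$ commutes with the colimits involved) is correct as stated.
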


\begin{proof}
See \cite[Definition 7.3. and following paragraph]{horelfactorization}.
\end{proof}

Let $M$ be an object of $ f\Man_d$. Let $\cat{D}(M)$ be the poset of subsets of $M$ that are diffeomorphic to a disjoint union of disks. Let us choose for each object $V$ of $\cat{D}(M)$ a framed diffeomorphism $V\cong D^{\sqcup n}$ for some uniquely determined $n$. Each inclusion $V\subset V'$ in $\cat{D}(M)$ induces a morphism $D^{\sqcup n}\to D^{\sqcup n'}$ in $\cat{E}_d$ by composing with the chosen parametrization. Therefore each choice of parametrization induces a functor $\cat{D}(M)\to\cat{E}_d$. Up to homotopy this choice is unique since the space of automorphisms of $D$ in $ \cat{E}_d$ is contractible. 

In the following we assume that we have one of these functors $\delta:\cat{D}(M)\to\cat{E}_d$. We fix a cofibrant algebra $A: \cat{E}_d\to\cat{C}$.

\begin{prop}\label{left}
We have:
\[\int_M A\simeq\on{hocolim}_{V\in\cat{D}(M)}A(\delta V)\]
\end{prop}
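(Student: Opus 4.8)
The plan is to identify the coend defining $\int_M A$ with the homotopy colimit of $A\circ\delta$ over $\cat{D}(M)$ by exhibiting $\cat{D}(M)$ (via $\delta$) as a homotopically cofinal way to probe the diagram $\Emb_f(-,M)\otimes_{\cat{E}_d}A$. First I would recall that the coend $\Emb_f(-,M)\otimes_{\cat{E}_d}A$ is, up to the correct cofibrancy hypotheses (which hold since $A$ is cofibrant as an algebra, hence objectwise cofibrant by the last bullet of the recollection theorem, and since $\cat{E}_d$ has the homotopy type of a cofibrant simplicial category), a model for the homotopy coend $\Emb_f(-,M)\otimes^{\L}_{\cat{E}_d}A$, i.e. the homotopy colimit of $A$ over the simplicial category $\cat{E}_d$ weighted by the presheaf $\Emb_f(-,M)$. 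Equivalently, this is $\on{hocolim}$ of $A$ over the homotopy Grothendieck construction (the ``manifold-indexed'' category) of pairs $(D^{\sqcup n},\text{a framed embedding }D^{\sqcup n}\hookrightarrow M)$, with the morphisms being framed embeddings between the disjoint unions of disks compatible with the maps to $M$. Call this category $\cat{Disk}(M)$.

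Next I would produce a functor $\cat{D}(M)\to\cat{Disk}(M)$: an object $V\in\cat{D}(M)$ together with its chosen parametrization $V\cong D^{\sqcup n}$ gives the framed embedding $D^{\sqcup n}\cong V\hookrightarrow M$, and an inclusion $V\subset V'$ gives the required morphism in $\cat{Disk}(M)$ exactly as in the construction of $\delta$. The composite $\cat{D}(M)\to\cat{Disk}(M)\to\cat{C}$ is $A\circ\delta$, so it suffices to show this functor is homotopy cofinal, i.e. that for every object $(D^{\sqcup n}\hookrightarrow M)$ of $\cat{Disk}(M)$ the comma category (more precisely its nerve, taking the simplicial enrichment into account via a bar-construction/Bousfield--Kan style comma space) is weakly contractible. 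The key geometric input is Proposition \ref{framed embeddings of disks}: the space of framed embeddings $\Emb_f(D^{\sqcup n},M)$ is equivalent to $\on{Conf}(n,M)$, and the poset of disjoint unions of disks in $M$ containing (a configuration equivalent to) a given one, partially ordered by inclusion, is filtered and in fact contractible — one can always shrink any tuple of disks and any two tuples are jointly contained in a larger one after isotopy. I would make this precise by a standard ``shrinking'' argument: the relevant comma space deformation retracts onto the subspace where the disks are small and centered near a fixed configuration, which is contractible.

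The main obstacle I expect is the bookkeeping needed to pass rigorously between the strict coend in Definition \ref{factorization} and a genuine homotopy colimit over an honest (discrete-ish) poset, while keeping track of the simplicial enrichment: $\cat{E}_d$ and $\cat{Disk}(M)$ are topologically/simplicially enriched, so ``cofinality'' must be taken in the enriched sense, and one must check that the bar construction computing the coend is already homotopy invariant (this is where the positive model structure and the objectwise-cofibrancy of $A$ get used, to avoid having to cofibrantly replace the weight diagram). A clean way to finesse this is to factor the comparison through the Grothendieck-construction description of weighted homotopy colimits — $\on{hocolim}^{W}F\simeq \on{hocolim}_{\int W} F$ — reducing everything to an ordinary (unweighted) cofinality statement for $\cat{D}(M)\to\cat{Disk}(M)$, and then to invoke Proposition \ref{framed embeddings of disks} plus the elementary contractibility of the poset of ``disk-refinements'' of a configuration. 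Once cofinality is established, the proposition follows immediately, and I would remark that this is essentially the statement already contained in \cite[Definition 7.3.]{horelfactorization}, so the argument can be kept brief.
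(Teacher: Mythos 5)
Your strategy is sound and rests on the same two ingredients as the actual argument, but it is organized differently from the proof the paper points to. The paper simply defers to \cite[Corollary 7.7.]{horelfactorization}, and the route taken there (visible from the other citations in this paper, e.g.\ the use of \cite[Lemma 7.9.]{horelfactorization} and \cite[Proposition 2.8.]{horelfactorization}) is to work entirely on the side of the \emph{weight}: one shows that the canonical map $\on{hocolim}_{V\in\cat{D}(M)}\Emb_f(-,\delta V)\to\Emb_f(-,M)$ is a weak equivalence of right $\oper{E}_d$-modules, then uses that for $A$ cofibrant the functor $-\otimes_{\cat{E}_d}A$ preserves all weak equivalences of right modules and commutes with homotopy colimits, and finally identifies $\Emb_f(-,\delta V)\otimes_{\cat{E}_d}A\simeq A(\delta V)$ by the (co)Yoneda lemma, since $\Emb_f(-,D^{\sqcup n})$ is the representable functor of the PROP $\cat{E}_d$. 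You instead unstraighten the weighted colimit to a Grothendieck construction $\cat{Disk}(M)$ and prove enriched cofinality of $\cat{D}(M)\to\cat{Disk}(M)$. The two formulations are equivalent in content --- evaluating the weight-equivalence at $D^{\sqcup n}$ and taking homotopy fibers over points of $\Emb_f(D^{\sqcup n},M)$ recovers exactly your comma-space contractibility --- but the weight-decomposition route buys you precisely what you list as your ``main obstacle'': it never requires a notion of enriched cofinality, only homotopy invariance of the bar-resolved coend, which is supplied by the cofibrancy of $A$.

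One concrete misstep: your claim that the poset of elements of $\cat{D}(M)$ containing a given configuration (or a given embedded $D^{\sqcup n}$) is \emph{filtered} is false. Two such opens $V,V'$ need not admit a common upper bound in $\cat{D}(M)$: their union need not be, nor be contained in, a disjoint union of disks (take two disks covering an annulus). Filteredness is not what is needed and should be dropped; the correct statement is only that the relevant comma space is weakly contractible, and that must be proved by the shrinking/isotopy argument you sketch (or by citing \ref{framed embeddings of disks} together with the contractibility of the poset of disk-neighbourhoods of a configuration, as in the Seifert--van Kampen-type arguments of the references). With that correction, and with the cofinality taken in the appropriate homotopical sense, your argument goes through.
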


\begin{proof}
See \cite[Corollary 7.7.]{horelfactorization}
\end{proof}

\section{Modules over $\oper{E}_d$-algebras}

We define the notion of an $S_\tau$-shaped module. These are modules over $\oper{E}_d$-algebras that are studied in details in \cite{horeloperads}.

\begin{defi}
A \emph{$d$-framing} of a closed $(d-1)$-manifold $S$ is a trivialization of the $d$-dimensional bundle $TS\oplus\mathbb{R}$ where $\mathbb{R}$ is a trivial line bundle. 
\end{defi}

For $M$ a $d$-manifold with boundary and $m$ a point of $\partial M$, we say that a vector $u\in T_mM$ is pointing inward if it is not in $T_m\partial M$ and if there is a curve $\gamma:[0,1)\to M$ whose derivative at $0$ is $u$.

\begin{defi}
Let $S$ be a closed $(d-1)$-manifold. An \emph{$S$-manifold} is a $d$-manifold with boundary $M$ together with the data of 
\begin{itemize}
\item a diffeomorphism $f:S\to\partial M$.
\item a non-vanishing section $\phi$ of the restriction of the vector bundle $TM$ on $\partial M$ which is such that $\phi(m)$ is pointing inward for any $m$ in $\partial M$.
\end{itemize}
\end{defi}

\begin{defi}\label{Stau manifolds}
Let $\tau$ be a $d$-framing of $S$. A \emph{framed $S_\tau$-manifold} is an $S$-manifold $(M,f,\phi)$ with the data of a framing of $TM$ such that the following composition sends $\tau$ to the given framing on the right-hand side.
\[TS\oplus\mathbb{R}\stackrel{Tf\oplus\mathbb{R}}{\longrightarrow}T(\partial M)\oplus\mathbb{R}\stackrel{i\oplus\phi}{\longrightarrow}TM_{|\partial M}\]
where the map $i$ is the obvious inclusion $T\partial M\to TM_{|\partial M}$.
\end{defi}

For $E\to M$ a $d$-dimensional vector bundle, we denote by $\on{Fr}(E)$ the $\on{GL}(d)$ bundle over $M$ whose fiber over $m$ is the space of basis of the vector space $E_m$. Note that a trivialization of $E$ is exactly the data of a section of $\on{Fr}(E)$.

For $(M,f,\phi)$ and $(M,g,\psi)$ two framed $S_\tau$-manifolds, we denote by $\Map^{S_\tau}_{\on{GL}(d)}(\on{Fr}(TM),\on{Fr}(TN))$ the space of morphisms of $\on{GL}(d)$-bundles whose underlying map $M\to N$ sends the boundary to the boundary and whose restriction to the boundary is fiberwise the identity (via the identification of both boundaries with $S$ and of both tangent bundles with $TS\oplus\mathbb{R}$).

\begin{defi}
Let $(M,f,\phi)$ and $(M,g,\psi)$ be two framed $S_\tau$-manifolds. The \emph{topological space of framed embeddings from $M$ to $N$}, denoted $\Emb_f^{S_\tau}(M,N)$, is the following homotopy pullback taken in the category of topological spaces over $\Map^S(M,N)$:
\[\xymatrix
{\Emb_f^{S_\tau}(M,N)\ar[r]\ar[d] & \Map^S(M,N)\ar[d]\\
\Emb^S(M,N)\ar[r] & \Map^{S_\tau}_{\on{GL}(d)}(\on{Fr}(TM),\on{Fr}(TN))}
\]
Any time we use the $S$ superscript, we mean that we are considering the subspace of maps commuting with the given map from $S$. 
\end{defi}

Recall that a right module over an operad $\oper{M}$ is an $\S$-enriched functor $\cat{M}\op\to\cat{S}$. We denote by $\Mod_{\oper{M}}$ the category of right modules over $\oper{M}$.

\begin{defi}\label{defi-Stau rigth modules}
Let $(S,\tau)$ be a $d$-framed  $(d-1)$-manifold. We define a right $\oper{E}_d$-module $S_\tau$ by the formula
\[S_\tau(n)=\Emb_f^{S_\tau}(D^{\sqcup n}\sqcup (S\times[0,1)),S\times[0,1))\]
\end{defi}

Recall, that there is a symmetric monoidal structure on $\Mod_{\oper{E}_d}$. If $F$ and $G$ are two objects of $\Mod_{\oper{E}_d}$, then their tensor product is the left Kan extension of the functor
\[(n,m)\mapsto F(n)\times G(m)\]
along the functor $\cat{Fin}\times\cat{Fin}\to\cat{Fin}$ sending a pair of finite sets to their disjoint union.

\begin{cons}
We give $S_\tau$ the structure of an associative algebra in $\Mod_{\oper{E}_d}$

Let $\phi$ be an element of $S_\tau(m)$ and $\psi$ be an element of $S_\tau(n)$. Let $\psi^S$ be the restriction of $\psi$ to $S\times[0,1)$. We define $\psi\square \phi$ to be the element of $S_\tau(m+n)$ whose restriction to $S\times[0,1)\sqcup D^{\sqcup m}$ is $\psi^S\circ\phi$ and whose restriction to $D^{\sqcup n}$ is $\psi_{|D^{\sqcup n}}$. 
\end{cons}

The operation $\square$
\[-\square-:S_\tau(n)\times S_\tau(m)\to S_\tau(n+m)\]
makes $S_\tau$ into an associative algebra in the symmetric monoidal category of right $\oper{E}_d$-modules.

\begin{defi}\label{def of Stau modules}
The colored operad $S_\tau\oMod$ has two colors $a$ and $m$. Its only non-empty spaces of operations are
\begin{align*}
S_\tau\oMod(a,\ldots,a;a)&=\oper{E}_d(n)\\
S_\tau\oMod(a,\ldots,a,m;m)&=S_\tau(n)
\end{align*}
where the $n$ on the right hand side is the number of $a$'s before the semicolon. The composition involves the operad structure on $\oper{E}_d$, the right $\oper{E}_d$-module structure on $S_\tau$ and the associative algebra structure on $S_\tau$.
\end{defi}

Again $(\cat{C}^+,\otimes,\un)$ denotes the symmetric monoidal model category $\Mod^+_E$ and $\cat{C}$ denotes the same category but with its absolute model structure. An algebra in $\cat{C}^+$ over $S_\tau\oMod$ consists of a pair of objects $(A,M)$ where $A$ is an $\oper{E}_d$-algebra and $M$ is equipped with an action of $A$ of the form
\[\Emb_f^{S_\tau}(S\times[0,1)\sqcup D^{\sqcup n},S\times[0,1))\otimes M\otimes A^{\otimes n}\to M\]

\begin{defi}
Let $A$ be an $\oper{E}_d$-algebra in $\cat{C}$. We define the \emph{category of $S_\tau$-shaped modules} over $A$ denoted $S_\tau\Mod_A$ to be the category whose objects are $S_\tau\oMod$-algebras whose restriction to the color $a$ is the $\oper{E}_d$-algebra $A$ and whose morphisms are morphisms of $S_\tau\oMod$-algebra inducing the identity map on $A$.
\end{defi}

\begin{rem}\label{general module}
More generally for any operad $\oper{O}$. The above construction gives a notion of modules over $\oper{O}$-algebras for third section of \cite{horeloperads}.
\end{rem}

\begin{prop}
Let $A$ be an $\oper{E}_d$-algebra in $\cat{C}$. The coend:
\[U_A^{S_\tau}=S_\tau\otimes_{\cat{E}_d}A\]
inherits an associative algebra structure from the one on $S_\tau$ and there is an equivalence of categories between the category of left modules over $U_A^{S_\tau}$ and the category $S_\tau\Mod_A$.
\end{prop}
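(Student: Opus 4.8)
The strategy is to identify both categories with algebras over a colored operad and then use the free–forgetful structure together with the coend formula for $U_A^{S_\tau}$.

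First I would recall that $S_\tau$ is an associative algebra in the symmetric monoidal category $\Mod_{\oper{E}_d}$. Applying the coend $- \otimes_{\cat{E}_d} A$ is a symmetric monoidal functor from $\Mod_{\oper{E}_d}$ (with the tensor product described above) to $\cat{C}$: indeed, one checks that for right $\oper{E}_d$-modules $F,G$ one has $(F \otimes G) \otimes_{\cat{E}_d} A \cong (F \otimes_{\cat{E}_d} A) \otimes (G \otimes_{\cat{E}_d} A)$, since $A$ is an $\oper{E}_d$-algebra and the tensor product of right modules was defined by left Kan extension along disjoint union; this is a standard manipulation with coends and the fact that $A \colon \cat{E}_d \to \cat{C}$ is symmetric monoidal. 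Because symmetric monoidal functors carry associative algebras to associative algebras, $U_A^{S_\tau} = S_\tau \otimes_{\cat{E}_d} A$ inherits an associative algebra structure from $\square$. This handles the first assertion.

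For the equivalence of categories, I would unwind the definition of the colored operad $S_\tau\oMod$ from \ref{def of Stau modules}. An $S_\tau\oMod$-algebra with underlying $\oper{E}_d$-algebra on the color $a$ equal to $A$ is exactly an object $M$ of $\cat{C}$ together with structure maps
\[
\Emb_f^{S_\tau}(S \times [0,1) \sqcup D^{\sqcup n}, S \times [0,1)) \otimes M \otimes A^{\otimes n} \longrightarrow M
\]
for all $n$, compatible with the operadic composition (which, per the remark after \ref{def of Stau modules}, uses the operad structure on $\oper{E}_d$, the right module structure on $S_\tau$, and the associative algebra structure on $S_\tau$). Collecting the $A^{\otimes n}$'s via the coend, these maps assemble to a single map $U_A^{S_\tau} \otimes M \to M$, and the associativity/unitality constraints on the $S_\tau\oMod$-algebra translate precisely into the axioms of a left module over the associative algebra $U_A^{S_\tau}$. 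Concretely, one should phrase this as: the categories $S_\tau\Mod_A$ and $\Mod_{U_A^{S_\tau}}$ are both categories of algebras over colored operads, and there is a morphism of operads relating them — or more directly, that the monad on $\cat{C}$ whose algebras are $S_\tau$-shaped modules over $A$ agrees with the monad $U_A^{S_\tau} \otimes -$. The comparison is essentially the same as the classical fact identifying operadic modules over an associative algebra with honest bimodules via the enveloping algebra; here it is the "one-sided" version adapted to the right-module $S_\tau$.

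The main obstacle I anticipate is verifying cleanly that the coend $- \otimes_{\cat{E}_d} A$ is symmetric monoidal and that it interacts correctly with the associative algebra structure $\square$ on $S_\tau$ — in particular checking that the square product, which mixes the module action of $A$ (through $\psi^S \circ \phi$) with a "disjoint" copy of the disks, goes over to the multiplication on $U_A^{S_\tau}$. This is a diagram chase with coends over $\cat{E}_d$ that is routine but fiddly. Everything else — the unwinding of the colored operad and the resulting module axioms — is formal. I would likely simply cite \cite{horeloperads}, where, as noted in \ref{general module}, this kind of statement (the enveloping algebra of an operadic module category) is established in the relevant generality, and content myself with sketching the identification above.
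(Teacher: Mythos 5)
Your proposal is correct and matches the paper, which simply defers to \cite[Proposition 3.9]{horeloperads} for this statement; your sketch (monoidality of $-\otimes_{\cat{E}_d}A$ transporting the $\square$-product, plus the identification of the free $S_\tau$-shaped module monad with $U_A^{S_\tau}\otimes -$) is exactly the standard enveloping-algebra argument carried out there. Nothing further is needed.
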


\begin{proof}
See \cite[Proposition 3.9.]{horeloperads}.
\end{proof}

The previous proposition lets us put a model structure on $S_{\tau}\Mod_A$ in which the weak equivalences and fibrations are the maps that are sent to weak equivalences and fibrations by the forgetful functor $S_{\tau}\Mod_A\to\cat{C}$. Moreover, since $\cat{C}$ is a closed symmetric model category, the model category $S_{\tau}\Mod_A$ is a $\cat{C}$-enriched model category. 

\begin{exa}
The unit sphere inclusion $S^{d-1}\to\mathbb{R}^d$ has a trivial normal bundle. This induces a $d$-framing on $S^{d-1}$ which we denote $\kappa$. 
On the other hand we have the notion of an operadic module over an $\oper{E}_d$-algebra $A$. This is an object $M$ of $\cat{C}$ with multiplications maps
\[\oper{E}_d(n+1)\to\Map_{\cat{C}}(A^{\otimes n}\otimes M,M)\]
which are compatible with the $\oper{E}_d$-structure on $A$ in a suitable way (see \cite[Definition 1.1]{bergerderived}). We denote the category of such modules by $\Mod_A^{\oper{E}_d}$. The two notions are related by the following theorem.
\end{exa}

\begin{theo}\label{equivalence of modules}
For a cofibrant $\oper{E}_d$-algebra $A$, there is a Quillen equivalence
\[S_{\kappa}\Mod_A\leftrightarrows \Mod_A^{\oper{E}_d}\]
Moreover, the right adjoint of this equivalence commutes with the forgetful functor of both categories to $\cat{C}$.
\end{theo}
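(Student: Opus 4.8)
The plan is to produce the equivalence by exhibiting both sides as categories of left modules over equivalent associative algebras in $\cat{C}$, and then to compare. By the proposition preceding the statement, $S_\kappa\Mod_A$ is equivalent to the category of left modules over the associative algebra $U_A^{S_\kappa}=S_\kappa\otimes_{\cat{E}_d}A$. On the other side, the category $\Mod_A^{\oper{E}_d}$ of operadic $\oper{E}_d$-modules is, by the theory recalled in the example (see \cite{bergerderived}), also the category of algebras over a two-colored operad $\oper{E}_d\oMod$ with operation spaces $\oper{E}_d\oMod(a,\ldots,a;a)=\oper{E}_d(n)$ and $\oper{E}_d\oMod(a,\ldots,a,m;m)=\oper{E}_d(n+1)$; by the same coend argument as in the preceding proposition, this is the category of left modules over the associative algebra $\oper{E}_d(-+1)\otimes_{\cat{E}_d}A$ in $\cat{C}$, where $\oper{E}_d(-+1)$ denotes the right $\oper{E}_d$-module $n\mapsto\oper{E}_d(n+1)$ with its associative algebra structure given by the operadic composition that inserts one disk into another. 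So the statement reduces to: the map of associative algebras
\[
\oper{E}_d(-+1)\otimes_{\cat{E}_d}A\longrightarrow S_\kappa\otimes_{\cat{E}_d}A
\]
induced by a suitable map of right $\oper{E}_d$-modules $\oper{E}_d(-+1)\to S_\kappa$ is a weak equivalence (for cofibrant $A$), compatibly with the associative structures and with the forgetful functors to $\cat{C}$.

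First I would construct the map of right $\oper{E}_d$-modules. The space $\oper{E}_d(n+1)=\Emb_f(D^{\sqcup(n+1)},D)$ can be reinterpreted: singling out the last disk, an element is an embedding of $n$ "bystander" disks plus one distinguished disk into $D$, and one can enlarge the distinguished disk to a collar $S^{d-1}\times[0,1)\cong D\setminus\{0\}$ (the open disk minus its center, which is a collar on $S^{d-1}$ with the framing $\kappa$), producing an element of $S_\kappa(n)=\Emb_f^{S_\kappa}(D^{\sqcup n}\sqcup(S^{d-1}\times[0,1)),S^{d-1}\times[0,1))$. One must check this assignment is continuous, respects the right $\oper{E}_d$-action, and is a map of associative algebras — i.e. intertwines the operadic self-composition of $\oper{E}_d(-+1)$ with the $\square$-product on $S_\kappa$. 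I expect this to be a straightforward but slightly fiddly unwinding of definitions, using that removing a center and enlarging a collar commutes with the relevant compositions up to the coherent framings.

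Next I would show this map is a weak equivalence of right $\oper{E}_d$-modules. Evaluating on $n$, the left side $\oper{E}_d(n+1)$ is weakly equivalent by \ref{framed embeddings of disks} to $\on{Conf}(n+1,D)$, and the right side $S_\kappa(n)$ should likewise be weakly equivalent to $\on{Conf}(n,S^{d-1}\times[0,1))$ by the analogue of \ref{framed embeddings of disks} for $S_\tau$-manifolds (proved in \cite{horeloperads}); since $S^{d-1}\times[0,1)\simeq S^{d-1}\times\mathbb{R}\simeq D\setminus\{0\}$ and $\on{Conf}(n+1,D)\to\on{Conf}(n,D\setminus\{0\})$ (fixing one point at the origin, or equivalently projecting away the distinguished point and remembering the others avoid it) is a weak equivalence — this is a standard fibration-sequence argument for configuration spaces, the Fadell–Neuwirth fibration — the two are weakly equivalent, and one checks the comparison map realizes this equivalence.

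Finally, since $A$ is cofibrant, the coend $-\otimes_{\cat{E}_d}A$ is a homotopy coend (it can be computed, after \ref{left}, as a homotopy colimit over $\cat{D}(M)$-type diagrams, or directly via the bar construction), hence sends the weak equivalence of right $\oper{E}_d$-modules to a weak equivalence of associative algebras in $\cat{C}$; this gives a zig-zag of Quillen equivalences between the two module categories, and since at each stage the right adjoints are restriction-of-scalars along algebra maps, they commute with the forgetful functors to $\cat{C}$ on the nose. The main obstacle I anticipate is the first step: carefully setting up the map $\oper{E}_d(-+1)\to S_\kappa$ as a \emph{map of associative algebras in right $\oper{E}_d$-modules}, since the two algebra structures are defined by geometrically different-looking operations (disk insertion versus the $\square$-operation with its collar bookkeeping), and one must pin down the framings so that the square commutes strictly rather than merely up to homotopy. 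Everything after that is either a known configuration-space computation or a formal homotopy-coend argument.
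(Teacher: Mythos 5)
Your proposal follows essentially the same route as the paper, which simply cites \cite[Proposition 4.12]{horeloperads} and records that the equivalence is induced by a weak equivalence of associative algebras $U_A^{S^{d-1}_\kappa}\to U_A^{\oper{E}_d[1]}$, so that the right adjoint is restriction of scalars and visibly commutes with the forgetful functors, exactly as in your last step. The one wrinkle is that your comparison map runs from $\oper{E}_d(-+1)$ to $S_\kappa$, where the boundary condition in the definition of $S_\kappa(n)$ makes the point-set construction awkward ($D\setminus\{0\}$ has empty boundary, unlike $S^{d-1}\times[0,1)$); the map in the cited reference goes the other way, capping off the collar $S^{d-1}\times[0,1)$ with a closed disk and extending embeddings by the identity there, which sidesteps the fiddliness you anticipate while the configuration-space comparison is unchanged.
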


\begin{proof}
This is done in \cite[Proposition 4.12]{horeloperads}. The second claim follows from the fact that this equivalence is induced by a weak equivalence of associative algebra
\[U_A^{S^{d-1}_\kappa}\to U_A^{\oper{E}_d[1]}\]
where $U_A^{\oper{E}_d[1]}$ is the enveloping algebra of $\Mod_A^{\oper{E}_d}$ (i.e. it is an associative algebra such that there is an equivalence of categories $\Mod_{U_A^{\oper{E}_d[1]}}\simeq \Mod_A^{\oper{E}_d}$).
\end{proof}

Let $S$ be a closed $(d-1)$-manifold and let $\tau$ be a $d$-framing of $S$. There is a map $S_{\tau}\to\Emb_f(-,S\times(0,1))$ which sends an embedding $S\times[0,1)\sqcup D^{\sqcup n}\to S\times[0,1)$ to its restriction to $D^{\sqcup n}$. 

\begin{prop}
The map $S_\tau\to \Emb_f(-,S\times(0,1))$ is a weak equivalence of right $\oper{E}_d$-modules
\end{prop}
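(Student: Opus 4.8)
The plan is to show that the map $S_\tau(n) \to \Emb_f(D^{\sqcup n}, S\times(0,1))$ is a weak equivalence for each $n$, since a map of right $\oper{E}_d$-modules is a weak equivalence precisely when it is a weak equivalence on each object $n$ of $\cat{E}_d$. Both sides fit into homotopy pullback squares over the appropriate mapping spaces (on the left, the defining square of $\Emb_f^{S_\tau}$ from Definition~\ref{framed embedding} and its $S_\tau$-analogue; on the right, the defining square of $\Emb_f$), so I would first compare these squares and reduce to comparing the underlying spaces of smooth (unframed) embeddings together with the relevant $\on{GL}(d)$-bundle mapping spaces. The cleanest route is: compose with the weak equivalence $\Emb_f(D^{\sqcup n}, S\times(0,1)) \to \on{Conf}(n, S\times(0,1))$ of Proposition~\ref{framed embeddings of disks}, and prove directly that the composite $S_\tau(n) \to \on{Conf}(n, S\times(0,1))$ given by evaluating at the centers of the $n$ disks is a weak equivalence.

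First I would unwind the source: an element of $S_\tau(n)$ is a framed embedding $S\times[0,1) \sqcup D^{\sqcup n} \to S\times[0,1)$ restricting to the (framed) identity-up-to-homotopy on the collar $S\times[0,1)$, which amounts to a framed embedding of $D^{\sqcup n}$ into the complement of (a neighborhood of) $S\times\{0\}$ together with contractible extra data controlling how it interacts with the collar. The key geometric input is that the collar $S\times[0,1)$ can be isotoped into an arbitrarily thin neighborhood of the boundary, so the space of such collar embeddings rel the boundary identification is contractible; this is essentially the standard collar-uniqueness theorem. Granting that, the space $S_\tau(n)$ deformation retracts onto the space of framed embeddings $D^{\sqcup n} \to S\times(0,1)$, compatibly with evaluation at centers, and the claim follows from Proposition~\ref{framed embeddings of disks} applied to the open manifold $S\times(0,1)$.

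Concretely the steps are: (1) identify the map of homotopy-pullback squares and reduce the $S_\tau$-decoration to the underlying-embedding statement, using that the $\on{GL}(d)$-bundle mapping spaces match up because the framings of $S\times[0,1)$ and $S\times(0,1)$ both restrict from the given $d$-framing $\tau$ on $S$; (2) show the restriction map from embeddings $S\times[0,1)\sqcup D^{\sqcup n}\to S\times[0,1)$ (rel the collar) down to embeddings $D^{\sqcup n}\to S\times(0,1)$ is a weak equivalence, by exhibiting a section and a fiberwise-contractible-fibers argument, i.e. the space of collar embeddings extending a given disk embedding and agreeing with the identity near $\partial$ is contractible; (3) combine with Proposition~\ref{framed embeddings of disks} and $2$-out-of-$3$.

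The main obstacle is step~(2): making precise and rigorous the claim that the collar data contributes nothing homotopically. This requires a careful application of the isotopy extension theorem and uniqueness of collars (in the framed, weak $C^1$-topology setting used here, following Andrade), to build a deformation retraction of the whole embedding space onto the subspace where the collar is a fixed standard one; once the collar is standardized, the remaining data is exactly a framed embedding of $D^{\sqcup n}$ into $S\times(0,1)$. I expect that, as is typical in this area, the paper handles this by citing the corresponding results in \cite{andrademanifolds} or \cite{horeloperads} rather than reproving the collar theorem from scratch.
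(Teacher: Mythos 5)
Your proposal supplies an actual geometric argument, whereas the paper's proof of this proposition is a one-line citation to \cite[Proposition A.3.]{horeloperads} --- exactly as you predicted in your last paragraph. So there is nothing in the paper to compare against step by step; what I can say is that your outline matches the standard proof of the cited result: check the map objectwise, compare the two homotopy-pullback squares defining $\Emb_f^{S_\tau}$ and $\Emb_f$ corner by corner (the mapping-space and $\on{GL}(d)$-bundle corners are handled by the contractibility of maps out of $S\times[0,1)$ rel $S\times\{0\}$, the framings matching because both are induced by $\tau$), and reduce the embedding-space corner to collar uniqueness plus \ref{framed embeddings of disks}. That is the right skeleton, and identifying collar uniqueness as the essential input is the main point.

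One technical wrinkle in your step (2) deserves flagging: the restriction map $\Emb^S(S\times[0,1)\sqcup D^{\sqcup n},S\times[0,1))\to\Emb(D^{\sqcup n},S\times(0,1))$ is \emph{not} surjective, because the image of a collar embedding is an open neighborhood of $S\times\{0\}$ and hence contains some $S\times[0,\epsilon)$, while an arbitrary configuration of embedded open disks may accumulate on the boundary and leave no room for any collar. So one cannot literally run ``fibration with contractible fibers over the whole target.'' The standard fix is either to first deformation retract the target onto the subspace of configurations contained in $S\times[\epsilon,1)$ for some $\epsilon$ (this inclusion is a weak equivalence, by pushing disks away from the boundary), or to express both sides as a homotopy colimit over shrinking collars; after that your fibration-plus-contractible-fiber argument (via parametrized isotopy extension and uniqueness of collars, in the weak $C^1$ topology as in \cite{andrademanifolds}) goes through. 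With that adjustment your sketch is a faithful reconstruction of the content the paper outsources to \cite{horeloperads}.
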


\begin{proof}
This follows from \cite[Proposition A.3.]{horeloperads}
\end{proof}

\begin{coro}\label{coro-equivalence enveloping factorization}
For a cofibrant $\oper{E}_d$-algebra $A$, there is a weak equivalence
\[U_A^{S_\tau}\goto{\simeq}\int_{S\times(0,1)}A\]
\end{coro}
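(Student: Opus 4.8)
The plan is to obtain the equivalence by applying the coend functor $-\otimes_{\cat{E}_d}A$ to the weak equivalence of right $\oper{E}_d$-modules $S_\tau\to\Emb_f(-,S\times(0,1))$ furnished by the preceding proposition. Indeed, by that proposition $U_A^{S_\tau}=S_\tau\otimes_{\cat{E}_d}A$, and by Definition \ref{factorization} $\int_{S\times(0,1)}A=\Emb_f(-,S\times(0,1))\otimes_{\cat{E}_d}A$, so the map of right modules induces precisely a map $U_A^{S_\tau}\to\int_{S\times(0,1)}A$ in $\cat{C}$ (which one also checks to be a map of associative algebras, using that the restriction map respects the stacking operations). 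It therefore suffices to show that, for a cofibrant $\oper{E}_d$-algebra $A$, the functor $-\otimes_{\cat{E}_d}A\colon\Mod_{\oper{E}_d}\to\cat{C}$ carries objectwise weak equivalences of right $\oper{E}_d$-modules to weak equivalences.

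To prove this homotopy-invariance statement I would replace the strict coend by the two-sided bar construction $B_\bullet(F,\cat{E}_d,A)$, whose object of $n$-simplices is the coproduct over tuples $(c_0,\dots,c_n)$ of objects of $\cat{E}_d$ of $F(c_0)\otimes\cat{E}_d(c_0,c_1)\otimes\dots\otimes\cat{E}_d(c_{n-1},c_n)\otimes A(c_n)$, where the tensors are formed via the Quillen bifunctor $\S\times\cat{C}\to\cat{C}$ recalled in Section 1. Since $A$ is cofibrant the associated symmetric monoidal functor $A\colon\cat{E}_d\to\cat{C}$ is objectwise cofibrant (by the last item of the first theorem of Section 1, together with cofibrancy of the unit of $\cat{C}$), every mapping object of $\cat{E}_d$ is a cofibrant simplicial set, and $F$ is objectwise cofibrant; hence each $B_n$ is cofibrant, the simplicial object $B_\bullet(F,\cat{E}_d,A)$ is Reedy cofibrant, and its realization is a model for the homotopy coend. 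A map $F\to F'$ of right modules that is an objectwise weak equivalence then induces a levelwise weak equivalence between Reedy cofibrant simplicial objects, hence a weak equivalence $|B_\bullet(F,\cat{E}_d,A)|\to|B_\bullet(F',\cat{E}_d,A)|$; together with the next point this yields the claim.

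The main obstacle is exactly that remaining point: for $A$ cofibrant the strict coend $F\otimes_{\cat{E}_d}A$ already computes the homotopy coend, i.e. the canonical map $|B_\bullet(F,\cat{E}_d,A)|\to F\otimes_{\cat{E}_d}A$ is a weak equivalence. For $F=\Emb_f(-,M)$ this is the content underlying Proposition \ref{left} and is proved in \cite{horelfactorization}; for a general right module $F$ it is the analogous statement of \cite{horeloperads}, and it is the single place where one genuinely uses that $A$ is cofibrant as an $\oper{E}_d$-algebra (and not merely objectwise cofibrant) together with the good behaviour of the smash product in the positive model structure. Granting it for $F=S_\tau$ and for $F=\Emb_f(-,S\times(0,1))$ and combining with the previous paragraph, we conclude
\[U_A^{S_\tau}=S_\tau\otimes_{\cat{E}_d}A\ \simeq\ \Emb_f(-,S\times(0,1))\otimes_{\cat{E}_d}A=\int_{S\times(0,1)}A,\]
which is the desired equivalence.
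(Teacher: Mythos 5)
Your proposal is correct and follows essentially the same route as the paper: apply $-\otimes_{\cat{E}_d}A$ to the weak equivalence $S_\tau\to\Emb_f(-,S\times(0,1))$ and invoke the fact (Proposition 2.8 of \cite{horelfactorization}) that for $A$ a cofibrant $\oper{E}_d$-algebra this coend functor preserves all weak equivalences of right $\oper{E}_d$-modules. The paper simply cites that fact where you sketch its bar-resolution proof, correctly isolating the one genuinely nontrivial input (that the strict coend agrees with the homotopy coend for cofibrant $A$), which you also defer to the references.
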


\begin{proof}
By the previous proposition, there is a weak equivalence of right $\oper{E}_d$-modules
\[S_\tau\goto{\simeq} \Emb_f(-,S\times(0,1))\]
We prove in \cite[Proposition 2.8.]{horelfactorization} that for $A$ cofibrant, the functor $-\otimes_{\cat{E}_d}A$ preserves all weak equivalences of right $\oper{E}_d$-modules.
\end{proof}

Let $A$ be an $\oper{E}_d$-algebra, the factorization homology $\int_{S\times(0,1)}A$ is an $\oper{E}_1$ algebra. Indeed, any embedding $(0,1)^{\sqcup n}\to (0,1)$ induces an embedding $(0,1)\times S^{\sqcup n}\to (0,1)\times S$ by taking the product with $S$. Applying $\int_{-}A$ to this last embedding, we get maps
\[\Emb^f((0,1)^{\sqcup n},(0,1))\to\Map_{\cat{C}}\left((\int_{S\times(0,1)}A)^{\otimes n},\int_{S\times(0,1)}A\right)\]
We would like to say that the weak equivalence of the previous proposition is an equivalence of $\oper{E}_1$-algebra but it is not one on the nose. However, we show in the next proposition that this is a map of $S_\tau$-shaped modules.

\begin{prop}\label{equivalence of S shaped module}
There is an $S_\tau$-shaped module structure on $\int_{S\times(0,1)}A$ which is such that the map
\[U_A^{S_\tau}\to \int_{S\times(0,1)}A\]
is a weak equivalence of $S_\tau$-shaped modules.
\end{prop}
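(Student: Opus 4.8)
The plan is to upgrade the weak equivalence $U_A^{S_\tau}\goto{\simeq}\int_{S\times(0,1)}A$ of Corollary \ref{coro-equivalence enveloping factorization} to a weak equivalence of $S_\tau$-shaped modules, by constructing the $S_\tau$-shaped module structure on $\int_{S\times(0,1)}A$ directly from factorization homology and framed embeddings, and then identifying the comparison map as a map of algebras over the colored operad $S_\tau\oMod$. The key geometric input is that $S_\tau\oMod$ encodes exactly the following data: the $\oper{E}_d$-algebra operations $\oper{E}_d(n)$ acting on the $a$-color, and the spaces $\Emb_f^{S_\tau}(D^{\sqcup n}\sqcup S\times[0,1),S\times[0,1))$ giving the mixed operations on the $m$-color.

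First I would observe that for each $n$ there is a ``doubling'' map on framed embeddings. Given a framed embedding $S\times[0,1)\sqcup D^{\sqcup n}\to S\times[0,1)$, one can restrict to the interior $S\times(0,1)$ and, using that the section $\phi$ is pointing inward, produce from it a framed embedding $S\times(0,1)\sqcup D^{\sqcup n}\to S\times(0,1)$; more precisely there is a map $S_\tau(n)\to\Emb_f(S\times(0,1)\sqcup D^{\sqcup n},S\times(0,1))$. Applying the symmetric monoidal functor $\int_{-}A:f\Man_d\to\cat{C}$ to such embeddings, and using that $\int_{S\times(0,1)}A$ together with $A=\int_D A$ is functorial in framed embeddings, yields structure maps
\[
S_\tau(n)\otimes\Bigl(\int_{S\times(0,1)}A\Bigr)\otimes A^{\otimes n}\longrightarrow\int_{S\times(0,1)}A .
\]
Compatibility of these with the operad structure on $\oper{E}_d$, the right $\oper{E}_d$-module structure on $S_\tau$, and the associative ($\square$) multiplication on $S_\tau$ follows from functoriality of $\int_{-}A$ applied to the corresponding compositions of framed embeddings — exactly the relations defining $S_\tau\oMod$ in \ref{def of Stau modules}. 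This endows $\int_{S\times(0,1)}A$ with an $S_\tau\oMod$-algebra structure whose restriction to the $a$-color is $A$, i.e.\ an $S_\tau$-shaped module structure.

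Next I would check that the comparison map $U_A^{S_\tau}\to\int_{S\times(0,1)}A$ respects these structures. By construction $U_A^{S_\tau}=S_\tau\otimes_{\cat{E}_d}A$, whose $S_\tau$-shaped module structure comes from the associative algebra structure on $S_\tau$ in right $\oper{E}_d$-modules; the map to $\int_{S\times(0,1)}A$ is induced by the map of right $\oper{E}_d$-modules $S_\tau\to\Emb_f(-,S\times(0,1))$ of the preceding proposition, which fits into the coend description $\int_{S\times(0,1)}A=\Emb_f(-,S\times(0,1))\otimes_{\cat{E}_d}A$. The point is that the map $S_\tau\to\Emb_f(-,S\times(0,1))$ is not merely a map of right modules but is compatible with the $\square$-multiplication and the mixed operations once the target is equipped with the doubling maps above; this is again a diagram-chase on framed embeddings, and it identifies the comparison as a morphism of $S_\tau\oMod$-algebras inducing the identity on $A$. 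Since it is already a weak equivalence on underlying objects by \ref{coro-equivalence enveloping factorization}, and since weak equivalences in $S_\tau\Mod_A$ are detected on underlying objects, it is a weak equivalence of $S_\tau$-shaped modules.

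The main obstacle is the bookkeeping in the first step: one must produce the doubling/restriction maps $S_\tau(n)\to\Emb_f(S\times(0,1)\sqcup D^{\sqcup n},S\times(0,1))$ coherently in $n$ and check that they intertwine \emph{all} the structure maps of $S_\tau\oMod$ — in particular that the associative $\square$-operation on $S_\tau$, which glues an embedding of the collar into itself, goes over to the $\oper{E}_1$-type composition on $\int_{S\times(0,1)}A$ coming from embeddings $(0,1)^{\sqcup 2}\to(0,1)$. This requires being careful about the inward-pointing section $\phi$ and how a framed $S_\tau$-manifold structure on the collar restricts to its interior; I expect this to reduce, after choosing a diffeomorphism $S\times[0,1)\cong S\times(0,1)$ respecting framings up to contractible choice, to the functoriality of $\int_{-}A$ already established in \ref{left} and the preceding propositions. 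Everything else is formal, using that the forgetful functor $S_\tau\Mod_A\to\cat{C}$ creates weak equivalences.
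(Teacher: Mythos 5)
Your proposal is correct and follows essentially the same route as the paper: the $S_\tau$-shaped module structure on $\int_{S\times(0,1)}A$ is obtained by restricting each framed embedding $S\times[0,1)\sqcup D^{\sqcup n}\to S\times[0,1)$ to the interior (the paper's phrase is ``forgetting about the boundary''), applying the functoriality of $\int_{-}A$, and then observing that the comparison map $U_A^{S_\tau}\to\int_{S\times(0,1)}A$ is a map of $S_\tau$-shaped modules which is already known to be a weak equivalence by \ref{coro-equivalence enveloping factorization}. The extra bookkeeping you flag (compatibility with the $\square$-multiplication and the right $\oper{E}_d$-module structure) is exactly what the paper leaves as ``easily seen,'' so your write-up is simply a more explicit version of the same argument.
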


\begin{proof}
Let us describe the $S_\tau$-shaped module structure on $\int_{S\times(0,1)}A$. Let $\phi$ be a point in $\Emb^{S_\tau}_f(S\times[0,1)\sqcup D^{\sqcup n},S\times[0,1))$. By forgetting about the boundary, $\phi$ defines a point in $\Emb_f(S\times(0,1)\sqcup D^{\sqcup n},S\times(0,1))$ which induces a map
\[(\int_{S\times(0,1)}A)\otimes A^{\otimes n}\to\int_{S\times(0,1)}A\]

Letting $\phi$ vary, this gives $\int_{S\times(0,1)}A$ the structure of an $S_\tau$-shaped module. The map $U_A^{S_\tau}\to \int_{S\times(0,1)}A$ is then easily seen to be a map of $S_\tau$-shaped module. Since,we already know that it is a weak equivalence, we are done.
\end{proof}

\section{Higher Hochschild cohomology}

In this section, we construct a geometric model for higher Hochschild cohomology. We still denote by $(\cat{C},\otimes,\un)$ the symmetric monoidal model category $\Mod_E$. Our construction remains valid in other contexts (spaces, chain complexes, simplicial modules) modulo a few obvious modifications. We denote by $\Hom$ the inner Hom in the category $\cat{C}$. This functor is uniquely determined by the fact that we have a natural isomorphism
\[\cat{C}(X\otimes Y,Z)\cong \cat{C}(X,\Hom(Y,Z))\]

For any associative $R$ algebra in $\cat{C}$, the $\cat{C}$-enrichment of $\cat{C}$ induces to a $\cat{C}$-enrichment of $\Mod_R$. We denote by $\Hom_{\Mod_R}$ the hom-object in $\Mod_R$. 

Let $A$ be an $\oper{E}_d$-algebra which we assume to be cofibrant. Our goal is to construct a functor
\[\R\Hom^{S\times[0,1]}_A:S_\tau\Mod_A\op\times S_\tau\Mod_A\to\cat{C}\]
which is weakly equivalent to $\R\Hom_{S_\tau\Mod_A}(-,-)=\R\Hom_{\Mod_{U_A^{S_\tau}}}(-,-)$ but which is closer to the factorization homology philosophy.

For $S_{\tau}$ a $d$-framed $(d-1)$-manifold. We denote by $-\tau$, the $d$-framing on $S$ obtained by pulling back $\tau$ along the isomorphism of the vector bundle $TS\oplus\mathbb{R}$ which is the identity on the first summand and multiplication by $-1$ on the second summand.

In particular, $S\times[0,1)$ is naturally an $S_{\tau}$ manifold and $S\times (0,1]$ is an $S_{-\tau}$-manifold.

\begin{defi}\label{defi-disks with boundaries}
We denote by $\cat{Disk}_d^{S_\tau\sqcup S_{-\tau}}$ the topological category whose objects are the $S_\tau\sqcup S_{-\tau}$-manifolds of the form $S\times[0,1)\sqcup D^{\sqcup n}\sqcup S\times(0,1]$ with $n$ in $\mathbb{Z}_{\geq 0}$ and whose morphisms are given by the spaces $\Emb^{S_\tau\sqcup S_{-\tau}}_f$. 
\end{defi}

\begin{cons}\label{Loday functor}
We define a functor 
\[\mathscr{F}(M,A,N):(\cat{Disk}_d^{S_\tau\sqcup S_{-\tau}})\op\to \cat{C}\]

Its value on $S\times[0,1)\sqcup D^{\sqcup n}\sqcup S\times(-1,0]$ is $\Hom(M\otimes A^{\otimes n},N)$. 

Notice that any map in $(S_\tau\sqcup S_{-\tau})\Mod$ can be decomposed as a disjoint union of embeddings of the following three types:
\begin{itemize}
\item An embedding $S\times[0,1)\sqcup D^{\sqcup k}\to S\times[0,1)$.

\item An embedding $D^{\sqcup l}\to D$ (where $l$ is possibly zero).

\item An embedding $D^{\sqcup m}\sqcup S\times(0,1]\to S\times(0,1]$.
\end{itemize}

Let $\phi$ be an embedding $S\times[0,1)\sqcup D^{\sqcup n}\sqcup S\times(0,1]\to S\times[0,1)\sqcup D^{\sqcup m}\sqcup S\times(0,1]$ and let
\[\phi=\phi_{+}\sqcup \psi_1\sqcup \ldots\sqcup \psi_r\sqcup \phi_-\]
be its decomposition with $\phi_+$ of the first type, $\phi_-$ of the third type and $\psi_i$ of the second type for each $i$. We need to extract from this data a map
\[\Hom(M\otimes A^{\otimes m},N)\to \Hom(M\otimes A^{\otimes n},N)\]

The action of $\phi_+$ and of the $\psi_i$ are constructed in an obvious way from the $\oper{E}_d$-structure of $A$ and the $S_\tau$-shaped module structure on $M$. The only non trivial part is the action of $\phi_-$. We can hence assume that $\phi=\id_{S\times[0,1)\sqcup D^{\sqcup p}}\sqcup \phi_-$ where $\phi_-$ is an embedding $D^{\sqcup n}\sqcup S\times(0,1]\to S\times(0,1]$. We want to construct
\[\Hom(M\otimes A^{\otimes p},N)\to\Hom(M\otimes A^{\otimes p}\otimes A^{\otimes n},N)\]

First, observe that there is a diffeomorphism $S\times[0,1)\to S\times(0,1]$ sending $(s,t)$ to $(s,1-t)$. This diffeomorphism sends the framing $\tau$ on $S\times[0,1)$ to the framing $-\tau$ on $S\times(0,1]$. Similarly, reflexion about the hyperplane $x_1=0$ induces a diffeomorphism $D\to D$. (Recall that $D=\{(x_1,\ldots,x_d),\sum x_i^2<1\}$. Conjugating by this diffeomorphism, the embdding $\phi_-$ induces an embedding $\tilde{\phi}_+$
\[\tilde{\phi}_-:S\times[0,1)\sqcup D^{\sqcup n}\to S\times[0,1)\]

In fact, it is straightforward to see that this construction induces a homeomorphism
\[\Emb^{S_{-\tau}}_f(S\times(0,1]\sqcup D^{\sqcup n},S\times(0,1])\to\Emb^{S_\tau}_f(S\times[0,1)\sqcup D^{\sqcup n},S\times[0,1))\]

Now, notice that $\Hom(M\otimes A^{\otimes p},N)$ has the structure of an $S_\tau$-shaped $A$ module induced from the one on $N$. Thus, the map $\tilde{\phi}_-$ induces a map:
\[\Hom(M\otimes A^{\otimes p},N)\otimes A^{\otimes n}\to \Hom(M\otimes A^{\otimes p},N)\]

This map is adjoint to
\[\Hom(M\otimes A^{\otimes p},N) \to \Hom(M\otimes A^{\otimes p}\otimes A^{\otimes n},N)\]
which we define to be the action of $\phi$.
\end{cons}

\begin{rem}
In order to be homotopically meaningful, we need a derived version of $\mathscr{F}(M,A,N)$. We claim that the homotopy type of $\mathscr{F}(M,A,N)$ only depends on the homotopy type of $M$, $A$ and $N$ as long as $A$ is a cofibrant $\oper{E}_d$-algebra and $M$ is a cofibrant object of $S_\tau\Mod_A$ and $N$ is fibrant. Indeed, these conditions imply that 
\begin{itemize}
\item The object $M$ is cofibrant in $\cat{C}$. This is because the forgetful functor $S_\tau\Mod_A\to\cat{C}$ preserves cofibrations.
\item $A$ is cofibrant in $\cat{C}$.
\end{itemize} 

This implies that for all $k$, $\Hom(M\otimes A^{\otimes k},N)\simeq \R\Hom(M\otimes A^{\otimes k},N)$.

\end{rem}

Let  $\cat{A}$ be a small category, $F$ a functor from $\cat{A}$ to $\cat{S}$ and $G$ a functor from $\cat{A}$ to $\cat{C}$. We denote by $\hom_{\cat{A}}(F,G)$ the end
\[\int_{\cat{A}}\hom(F(-),G(-))\]

We denote by $\R\hom_{\cat{A}}(F,G)$ the derived functor which is obtained by taking a cofibrant replacement of the source and a fibrant replacement of the target.

\begin{defi}\label{Hom over cylinder}
We define $\R\Hom^{S\times[0,1]}_A(M,N)$ to be the homotopy end
\[\R\hom_{(\cat{Disk}_d^{S_\tau\sqcup S_{-\tau}})\op}(\Emb_f^{S_\tau\sqcup S_{-\tau}}(-,S\times[0,1]),\mathscr{F}(QM,A,RN))\]
where $QM\to M$ is a cofibrant replacement in $S_\tau\Mod_A$ and $N\to RN$ is a fibrant replacement.
\end{defi}

The main theorem of this section is the following

\begin{theo}\label{right hom}
There is a weak equivalence:
\[\R\Hom^{S\times[0,1]}_A(M,N)\simeq\R\Hom_{S_\tau\Mod_A}(M,N)\]
\end{theo}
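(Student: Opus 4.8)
\section*{Proof proposal for Theorem \ref{right hom}}

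The plan is to identify both sides with $\R\Hom$ over an auxiliary category and to use the standard comparison between homotopy ends over a category and homotopy mapping spaces of left modules. Concretely, recall from Proposition \ref{equivalence of S shaped module} and Corollary \ref{coro-equivalence enveloping factorization} that $S_\tau\Mod_A$ is the category of left modules over the associative algebra $U_A^{S_\tau}=S_\tau\otimes_{\cat{E}_d}A$, and that $U_A^{S_\tau}$ is weakly equivalent to $\int_{S\times(0,1)}A$ as an $S_\tau$-shaped module. The right-hand side $\R\Hom_{S_\tau\Mod_A}(M,N)=\R\Hom_{\Mod_{U_A^{S_\tau}}}(M,N)$ can therefore be written, via the bar construction, as a homotopy end over (a category built from) the PROP-style category encoding iterated multiplications of $U_A^{S_\tau}$ on $M$ and $N$. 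I would first set up precisely this presentation: the two-sided bar resolution of $M$ as a $U_A^{S_\tau}$-module gives a simplicial object whose totalization computes $\R\Hom_{\Mod_{U_A^{S_\tau}}}(M,N)$ as $\R\hom$ over $\Delta\op$ of a cosimplicial object whose $n$-th term is $\Hom((U_A^{S_\tau})^{\otimes n}\otimes M,N)$.

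Next I would compare this with the homotopy end of Definition \ref{Hom over cylinder}. The key point is that the category $(\cat{Disk}_d^{S_\tau\sqcup S_{-\tau}})\op$ together with the right module $\Emb_f^{S_\tau\sqcup S_{-\tau}}(-,S\times[0,1])$ is a ``geometric resolution'' of the same data: decomposing embeddings into the three types listed in Construction \ref{Loday functor} exhibits a morphism in this category as the gluing of a left $S_\tau$-action (the $\phi_+$ part), an internal $\oper{E}_d$-multiplication (the $\psi_i$), and a right-hand reflected $S_{-\tau}$-action (the $\phi_-$ part, which after the reflection homeomorphism $\Emb^{S_{-\tau}}_f(S\times(0,1]\sqcup D^{\sqcup n},S\times(0,1])\cong\Emb^{S_\tau}_f(S\times[0,1)\sqcup D^{\sqcup n},S\times[0,1))$ becomes another $U_A^{S_\tau}$-action, now on the target $N$). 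Thus $\mathscr{F}(M,A,N)$ is exactly the ``two-variable'' functor $\Hom(M\otimes A^{\otimes n},N)$ with the $A$'s carrying $U_A^{S_\tau}$-bimodule-type structure coming from $M$ on one side and $N$ on the other. I would make this precise by constructing a functor from the relevant indexing category for the bar construction to $(\cat{Disk}_d^{S_\tau\sqcup S_{-\tau}})\op$ (or a zig-zag of such), compatible with the two right modules and the two coefficient functors, and then invoke Proposition \ref{framed embeddings of disks} (evaluation at centers of disks) together with the weak equivalence $S_\tau\simeq\Emb_f(-,S\times(0,1))$ to show the comparison functor induces a weak equivalence on homotopy ends.

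More structurally, the cleanest route is probably: (1) show $\R\Hom^{S\times[0,1]}_A(M,N)$ only depends on $M,N$ through the left $U_A^{S_\tau}$-module $M$ and the left $U_A^{S_\tau}$-module $N$ — this is where the reflection trick and the identification $S\times[0,1)\cong S\times(0,1]$ matter, since it turns the ``module on both ends of the cylinder'' picture into ``$\Hom$ of left modules''; (2) observe that both $\R\Hom^{S\times[0,1]}_A(-,-)$ and $\R\Hom_{\Mod_{U_A^{S_\tau}}}(-,-)$ are homotopy-invariant, send colimits in the first variable to limits, and agree on the generators: when $M=U_A^{S_\tau}$ (the free module on one generator), both sides reduce to $RN$, since the homotopy end over $\cat{Disk}_d^{S_\tau\sqcup S_{-\tau}}$ then collapses by a cofinality/contractibility argument exactly as $\int_{S\times[0,1]}A$ is computed by Proposition \ref{left}; (3) conclude by a bar-resolution argument, writing a general $M$ as a homotopy colimit of frees and commuting both functors past that homotopy colimit. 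Step (2) is the technical heart: one must check that the homotopy end defining $\R\Hom^{S\times[0,1]}_A(U_A^{S_\tau},N)$ is weakly equivalent to $RN$, which amounts to showing that the relevant slice category or the poset of disks on $S\times[0,1]$ touching the left boundary has contractible nerve, analogous to the computation of factorization homology of a collar.

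The main obstacle I expect is the bookkeeping in step (1)–(2): making the reflection identification of Construction \ref{Loday functor} fully functorial and checking that the resulting $U_A^{S_\tau}$-module structures on both ends are compatible with the simplicial (bar) structure, so that the comparison functor between indexing categories is genuinely a functor and not merely a levelwise map. Once the functor is in place, the weak-equivalence statement follows from the pointwise equivalences $S_\tau(n)\xrightarrow{\simeq}\Emb_f(D^{\sqcup n},S\times(0,1))$ and Proposition \ref{framed embeddings of disks}, plus the fact (used already in Corollary \ref{coro-equivalence enveloping factorization}) that $-\otimes_{\cat{E}_d}A$ and the derived $\hom$ out of it preserve the relevant weak equivalences, together with a Reedy/cofinality argument identifying the two homotopy ends.
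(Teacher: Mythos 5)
Your overall skeleton matches the paper's: fix a fibrant $N$, view both sides as functors of $M$ that preserve weak equivalences and carry homotopy colimits to homotopy limits, and reduce to the generator $M=U_A^{S_\tau}$, where the right-hand side is trivially $N$. The divergence --- and the gap --- is in how you handle the generator case, which you yourself flag as the technical heart. You propose that for $M=U_A^{S_\tau}$ the homotopy end of Definition \ref{Hom over cylinder} ``collapses by a cofinality/contractibility argument exactly as $\int_{S\times[0,1]}A$ is computed by Proposition \ref{left}.'' That is not the right mechanism. Proposition \ref{left} is a co-Yoneda/cofinality statement for a homotopy \emph{colimit} indexed by a poset of disks; here you face a homotopy \emph{end} whose value at $S\times[0,1)\sqcup D^{\sqcup n}\sqcup S\times(0,1]$ is $\Hom(U_A^{S_\tau}\otimes A^{\otimes n},N)$, i.e.\ the totalization of a cobar-type cosimplicial object. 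Its collapse to $N$ when the first slot is free is the extra-degeneracy argument for the two-sided cobar construction ($\R\Hom_{R}(R,N)\simeq N$), not the contractibility of the nerve of a poset of disks touching the left boundary. Moreover $U_A^{S_\tau}$ sits \emph{inside} the $\Hom$ as a coend, so one must first commute it past the end over $\cat{Disk}_d^{S_\tau\sqcup S_{-\tau}}$ before any such collapse can even be formulated; nothing in your sketch does this.

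The paper resolves the generator case by a different chain: replace $U_A^{S_\tau}$ by $\int_{S\times(0,1)}A$ as an $S_\tau$-shaped module (Proposition \ref{equivalence of S shaped module}), use Corollary \ref{equivalence} to rewrite $\R\Hom^{S\times[0,1]}_A(\int_{S\times(0,1)}A,N)$ as $\R\Hom^{[0,1]}_{\int_{S\times(0,1)}A}(\int_{S\times(0,1)}A,N)$ --- a genuinely one-dimensional problem for the $\oper{E}_1$-algebra $\int_{S\times(0,1)}A$ acting on itself --- and then invoke Corollary \ref{E_1 vs Ass}, which rectifies the pair (algebra, module) to a strict associative algebra with a strict left module and quotes the classical identification of the cobar construction with $\R\Hom_{A'}(A',N')\simeq N'$. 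Your first, more hands-on route (a comparison functor from the bar-resolution indexing category to $(\cat{Disk}_d^{S_\tau\sqcup S_{-\tau}})\op$) contains the right ingredients and is close in spirit to Corollaries \ref{equivalence} and \ref{E_1 vs Ass}, but as written it defers exactly the point at issue. To repair the proposal, replace the cofinality claim in your step (2) by the reduction through Corollaries \ref{equivalence} and \ref{E_1 vs Ass}, or else prove the $\oper{E}_1$-rectification and the cobar collapse directly.
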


The rest of this section is devoted to the proof of this theorem. The reader willing to admit this result can safely skip the proof and move directly to the last subsection of this section.

\subsection*{Case of $\oper{E}_1$-algebras}

The one-point space is $0$-manifold. This manifold has two $1$-framing that we call the negative and positive framing. By definition, a $1$-framing of the point is the data of a basis of $\mathbb{R}$ as a $\mathbb{R}$-vector space. The positive framing is the one given by $1$ and the negative framing is the one given by $-1$. Thus, by definition \ref{defi-Stau rigth modules}, we get two right modules over $\oper{E}_1$. We denote by $\mathscr{R}$ the one corresponding to the negative framing and $\mathscr{L}$ the one corresponding to the positive framing.

\begin{defi}
A \emph{left module} over an $\oper{E}_1$-algebra $A$ is an object of the category $\mathscr{L}\Mod_A$. Similarly, a \emph{right module} over $A$ is an object of $\mathscr{R}\Mod_A$.
\end{defi}

More explicitly, an object of $\mathscr{L}\Mod_A$ is an object of $\cat{C}$, $M$ together with multiplication maps
\[A^{\otimes n}\otimes M\to M\]
for each embedding $[0,1)\sqcup (0,1)^{\sqcup n}\to [0,1)$
These maps are moreover  supposed to satisfy a unitality and associativity condition.

We denote by $\cat{Disk}_1^{-+}$ the one dimensional version of the category $\cat{Disk}^{S_\tau\sqcup S_{-\tau}}$ defined in \ref{defi-disks with boundaries}. As a particular case of \ref{Hom over cylinder}, given a cofibrant $\oper{E}_1$-algebra $A$ and two left modules $M$ and $N$, we can define $\Hom^{[0,1]}_A(M,N)$ and this is given by natural transformations between contravariants functors on $\cat{Disk}_1^{-+}$.

\begin{defi}
The \emph{category of non-commutative intervals} denoted $\cat{Ass}^{-+}$ is a skeleton of the category whose objects are finite sets containing $\{-,+\}$ and whose morphisms are maps of finite sets $f$ preserving $-$ and $+$ together with the extra data of a linear ordering of each fiber which is such that $-$ (resp. $+$) is the smallest (resp. largest) element in the fiber over $-$ (resp $+$).
\end{defi}

Note that the functor $\pi_0$ which sends a disjoint union of intervals to the set of connected components is an equivalence of topological categories from  $\Disk_1^{-+}$ to $\cat{Ass}^{-+}$. In fact, we could have defined $\cat{Ass}^{-+}$ as the homotopy category of $\Disk_1^{-+}$.

Let $A$ be an associative algebra and $M$ and $N$ be left modules over it. We define $F(M,A,N)$ to be the obvious functor $(\cat{Ass}^{-+})\op\to\cat{C}$ sending $\{-,1,\ldots,n,+\}$ to $\Hom( A^{\otimes n}\otimes M,N)$. The functoriality is defined analogously to \ref{Loday functor}.

Recall that $\Delta\op$ can be described as a skeleton of the category whose objects are linearly ordered sets with at least two elements and morphisms are order preserving morphisms preserving the minimal and maximal element.

With this description, there is an obvious functor $\Delta\op\to\cat{Ass}^{-+}$ which sends a totally ordered set with minimal element $-$ and maximal element $+$ to the underlying finite set and an order preserving map to the underlying map with the data of the induced linear ordering of each fiber.

Recall that given a triple $(M,A,N)$ consisting of an associative algebra $A$ and two left modules $M$ and $N$, we can form the cobar construction $C^{\bullet}(M,A,N)$ which is a cosimplicial object of $\cat{C}$ whose value at $[n]$ is $\Hom(A^{\otimes n}\otimes M,N)$. It is classical that if $A$ and $M$ are cofibrant and $N$ is fibrant, then $C^{\bullet}(M,A,N)$ is Reedy fibrant and its totalization is a model for the derived Hom $\R\Hom_{\Mod_A}(M,N)$.

\begin{prop}
Let $A$ be an associative algebra and $M$ and $N$ be left modules over it. The composition of $F(M,A,N)$ with the functor $\Delta\to(\cat{Ass}^{-+})\op$ is the cobar construction $C^\bullet(M,A,N)$
\end{prop}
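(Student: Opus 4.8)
The plan is to unwind both functors explicitly and check they agree on objects and morphisms. First I would describe the composite functor $\Delta \to (\cat{Ass}^{-+})\op \to \cat{C}$ on objects: a linearly ordered set $[n] = \{- = 0 < 1 < \cdots < n < n+1 = +\}$ maps to the underlying finite set $\{-, 1, \ldots, n, +\}$ in $\cat{Ass}^{-+}$, and then $F(M,A,N)$ sends this to $\Hom(A^{\otimes n} \otimes M, N)$. This is precisely $C^n(M,A,N)$, so the two cosimplicial objects agree on objects. The only content is to verify that the cosimplicial structure maps match.

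Next I would check the coface and codegeneracy maps. The functor $\Delta\op \to \cat{Ass}^{-+}$ sends an order-preserving map $[m] \to [n]$ (preserving min and max) to the underlying map of finite sets together with the induced linear ordering on each fiber. Dualizing, a morphism $[n] \to [m]$ in $\Delta$ becomes a morphism in $(\cat{Ass}^{-+})\op$, i.e. a morphism $\{-,1,\ldots,m,+\} \to \{-,1,\ldots,n,+\}$ in $\cat{Ass}^{-+}$ equipped with ordered fibers. Applying $F(M,A,N)$, which is built exactly as in \ref{Loday functor}, produces a map $\Hom(A^{\otimes m} \otimes M, N) \to \Hom(A^{\otimes n} \otimes M, N)$. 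I would then go through the generating cofaces $\delta^i : [n-1] \to [n]$ and codegeneracies $\sigma^i : [n+1] \to [n]$ one at a time: for $0 < i < n$ the corresponding morphism in $\cat{Ass}^{-+}$ collapses the $i$-th and $(i+1)$-st interval with the standard ordering, and $F$ applies a multiplication $A \otimes A \to A$ in that slot, which is exactly the inner coface of the cobar construction; for $i = 0$ it uses the left-action $A \otimes M \to M$, matching $d^0$; and for $i = n$ it uses the unit $\un \to A$ applied on the outside — this is where the construction of \ref{Loday functor} routes the map through the reflected embedding $\tilde\phi_-$ and the $\mathscr{L}$-shaped module structure on the target — which recovers the last coface $d^n$ of the cobar construction. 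The codegeneracies similarly come from the unit maps $\un \to A$ inserted in the $i$-th slot.

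The main obstacle, and really the only place where one has to be careful, is matching the outermost coface $d^n$ (and the overall handedness convention) with the $\phi_-$-type morphisms in \ref{Loday functor}: there, functoriality in an embedding of the $S\times(0,1]$-end is defined by reflecting to an $S\times[0,1)$-embedding and using the induced $S_\tau$-shaped module structure on $\Hom(M \otimes A^{\otimes p}, N)$. In the one-dimensional case $S = \mathrm{pt}$ this reflection swaps the two $1$-framings of the point, interchanging $\mathscr{L}$ and $\mathscr{R}$, so one must confirm that the module structure being used on $\Hom(A^{\otimes n}\otimes M,N)$ is the correct $\mathscr{L}$-structure inherited from $N$ and that the ordering conventions on fibers in $\cat{Ass}^{-+}$ are compatible with the standard ordering used to define the inner coface maps of $C^\bullet(M,A,N)$. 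Once the conventions are pinned down this is a direct diagram-chase, so I would state the bookkeeping carefully and then assert that the remaining simplicial identities hold by inspection, since both sides are determined by their values on the generating morphisms of $\Delta$.
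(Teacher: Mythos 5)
Your plan is the right one, and it is essentially the computation the paper itself dismisses as ``a straightforward computation'': match objects, then check the generating cofaces and codegeneracies of $\Delta$ against the three types of morphisms from which the functoriality of $F(M,A,N)$ is assembled. The object-level identification, the inner cofaces (multiplication $A\otimes A\to A$), the coface at the $-$ end (the action $A\otimes M\to M$), and the codegeneracies (the unit $\un\to A$, corresponding to the injections in $\cat{Ass}^{-+}$, i.e.\ to the empty embedding in $\oper{E}_1(0)$) are all handled correctly.

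The one concrete error is your description of the remaining extreme coface: you say it ``uses the unit $\un\to A$ applied on the outside.'' No coface of the cobar construction $C^\bullet(M,A,N)=\Hom(A^{\otimes \bullet}\otimes M,N)$ is given by the unit; the unit only produces codegeneracies. The $n{+}1$ cofaces $C^{n-1}\to C^{n}$ correspond to the $n{+}1$ surjections of $\{-,1,\dots,n,+\}$ collapsing a pair of adjacent elements: collapsing $(-,1)$ gives the action on $M$, collapsing an inner pair gives a multiplication, and collapsing $(n,+)$ gives the coface induced by the left $A$-action on $N$ transported through $\Hom(-,N)$. That last map is exactly what the $\tilde{\phi}_-$/reflection mechanism of Construction \ref{Loday functor} produces (the left-module structure on $\Hom(A^{\otimes p}\otimes M,N)$ inherited from $N$), so the mechanism you name in the same sentence is the correct one --- only the label ``unit'' is wrong. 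There is also a harmless variance slip earlier on: a morphism $[n]\to[m]$ in $\Delta$ should induce $\Hom(A^{\otimes n}\otimes M,N)\to\Hom(A^{\otimes m}\otimes M,N)$, not the reverse. With these corrections your bookkeeping goes through and agrees with the paper's (unwritten) argument.
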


\begin{proof}
This is a straightforward computation.
\end{proof}

We denote by $P:(\cat{Ass}^{-+})\op\to\S$ the left Kan extension of the cosimplicial space which is levelwise a point along the map $\Delta\to(\cat{Ass}^{-+})\op$. Concretely $P$ sends a finite set with two distinguished elements $-$ and $+$ to the set of linear ordering of that set whose smallest element is $-$ and largest element is $+$ seen as a discrete space.

\begin{coro}
Let $A$ be a cofibrant associative algebra and $M$ and $N$ be left modules over it. Then
\[\R\Hom_A(M,N)\simeq \R\hom_{\cat{Ass}^{-+}}(P,F(M,A,N))\]
\end{coro}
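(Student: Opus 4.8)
The plan is to deduce this corollary from the preceding proposition together with the standard totalization description of $\R\Hom_A(M,N)$ and a cofinality/Kan-extension bookkeeping argument. First I would recall the chain of identifications: by the classical fact quoted above, for $A$ cofibrant and $M$ cofibrant, $N$ fibrant, $\R\Hom_A(M,N)\simeq \on{Tot}(C^\bullet(M,A,N))$, and $\on{Tot}$ of a Reedy fibrant cosimplicial object is precisely the homotopy end $\R\hom_\Delta(*,C^\bullet(M,A,N))$, where $*$ is the constant cosimplicial point. By the previous proposition, $C^\bullet(M,A,N)$ is the restriction of $F(M,A,N)$ along $j\colon \Delta\to(\cat{Ass}^{-+})\op$. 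So it remains to compare $\R\hom_\Delta(*,j^*F(M,A,N))$ with $\R\hom_{\cat{Ass}^{-+}}(P,F(M,A,N))$.

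The key step is the adjunction between restriction and left Kan extension along $j$: since $P=\on{Lan}_j(*)$ by definition, there is an isomorphism of ends $\hom_{(\cat{Ass}^{-+})\op}(P,G)\cong \hom_\Delta(*,j^*G)$ for any $G\colon(\cat{Ass}^{-+})\op\to\cat{C}$, natural in $G$. I would then upgrade this to the derived level: one must check that a cofibrant replacement of $P$ in the projective model structure on $\cat{S}$-valued functors on $(\cat{Ass}^{-+})\op$ restricts, after a cofinality argument, to a cofibrant replacement of $*$ over $\Delta$ — or, more efficiently, observe that $F(M,A,N)$ (resp.\ $j^*F(M,A,N)=C^\bullet(M,A,N)$) is objectwise fibrant and Reedy/projectively fibrant under our cofibrancy/fibrancy hypotheses on $M$, $A$, $N$, so that the underived $\hom$ already computes the homotopy end on the right, and likewise $\on{Tot}=\R\hom_\Delta(*,-)$ on the left. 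Combining the strict isomorphism of ends with these two "underived computes derived" statements gives the desired equivalence.

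Concretely I would argue: $j\colon\Delta\to(\cat{Ass}^{-+})\op$ is such that $P=\on{Lan}_j(*)$, so $\R\hom_{\cat{Ass}^{-+}}(P,F)\simeq\R\hom_{\cat{Ass}^{-+}}(\on{Lan}_j(*),F)\simeq\R\hom_\Delta(*,j^*F)$, the last equivalence being the derived form of the $(\on{Lan}_j,j^*)$-adjunction, valid because $j^*F=C^\bullet(M,A,N)$ is Reedy fibrant (hence fibrant for the relevant model structure) so no further fibrant replacement of the target is needed. Then $\R\hom_\Delta(*,C^\bullet(M,A,N))\simeq\on{Tot}(C^\bullet(M,A,N))\simeq\R\Hom_A(M,N)$, again using Reedy fibrancy of the cobar construction together with cofibrancy of $A$ and $M$.

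The main obstacle I expect is the passage from the strict $1$-categorical Kan-extension adjunction to a statement about homotopy ends: one must ensure that the left Kan extension $\on{Lan}_j$ is computed correctly at the derived level, i.e.\ that $\on{Lan}_j(*)$ already has the correct homotopy type (is a homotopy left Kan extension), or else replace $*$ by a projectively cofibrant resolution and track it through $j^*$. Since $*$ is the terminal functor this is a mild point — it amounts to the observation that the relevant comma categories $j/x$ have contractible nerves (indeed $P(x)$ records exactly $\pi_0$ of such a comma category, and it is discrete, which reflects that these comma categories are homotopy discrete) — but it is the one place where the argument is more than formal bookkeeping. Everything else is a concatenation of the previous proposition, the classical identification $\on{Tot}\circ C^\bullet\simeq \R\Hom_A$, and the adjunction defining $P$.
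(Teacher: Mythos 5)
Your proposal is correct and follows essentially the same route as the paper: identify $\R\Hom_A(M,N)$ with the totalization of the Reedy fibrant cobar construction, recognize this as $\R\hom_\Delta(*,C^\bullet(M,A,N))$, and transport along the $(\on{Lan}_j,j^*)$-adjunction defining $P$. The only difference is that you make explicit the verification that the strict Kan-extension adjunction passes to homotopy ends, a point the paper's one-line proof leaves implicit.
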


\begin{proof}
Assume that $M$ is cofibrant and $N$ is fibrant. If they are not , we take an appropriate replacement. The left hand side is
\[\on{Tot}([n]\to C^n(M,A,N)=\Hom(M\otimes A^{\otimes n},N))\]

According to the cofibrancy/fibrancy assumption, this cosimplicial functor is Reedy fibrant, therefore the totalization coincides with the homotopy limit. Hence we have
\[\R\Hom_A(M,N)\simeq \R\hom_{\Delta}(*,C^\bullet(M,A,N))\simeq \R\hom_{\cat{Ass}^{-+}}(P,F(M,A,N))\]
\end{proof}

\begin{prop}
Let $A$ be a cofibrant associative algebra and $M$ and $N$ be left modules over it. Then there is a weak equivalence
\[\R\Hom_A^{[0,1]}(M,N)\goto{\simeq}\R\Hom_A(M,N)\]
\end{prop}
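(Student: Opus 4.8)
The plan is to transport the homotopy end defining $\R\Hom_A^{[0,1]}(M,N)$ along the equivalence $\pi_0\colon\cat{Disk}_1^{-+}\to\cat{Ass}^{-+}$ and then recognise the result as the expression $\R\hom_{(\cat{Ass}^{-+})\op}(P,F(M,A,N))$ produced by the corollary above, which already equals $\R\Hom_A(M,N)$.

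First I would make two harmless reductions: replace $M$ by a cofibrant left $A$-module (hence cofibrant in $\cat{C}$, since $A$ is cofibrant) and $N$ by a fibrant object. With these choices the comparison map $\mathscr{F}(M,A,N)\to\mathscr{F}(QM,A,RN)$ is an objectwise weak equivalence, so inside the defining homotopy end for $\R\Hom^{[0,1]}_A(M,N)$ one may replace $\mathscr{F}(QM,A,RN)$ by $\mathscr{F}(M,A,N)$ without changing the homotopy type. Next, because $A$ is a genuine associative algebra and $M$, $N$ genuine left $A$-modules, the structure maps of Construction \ref{Loday functor} are locally constant on the embedding spaces; hence $\mathscr{F}(M,A,N)$ factors through $\pi_0\op\colon(\cat{Disk}_1^{-+})\op\to(\cat{Ass}^{-+})\op$, and a direct comparison of the two recipes identifies the resulting functor on $(\cat{Ass}^{-+})\op$ with $F(M,A,N)$ (up to the symmetry isomorphism $M\otimes A^{\otimes n}\cong A^{\otimes n}\otimes M$).

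The crucial step is the analogous identification on the $\cat{S}$-valued side: I claim $\Emb_f^{-+}(-,[0,1])$ is objectwise weakly equivalent to $P\circ\pi_0\op$. For $U=[0,1)\sqcup(0,1)^{\sqcup n}\sqcup(0,1]$, a framed $-+$-embedding $U\to[0,1]$ is pinned to the standard collar near $0$ and near $1$ and is otherwise free, so arguing as in \ref{framed embeddings of disks} --- evaluate at the centres of the $n$ middle intervals and use the homotopy-pullback description of framed embeddings from Definition \ref{framed embedding} --- one identifies $\Emb_f^{-+}(U,[0,1])$, up to weak equivalence, with the configuration space of $n$ points in the open interval. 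That space is a disjoint union of $n!$ contractible components indexed by the linear orderings of $\{1,\dots,n\}$, and the two collars force these to be precisely the linear orderings of $\pi_0U=\{-,1,\dots,n,+\}$ having $-$ smallest and $+$ largest, i.e.\ the discrete set $P(\pi_0U)$. Thus the canonical map $\Emb_f^{-+}(U,[0,1])\to\pi_0\Emb_f^{-+}(U,[0,1])\cong P(\pi_0U)$ is a weak equivalence; its naturality follows from naturality of $\pi_0$ together with the observation that each of the three generating types of morphism in $\cat{Disk}_1^{-+}$ acts on configurations by the induced embedding, which on components is exactly the functoriality of $P$.

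Putting the pieces together, $\pi_0$ is an equivalence of (simplicial) categories, so precomposition with $\pi_0\op$ induces a weak equivalence of homotopy ends; combining this with the two identifications above and the homotopy-invariance of $\R\hom$ in both of its variables yields
\begin{align*}
\R\Hom_A^{[0,1]}(M,N)&\simeq\R\hom_{(\cat{Disk}_1^{-+})\op}\bigl(P\circ\pi_0\op,\,F(M,A,N)\circ\pi_0\op\bigr)\\
&\simeq\R\hom_{(\cat{Ass}^{-+})\op}\bigl(P,F(M,A,N)\bigr)\simeq\R\Hom_A(M,N),
\end{align*}
the last equivalence being the corollary above. The one place I expect real work is the geometric step: checking, with the collar/boundary conditions built into an $S_\tau\sqcup S_{-\tau}$-manifold, that $\Emb_f^{-+}(-,[0,1])$ is levelwise a disjoint union of contractibles with exactly the stated set of components, and that the resulting identification with $P$ is natural for all three generating morphism types. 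Everything else is formal bookkeeping with homotopy ends, the equivalence $\pi_0$, and Quillen (co)fibrant replacements.
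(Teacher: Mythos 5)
Your argument is essentially the paper's: the paper computes both derived ends as totalizations of Reedy fibrant cobar constructions over $\Disk_1^{-+}$ and $\cat{Ass}^{-+}$ respectively and asserts that the evident comparison map between them is a degreewise weak equivalence, which is precisely the content of your three levelwise identifications ($\pi_0$ an equivalence of topological categories, $\Emb_f^{-+}(-,[0,1])\simeq P\circ\pi_0$, and $\mathscr{F}(M,A,N)\simeq F(M,A,N)\circ\pi_0$). You have simply made explicit the geometric input --- homotopy-discreteness of the collared framed embedding spaces, with components indexed by the admissible linear orderings --- that the paper leaves as ``obvious''.
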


\begin{proof}
The right hand side is the derived end
\[\R\hom_{\cat{Ass}^{-+}}(P,F(M,A,N))\]
which can be computed as the totalization of the Reedy fibrant cosimplicial object
\[C^\bullet(P,\cat{Ass}^{-+},F(M,A,N))\]

Similarly, the left hand side is the totalization of the Reedy fibrant cosimplicial object
\[C^\bullet(\Emb^{-+}(-,[0,1]),\Disk^{-+},\mathscr{F}(M,A,N))\]

There is an obvious map of cosimplicial objects
\[C^\bullet(\Emb^{-+}(-,[0,1]),\Disk^{-+},\mathscr{F}(M,A,N))\to C^\bullet(P,\cat{Ass}^{-+},F(M,A,N))\]
which is degreewise a weak equivalence. Therefore, there is a weak equivalence between the totalizations
\[\R\Hom_A^{[0,1]}(M,N)\goto{\simeq}\R\Hom_A(M,N)\]
\end{proof}

If $A$ is an $\oper{E}_1$-algebra, it can be seen as an object of $\mathscr{L}\Mod_A$ as follows. The map
\[A\otimes A^{\otimes n}\to A\]
corresponding to an embedding
\[\phi:[0,1)\sqcup (0,1)^{\sqcup n}\to [0,1)\]
is defined to be the multiplication map $A^{\otimes n+1}\to A$ corresponding to the restriction of $\phi$ to its interior.

We denote by $(A,A^m)$ the $\mathscr{L}\oMod$-algebra consisting of $A$ acting on itself in the above way.

\begin{coro}\label{E_1 vs Ass}
Let $A$ be a cofibrant $\oper{E}_1$-algebra and $N$ a left module. Then 
\[\R\Hom_A^{[0,1]}(A^m,N)\simeq N\]
\end{coro}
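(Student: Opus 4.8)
The plan is to reduce the statement to a computation about the cobar construction and then to the homotopy theory of left modules. First I would invoke the previous proposition, which gives a weak equivalence $\R\Hom_A^{[0,1]}(A^m,N)\simeq\R\Hom_A(A^m,N)$ once we know that the $\oper{E}_1$-algebra $A$, viewed as the $\mathscr{L}\oMod$-algebra $(A,A^m)$, is cofibrant as a left module. So the first step is to check that, for $A$ a cofibrant $\oper{E}_1$-algebra, the left module $A^m$ is cofibrant in $\mathscr{L}\Mod_A$ (or at least that it can be replaced by such without changing the homotopy type, using that the enveloping algebra $U_A^{\mathscr{L}}$ acts on it through the unit-like map coming from the associative structure). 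In fact, since $A^m\simeq U_A^{\mathscr{L}}$ as left modules up to a weak equivalence — this is the free rank-one module — cofibrancy of $A$ together with the fact that $-\otimes_{\cat{E}_1}A$ preserves weak equivalences of right modules should give that $A^m$ is homotopy equivalent to the free module of rank one, which is cofibrant.

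Next I would identify $\R\Hom_A(A^m,N)$ with $N$ directly. Once $A$ is replaced by a strict associative algebra (via the standard rectification, or simply by passing to $U_A^{\mathscr{L}}$), the module $A^m$ is the free left module of rank one on the unit, so $\R\Hom_{\Mod_A}(A,N)\simeq N$ by the usual free-forgetful adjunction: maps out of the free module on one generator are computed by evaluating at the generator. Concretely, in the cobar picture, $C^\bullet(A^m,A,N)$ has the form $[n]\mapsto\Hom(A^{\otimes n}\otimes A,N)$, which is the cobar resolution of $N$ as a left module; its totalization is $N$ because the cosimplicial object is split (there is an extra codegeneracy coming from the unit of $A$, exhibiting a contracting cosimplicial homotopy). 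So $\on{Tot}\,C^\bullet(A^m,A,N)\simeq N$, and chaining the equivalences gives the claim.

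The main obstacle I expect is the bookkeeping around cofibrancy and rectification: strictly speaking $A$ is only an $\oper{E}_1$-algebra, not an associative algebra, so to apply the cobar/totalization argument one must either pass through the weak equivalence $U_A^{\mathscr{L}}\to$ (strict associative model) or argue directly with the functor $F(A^m,A,N)$ on $\cat{Ass}^{-+}$. The cleanest route is probably to note that $A^m$, as an $\mathscr{L}\oMod$-algebra, corresponds under the equivalence $\mathscr{L}\Mod_A\simeq\Mod_{U_A^{\mathscr{L}}}$ to the free rank-one $U_A^{\mathscr{L}}$-module, i.e. $U_A^{\mathscr{L}}$ itself; since $U_A^{\mathscr{L}}$ is a cofibrant associative algebra (as $A$ is cofibrant and the coend $\mathscr{L}\otimes_{\cat{E}_1}A$ is built from a cofibrant object), $U_A^{\mathscr{L}}$ is cofibrant as a left module over itself, so the previous proposition applies and $\R\Hom_A(A^m,N)\simeq\R\Hom_{U_A^{\mathscr{L}}}(U_A^{\mathscr{L}},N)\simeq N$. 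Everything else is the split-cosimplicial-object argument, which is routine.
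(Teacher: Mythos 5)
Your argument is correct and follows essentially the same route as the paper: reduce to a strictly associative situation and then use that derived homomorphisms out of the free rank-one left module recover the target module (via the split cobar construction / free--forgetful adjunction). The only cosmetic difference is that you strictify by passing to the enveloping algebra $U_A^{\mathscr{L}}$ through the equivalence $\mathscr{L}\Mod_A\simeq\Mod_{U_A^{\mathscr{L}}}$, whereas the paper invokes the weak equivalence of operads $\mathscr{L}\oMod\simeq L\oMod$ to replace the pair $(A,N)$ by a strict pair $(A',N')$ before applying the comparison proposition; these are interchangeable devices.
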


\begin{proof}
The pair $(A,N)$ forms an algebra over $\mathscr{L}\oMod$. The operad $\mathscr{L}\oMod$ is weakly equivalent to the operad $L\oMod$ parameterizing strictly associative algebras and left modules. This implies that we can find a pair $(A',N')$ consisting of an associative algebra and a left module together with a weak equivalence of $\mathscr{L}\oMod$-algebra
\[(A,N)\goto{\simeq}(A',N')\]
Using the previous proposition, we have
\[\R\Hom_A^{[0,1]}(A^m,N)\simeq\R\Hom_{A'}(A',N')\simeq N'\simeq N\]
\end{proof}

Let $\cat{D}([0,1])$ be the poset of open sets of $[0,1]$ that are diffeomorphic to $[0,1)\sqcup (0,1)^{\sqcup n}\sqcup (0,1]$ for some $n$. Let us choose a functor
\[\delta:\cat{D}([0,1])\to \Disk^{-+}\]
by picking a parametrization of each object of $\cat{D}([0,1])$.

\begin{prop}\label{hom as a holim}
There is a weak equivalence
\[\R\Hom^{[0,1]}_A(M,N)\simeq \on{holim}_{U\in\cat{D}([0,1])\op}\mathscr{F}(M,A,N)(\delta U)\]
\end{prop}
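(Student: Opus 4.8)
The plan is to identify the homotopy end defining $\R\Hom^{[0,1]}_A(M,N)$ with a homotopy limit over the poset $\cat{D}([0,1])$ by comparing the two indexing categories through the parametrization functor $\delta$. The key point is that $\delta:\cat{D}([0,1])\to\Disk^{-+}$ is (up to homotopy) homotopy cofinal when restricted appropriately, or more precisely that the end over $(\Disk^{-+})\op$ of the relevant functor agrees with the limit over $\cat{D}([0,1])\op$ of its restriction. First I would recall that by \ref{Hom over cylinder} the left-hand side is $\R\hom_{(\Disk^{-+})\op}(\Emb^{-+}_f(-,[0,1]),\mathscr{F}(QM,A,RN))$. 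Since a homotopy end $\R\hom_{\cat{A}}(F,G)$ can be rewritten as a homotopy limit of $G$ over the category of elements (or Grothendieck construction) $\int F$ of $F$, the left-hand side becomes $\on{holim}$ of $\mathscr{F}$ over the category whose objects are pairs $(U\in \Disk^{-+},\,\iota\in \Emb^{-+}_f(U,[0,1]))$.

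Next I would observe that $\cat{D}([0,1])$ is exactly the full subposet of that category of elements spanned by the pairs where $\iota$ is an actual inclusion of an open subset of $[0,1]$ (with $\delta$ recording the chosen parametrization). So the real work is a cofinality statement: the inclusion $\cat{D}([0,1])\op \hookrightarrow \left(\int \Emb^{-+}_f(-,[0,1])\right)\op$ is homotopy initial (equivalently $\cat{D}([0,1])\hookrightarrow \int \Emb^{-+}_f(-,[0,1])$ is homotopy cofinal). To prove this I would use Quillen's Theorem A (in its $\infty$-categorical / homotopy-cofinality form): for each object $(U,\iota)$ of $\int \Emb^{-+}_f(-,[0,1])$, I must show the comma category / slice $\cat{D}([0,1])_{/(U,\iota)}$ is weakly contractible. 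The embedding $\iota:U\hookrightarrow[0,1]$ has contractible space of "thickenings" to honest open subsets of $[0,1]$ — this is a standard isotopy-extension / convexity argument, essentially \ref{framed embeddings of disks} applied fiberwise together with the fact that the poset of open disk-neighborhoods of a compact submanifold (here, of $\iota(U)$) is filtered, hence contractible as a poset. The framing bookkeeping ($S_\tau$ versus $S_{-\tau}$, i.e. the $-$ and $+$ boundary collars) is harmless because, in dimension one, the space of framed embeddings is homotopy discrete and $\delta$ is chosen to respect it.

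With the cofinality in hand, the homotopy limit over the category of elements restricts to a weakly equivalent homotopy limit over $\cat{D}([0,1])\op$, namely $\on{holim}_{U\in\cat{D}([0,1])\op}\mathscr{F}(QM,A,RN)(\delta U)$, and since $QM\to M$ and $N\to RN$ are the (co)fibrant replacements used to make $\mathscr{F}$ derived, this agrees with $\on{holim}_{U\in\cat{D}([0,1])\op}\mathscr{F}(M,A,N)(\delta U)$ up to weak equivalence. I would also note that the functor $\delta$ is unique up to contractible choice (the space of automorphisms of an interval in $\Disk^{-+}$, and of the collars, being contractible), so the right-hand side is well-defined independently of $\delta$.

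The main obstacle I anticipate is the cofinality verification — specifically, showing each slice $\cat{D}([0,1])_{/(U,\iota)}$ is contractible. One has to be careful that $\cat{D}([0,1])$ uses \emph{honest open subsets} of $[0,1]$ while $\Disk^{-+}$ uses abstract manifolds with embeddings, so the slice is really "the poset of open disjoint-unions-of-intervals $V\subseteq[0,1]$ equipped with a factorization of $\iota$ through $\delta V$". Contractibility follows because one can always shrink such a $V$ towards $\iota(U)$ and the poset of such $V$ is filtered; making this precise is where the one-dimensionality and the collar conditions at $0$ and $1$ matter, and it is the only step requiring genuine (though routine) geometric input rather than formal manipulation of (co)ends and homotopy limits.
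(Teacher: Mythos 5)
Your proposal is correct and takes essentially the same route as the paper: the paper's key input is the decomposition $\Emb_f^{S^0}(-,[0,1])\simeq\on{hocolim}_{U\in\cat{D}([0,1])}\Emb_f^{S^0}(-,U)$ (cited from \cite[Lemma 7.9.]{horelfactorization}) followed by a Yoneda identification of each term with $\mathscr{F}(M,A,N)(\delta U)$, and your cofinality claim for $\cat{D}([0,1])$ inside the category of elements of $\Emb_f^{-+}(-,[0,1])$ is exactly the pointwise reformulation of that decomposition. The only difference is that you sketch a direct Quillen's Theorem A argument for this geometric input where the paper cites it.
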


\begin{proof}
We can assume that $M$ is cofibrant and $N$ is fibrant. First, we have the equivalence
\[\R\Hom^{[0,1]}_A(M,N)\simeq \on{holim}_{U\in\cat{D}([0,1])\op}\R\Hom_A^{\delta U}(M,N)\]
which follows easily from the following equivalence in the category of contravariant functors on $\Disk^{-+}$:
\[\Emb_f^{S^0}(-,[0,1])\simeq\on{hocolim}_{U\in\cat{D}([0,1])}\Emb_f^{S^0}(-,U)\]
which is proved in \cite[Lemma 7.9.]{horelfactorization}.

Then we notice, using Yoneda's lemma, that $U\mapsto\R\Hom^{\delta U}_A(M,N)$ is weakly equivalent as a functor to $U\mapsto \mathscr{F}(M,A,N)(\delta U)$.
\end{proof}

\subsection*{Comparison with the actual homomorphisms}

In this subsection, $A$ is a cofibrant $\oper{E}_d$-algebra. We want to compare $\R\Hom^{S\times[0,1]}_A(M,N)$ with $\R\Hom_{S_\tau\Mod_A}(M,N)$.

\begin{cons}
Let $M$ be an $S_\tau$-shaped module over an $\oper{E}_d$-algebra $A$. We give $M$ the structure of a left module over the $\oper{E}_1$-algebra $\int_{S\times(0,1)}A$. Let
\[ (0,1)^{\sqcup n}\sqcup[0,1)\to [0,1)\]
be a framed embedding. We can take the product with $S$ and get an embedding in $f\Man_d^{S_\tau}$
\[(S\times(0,1))^{\sqcup n}\sqcup S\times [0,1) \to S\times [0,1)\]
Evaluating $\int_{-}(M,A)$ over this embedding, we find a map
\[(\int_{S\times(0,1)}A)^{\otimes n}\otimes M\to M\]

All these maps give $M$ the structure of a left $\int_{S\times(0,1)}A$-module.
\end{cons}

\begin{prop}
Let $M$ and $N$ be two $S_{\tau}$-shaped modules over $A$. There is a weak equivalence
\[\R\Hom^{S\times[0,1]}_A(M,N)\simeq\on{holim}_{U\in\cat{D}([0,1])\op}\mathscr{F}(M,\int_{S\times(0,1)}A,N)(S\times U)\]
where $M$ and $N$ are given the structure of left $\int_{S\times(0,1)}A$-modules using the previous construction.
\end{prop}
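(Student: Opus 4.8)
The statement to prove relates the homotopy end defining $\R\Hom^{S\times[0,1]}_A(M,N)$ over the category $(\cat{Disk}_d^{S_\tau\sqcup S_{-\tau}})\op$ to a homotopy limit over the poset $\cat{D}([0,1])\op$ of a version of the Loday functor built out of the $\oper{E}_1$-algebra $\int_{S\times(0,1)}A$. The strategy mirrors the proof of Proposition~\ref{hom as a holim} in the one-dimensional case, but now using the ``product with $S$'' functor to go from $1$-manifolds to $S_\tau$-manifolds. First I would recall that, exactly as in the proof of \ref{hom as a holim}, one has an equivalence in the category of contravariant $\S$-valued functors on $\cat{Disk}_d^{S_\tau\sqcup S_{-\tau}}$
\[\Emb_f^{S_\tau\sqcup S_{-\tau}}(-,S\times[0,1])\simeq\on{hocolim}_{U\in\cat{D}([0,1])}\Emb_f^{S_\tau\sqcup S_{-\tau}}(-,S\times U),\]
which follows from the collar/handle decomposition of $[0,1]$ used in \cite[Lemma 7.9.]{horelfactorization} after taking the product with $S$ (the relevant embedding spaces only see the $[0,1]$-direction up to the contractible choice of framing, so the cofinality/hocolim argument goes through unchanged). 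Feeding this into the homotopy end and commuting $\R\hom$ past the homotopy colimit in its first variable turns $\R\Hom^{S\times[0,1]}_A(M,N)$ into
\[\on{holim}_{U\in\cat{D}([0,1])\op}\R\hom_{(\cat{Disk}_d^{S_\tau\sqcup S_{-\tau}})\op}\bigl(\Emb_f^{S_\tau\sqcup S_{-\tau}}(-,S\times U),\mathscr{F}(QM,A,RN)\bigr),\]
and by Yoneda this inner term is weakly equivalent to $\mathscr{F}(M,A,N)$ evaluated on a chosen parametrization of $S\times U$.

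**Identifying the $\int_{S\times(0,1)}A$-module structure.** The real content is then to identify, for each $U\in\cat{D}([0,1])$ of the form $[0,1)\sqcup(0,1)^{\sqcup k}\sqcup(0,1]$, the value $\mathscr{F}(M,A,N)(S\times U)$ together with its functoriality in $U$ with the value $\mathscr{F}(M,\int_{S\times(0,1)}A,N)(S\times U)$. On objects both sides are of the form $\Hom(M\otimes X^{\otimes k},N)$; on the left $X=A$ whereas on the right $X=\int_{S\times(0,1)}A$. So the two functors do \emph{not} agree termwise, and one cannot simply invoke Yoneda. Instead I would argue that, restricted to the subposet $\cat{D}([0,1])$ and after taking the homotopy limit, they become equivalent: the point is that the homotopy limit over $\cat{D}([0,1])\op$ of $\mathscr{F}(M,A,N)(S\times-)$ only depends on $A$ through the collection of factorization-homology spectra $\int_{S\times V}A$ for $V$ a disjoint union of disks, and these are precisely the tensor powers of $\int_{S\times(0,1)}A$ by \ref{left}. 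More precisely, I would factor the functor $\cat{D}([0,1])\to\cat{Disk}_d^{S_\tau\sqcup S_{-\tau}}$ (product with $S$) through the category governing left modules over $\int_{S\times(0,1)}A$, i.e. through $\cat{Disk}_1^{-+}$ after applying $\int_{S\times-}A$: an object $U = [0,1)\sqcup(0,1)^{\sqcup k}\sqcup(0,1]$ maps to the same combinatorial object, and a morphism $\phi$ in $\cat{D}([0,1])$, decomposed (as in \ref{Loday functor}) into a piece acting on the left collar, a piece acting on the right collar, and pieces $\psi_i$ permuting/nesting the interior intervals, acts on $\mathscr{F}(M,\int_{S\times(0,1)}A,N)$ exactly through the left $\int_{S\times(0,1)}A$-module structures on $M$, on $N$, and on $\Hom(M\otimes(\int_{S\times(0,1)}A)^{\otimes p},N)$ that were set up in the preceding construction. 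One then checks this assignment is the same natural transformation of functors on $\cat{D}([0,1])\op$ as the one induced by $\mathscr{F}(M,A,N)(S\times-)$, using the cofinality isomorphism
\[\Emb^{S_{-\tau}}_f(S\times(0,1]\sqcup D^{\sqcup n},S\times(0,1])\cong\Emb^{S_\tau}_f(S\times[0,1)\sqcup D^{\sqcup n},S\times[0,1))\]
and its analogue for the left collar to match up the ``reflection'' move in \ref{Loday functor} with the left-module action on $\Hom(M\otimes A^{\otimes p},N)$.

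**The main obstacle.** The genuinely delicate step is the last one: verifying that the Loday functor built from $A$ and the one built from $\int_{S\times(0,1)}A$ literally agree as functors on $\cat{D}([0,1])\op$ (not merely levelwise, which is false, but after restricting the indexing category from $\cat{Disk}_d^{S_\tau\sqcup S_{-\tau}}$ to the image of $\cat{D}([0,1])$). This amounts to showing that once the indexing manifolds are all of the form $S\times U$ with $U\subset[0,1]$, the spectrum $\int_{S\times V}A$ attached to the interior disks $V$ absorbs all the $\oper{E}_d$-structure of $A$ into the $\oper{E}_1$-structure of $\int_{S\times(0,1)}A$ — concretely that the maps $\Emb_f^{S_\tau}(S\times[0,1)\sqcup D^{\sqcup k},S\times[0,1))\to\Map_{\cat{C}}(M\otimes A^{\otimes k},M)$ factor, compatibly with composition, through $\Emb_f^{-+}([0,1)\sqcup(0,1)^{\sqcup k},[0,1))\to\Map_{\cat{C}}(M\otimes(\int_{S\times(0,1)}A)^{\otimes k},M)$. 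But this is precisely the content of the construction preceding the proposition (giving $M$ and $N$ their left $\int_{S\times(0,1)}A$-module structures) together with \ref{equivalence of S shaped module}, which says the $S_\tau$-shaped module structure and the $\int_{S\times(0,1)}A$-module structure are compatible. So the proof is really an exercise in unwinding definitions and invoking \ref{left}, \ref{equivalence of S shaped module}, and the cofinality lemma \cite[Lemma 7.9.]{horelfactorization}; the bookkeeping of the three types of morphisms and the reflection move is the part that requires care.
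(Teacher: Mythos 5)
Your proof follows the same two-step route as the paper's: first decompose $\Emb_f^{S_\tau\sqcup S_{-\tau}}(-,S\times[0,1])$ as $\on{hocolim}_{U\in\cat{D}([0,1])}\Emb_f^{S_\tau\sqcup S_{-\tau}}(-,S\times U)$ to rewrite the derived hom as $\on{holim}_{U\in\cat{D}([0,1])\op}\R\Hom_A^{S\times U}(M,N)$, then identify the resulting functor of $U$ with $U\mapsto\mathscr{F}(M,\int_{S\times(0,1)}A,N)(U)$ by a Yoneda argument, which is exactly how \ref{hom as a holim} is adapted in the paper. One intermediate sentence is wrong, although you retract it a paragraph later: the inner term is \emph{not} ``$\mathscr{F}(M,A,N)$ evaluated on a chosen parametrization of $S\times U$,'' because for $U$ with $k\geq 1$ interior intervals the manifold $S\times U$ has interior components diffeomorphic to $S\times(0,1)$, which is not an object of $\cat{Disk}_d^{S_\tau\sqcup S_{-\tau}}$, so Yoneda cannot be applied there; the correct identification, which you do eventually reach, is that the end over the $d$-dimensional disk category yields $\Hom(M\otimes(\int_{S\times(0,1)}A)^{\otimes k},N)$ on such a $U$, with the Yoneda step taking place one dimension down for the functor $\mathscr{F}(M,\int_{S\times(0,1)}A,N)$ on $\cat{Disk}_1^{-+}$, matching the paper.
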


\begin{proof}
This is a straightforward variant of \ref{hom as a holim}. One first proves that 
\[\R\Hom_A^{S\times[0,1]}(M,N)\simeq\on{holim}_{U\in\cat{D}([0,1])\op}\R\Hom_A^{S\times U}(M,N)\]
which follows from the following equivalence in the category $\on{Fun}((\Disk^{S_\tau\sqcup S_{-\tau}})\op,\S)$:
\[\on{hocolim}_{U\in\cat{D}([0,1])}\Emb_f^{S_\tau\sqcup S_{-\tau}}(-,S\times U)\simeq\Emb_f^{S_\tau\sqcup S_{-\tau}}(-,S\times [0,1])\]
and then, using Yoneda's lemma it is easy to check that the functor
\[U\mapsto\R\Hom_A^{S\times U}(M,N)\]
is weakly equivalent to
\[U\mapsto \mathscr{F}(M,\int_{S\times(0,1)}A,N)( U)\]
\end{proof}

\begin{coro}\label{equivalence}
There is a weak equivalence
\[\R\Hom_{\int_{S\times(0,1)}A}^{[0,1]}(M,N)\simeq \R\Hom_A^{S\times[0,1]}(M,N)\]
\end{coro}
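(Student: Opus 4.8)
The strategy is to show that both sides of the claimed equivalence are, up to weak equivalence, homotopy limits of the \emph{same} diagram indexed by the opposite of the poset $\cat{D}([0,1])$, after which the result is immediate. The crucial observation is that the two functors $\R\Hom^{[0,1]}_{\int_{S\times(0,1)}A}(-,-)$ and $\R\Hom_A^{S\times[0,1]}(-,-)$ are each governed by the same combinatorial shape, namely decompositions of the interval $[0,1]$ into pieces of the form $[0,1)\sqcup(0,1)^{\sqcup n}\sqcup(0,1]$, and the only difference between them is whether one plugs in the $\oper{E}_1$-algebra $\int_{S\times(0,1)}A$ acting on left modules directly, or whether one takes the $S_\tau$-shaped module structure over $A$ and crosses everything with $S$.

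First I would invoke \ref{hom as a holim}, applied to the $\oper{E}_1$-algebra $\int_{S\times(0,1)}A$ together with the left modules $M$ and $N$ (whose left-module structures over $\int_{S\times(0,1)}A$ are the ones constructed just above). This gives
\[\R\Hom^{[0,1]}_{\int_{S\times(0,1)}A}(M,N)\simeq\on{holim}_{U\in\cat{D}([0,1])\op}\mathscr{F}(M,\textstyle\int_{S\times(0,1)}A,N)(\delta U).\]
Next, the immediately preceding proposition identifies the right-hand side of the statement with exactly the same homotopy limit:
\[\R\Hom_A^{S\times[0,1]}(M,N)\simeq\on{holim}_{U\in\cat{D}([0,1])\op}\mathscr{F}(M,\textstyle\int_{S\times(0,1)}A,N)(S\times U).\]
Since $\delta U$ is by definition a choice of parametrization of $U$ as an abstract disjoint union of intervals, and $S\times U$ is the same abstract object (the functor $\mathscr{F}$ only sees the number of interior disk components), these two diagrams over $\cat{D}([0,1])\op$ agree, so their homotopy limits agree and we conclude.

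The one point requiring a little care — and the place I would expect the main subtlety to lie — is the compatibility of the two left $\int_{S\times(0,1)}A$-module structures on $M$ and $N$ appearing in the two displayed formulas: in \ref{hom as a holim} the left-module structure is the abstract one fed into the statement, while in the preceding proposition it arises geometrically by crossing embeddings $(0,1)^{\sqcup n}\sqcup[0,1)\to[0,1)$ with $S$ and evaluating $\int_{-}(M,A)$. One must check that these coincide, but this is exactly the content of the construction preceding the proposition (which \emph{defines} the left $\int_{S\times(0,1)}A$-module structure used there via that product-with-$S$ recipe), so the two formulas are literally about the same data. Everything else is formal: the functoriality conventions in \ref{Loday functor}, the Reedy-fibrancy of the relevant cobar-type cosimplicial objects, and the fact that $-\otimes_{\cat{E}_d}A$ preserves weak equivalences of right $\oper{E}_d$-modules on cofibrant algebras, have all been recorded earlier. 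Hence the corollary follows by transitivity of the two weak equivalences through the common homotopy limit.
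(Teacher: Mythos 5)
Your argument is exactly the one in the paper: both sides are identified with the homotopy limit of $\mathscr{F}(M,\int_{S\times(0,1)}A,N)$ over $\cat{D}([0,1])\op$, one via \ref{hom as a holim} and the other via the preceding proposition, and your added remark about the two left-module structures coinciding is the right point to flag. No issues.
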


\begin{proof}
Both sides are weakly equivalent to
\[\on{holim}_{U\in\cat{D}([0,1])\op}\mathscr{F}(M,\int_{S\times(0,1)}A,N)(S\times U)\]
One side by the previous proposition and the other by \ref{hom as a holim}.
\end{proof}

\subsection*{Proof of \ref{right hom}}

\begin{proof}
If we fix $A$ and a fibrant $S_\tau$-shaped module $N$ and let $M$ vary, we want to compare two functors from $S_{\tau}\Mod_A$ to $\cat{C}$. Both functors preserve weak equivalences between cofibrant objects and turn homotopy colimits into homotopy limits, therefore, it suffices to check that both functors are weakly equivalent on the generator of the category of $S_\tau$-shaped modules. In other word, it is enough to prove that
\[\R\Hom^{S\times[0,1]}_A(U_A^{S_\tau},N)\simeq\R\Hom_{S_\tau\Mod_A}(U_A^{S_\tau},N)\]

The right hand side of the above equation can be rewritten as $\R\Hom_{U_A^{S_\tau}}(U_A^{S_\tau},N)$ which is trivially weakly equivalent to $N$.

We know from \ref{equivalence of S shaped module} that as $S_\tau$-shaped modules, there is a weak equivalence
\[U_A^{S_\tau}\to\int_{S\times(0,1)}A\]

Therefore, it is enough to prove that there is a weak equivalence
\[\R\Hom^{S\times[0,1]}_A(\int_{S\times(0,1)}A,N)\simeq N\]

According to \ref{equivalence}, it is equivalent to prove that there is a weak equivalence:
\[\R\Hom^{[0,1]}_{\int_{S\times[0,1]}A}(\int_{S\times(0,1)}A,N)\simeq N\]
which follows directly from \ref{E_1 vs Ass}.
\end{proof}

\subsection*{A generalization}\label{subsection generalization}

We can generalize the definition \ref{Hom over cylinder}. In \cite[Construction 6.9.]{horeloperads}, given the data of a framed bordism $W$ between $d$-framed manifold of dimension $d-1$ $S_\sigma$ and $T_\tau$, we construct a left Quillen functor
\[P_W:S_{\sigma}\Mod_A\to T_{\tau}\Mod_A\]

The best way to think of this functor is as follows. Factorization homology of $A$ over $W$ is a $U_A^{S_\sigma}$-$U_A^{T_\tau}$-bimodule. Thus tensoring with it induces a left Quillen functor
\[S_\sigma\Mod_A\to T_\tau\Mod_A\]

\begin{cons}\label{Hom over a bordism}
Let $W$ be bordism from $S_\sigma$ to $T_\tau$. Let $M$ be an $S_\sigma$-shaped module over $A$ and $N$ be a $T_{\tau}$-shaped module. We can construct a functor as in \ref{Loday functor} $\mathscr{F}(M,A,N)$ from $(\Disk^{S_{\sigma}\sqcup T_{-\tau}})\op$ to $\cat{C}$  which sends $S\times[0,1)\sqcup D^{\sqcup n}\sqcup T\times(0,1]$ to $\Hom( A^{\otimes n}\otimes M,N)$. We define $\R\Hom^{W}_A(M,N)$ to be the homotopy end
\[\R\Hom^{W}_A(M,N)=\R\hom_{(\Disk^{S_{\sigma}\sqcup T_{-\tau}})\op}(\Emb^{S_\sigma\sqcup T_{-\tau}}_f(-,W),\mathscr{F}(M,A,N))\]
\end{cons}

This construction has the following nice interpretation:

\begin{theo}\label{what is Hom over a bordism}
Let $W$ be a bordism from $S_\sigma$ to $T_\tau$. There is a weak equivalence:
\[\R\Hom_A^{W}(M,N)\simeq \R\Hom_A^{T\times[0,1]}(\L P_W(M),N)\]
\end{theo}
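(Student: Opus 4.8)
The plan is to reduce the statement to the already-established Corollary \ref{equivalence} together with the fact, recalled in Subsection \ref{subsection generalization}, that $P_W$ is modeled by tensoring with the $U_A^{S_\sigma}$-$U_A^{T_\tau}$-bimodule $\int_W A$. First I would unwind the right-hand side: by \ref{equivalence} (or rather its evident variant with the bordism $T\times[0,1]$ in place of the cylinder over a single manifold), we have
\[\R\Hom_A^{T\times[0,1]}(\L P_W(M),N)\simeq\R\Hom^{[0,1]}_{\int_{T\times(0,1)}A}(\L P_W(M),N),\]
so that both sides are homomorphisms between left modules over the $\oper{E}_1$-algebra $U_A^{T_\tau}\simeq\int_{T\times(0,1)}A$. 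On the other side, the same argument that proves \ref{hom as a holim} and its $S_\tau$-variant shows that
\[\R\Hom_A^{W}(M,N)\simeq\on{holim}_{U\in\cat{D}([0,1])\op}\mathscr{F}\!\left(M,\textstyle\int_{T\times(0,1)}A,N\right)(W_U),\]
where $W_U\subset W$ ranges over a cofinal family of sub-bordisms of the form (collar of $S$) $\sqcup$ (disks) $\sqcup$ (collar of $T$) obtained by thickening a disk configuration in a collar neighborhood $T\times[0,1]\subset W$; the key geometric input is the cofinality statement
\[\on{hocolim}_{U\in\cat{D}([0,1])}\Emb_f^{S_\sigma\sqcup T_{-\tau}}(-,W_U)\simeq\Emb_f^{S_\sigma\sqcup T_{-\tau}}(-,W),\]
which holds because any framed embedding of a compact disjoint union of disks and collars into $W$ factors, up to a contractible choice, through such a $W_U$ once one has fixed a collar of the incoming boundary $T$.

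The heart of the matter is then to identify the functor $U\mapsto\mathscr{F}(M,\int_{T\times(0,1)}A,N)(W_U)$ with the cobar construction computing $\R\Hom^{[0,1]}_{\int_{T\times(0,1)}A}(\L P_W(M),N)$. Here I would argue that the left $\int_{T\times(0,1)}A$-module underlying $\L P_W(M)$ is, by the very definition of $P_W$ via factorization homology over $W$, the relative tensor product $\int_W A\otimes_{U_A^{S_\sigma}}M$ (one can arrange $M$ cofibrant, so no derived-tensor subtlety arises beyond what is already built into $P_W$ being left Quillen). Plugging this into the cobar complex $C^\bullet$ for left modules over $\int_{T\times(0,1)}A$ and using the adjunction between $-\otimes_{U_A^{S_\sigma}}$ and restriction, one rewrites $\Hom\!\big((\int_{T\times(0,1)}A)^{\otimes k}\otimes\int_W A\otimes_{U_A^{S_\sigma}}M,\,N\big)\simeq\Hom\!\big((\int_{T\times(0,1)}A)^{\otimes k}\otimes M',\,N\big)$ where $M'$ has the evident $S_\sigma$-action; matching this degreewise with $\mathscr{F}(M,A,N)(W_U)$ is then the same bookkeeping computation as in the proof that $F(M,A,N)\circ(\Delta\to\cat{Ass}^{-+})$ is the cobar construction, only carried out over the bordism $W$ rather than the interval.

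The step I expect to be the main obstacle is the geometric cofinality claim, i.e. verifying that disk configurations confined to a collar of the outgoing boundary $T$ form a homotopy-cofinal sub-poset of $\cat{D}(W)$ with the property that the corresponding $\Emb_f^{S_\sigma\sqcup T_{-\tau}}$-presheaves assemble (via hocolim) to the full $\Emb_f^{S_\sigma\sqcup T_{-\tau}}(-,W)$. Morally this is the statement that $\R\Hom$ over a bordism only sees a collar of the target boundary — which is exactly the content of $\R\Hom_A^W$ being computed by a left-module homomorphism complex — but making it precise requires a careful isotopy-extension argument of the type used in \cite[Lemma 7.9]{horelfactorization} and \cite[Proposition A.3]{horeloperads}, adapted to bordisms with two marked boundary components and to the $S_\sigma\sqcup T_{-\tau}$-framed embedding spaces. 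Once this is in hand, the remaining identifications are formal manipulations with ends, adjunctions, and cobar constructions, all of which parallel arguments already carried out in Section 3.
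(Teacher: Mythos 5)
Your overall strategy --- use \ref{equivalence} to rewrite the right-hand side as left-module homomorphisms over the $\oper{E}_1$-algebra $\int_{T\times(0,1)}A$, and identify the left-hand side with the corresponding cobar construction on $\L P_W(M)\simeq\int_WA\otimes_{U_A^{S_\sigma}}M$ --- is a reasonable way to flesh out the paper's one-line proof (which simply says the argument is analogous to that of \ref{right hom}, i.e.\ one may instead reduce to the generator $M=U_A^{S_\sigma}$ of $S_\sigma\Mod_A$ and avoid identifying $\L P_W(M)$ for general $M$). But the step you yourself single out as the crux is misstated, and as stated it does not do the job. Your family of sub-bordisms $W_U$ of the form $(\text{collar of }S)\sqcup(\text{disks in a fixed collar of }T)\sqcup(\text{collar of }T)$ is wrong on two counts. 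First, the middle pieces must be thickened copies of the boundary, $T\times(0,1)$, not disks: disks contribute tensor factors of $A$ to $\mathscr{F}$, whereas the cobar construction you are aiming for needs tensor factors of $\int_{T\times(0,1)}A$ (compare the cylinder case in the proof of \ref{hom as a holim} and its $S_\tau$-variant, where the middle pieces are $S\times(0,1)$). Second, the first piece must be a shrunk copy of all of $W$ (i.e.\ $W$ minus its outgoing boundary), not a collar of $S$: it is precisely this piece that contributes $\L P_W(M)=\int_WA\otimes_{U_A^{S_\sigma}}M$ to the functor whose homotopy limit you take, and a family confined to a neighborhood of $\partial W$ cannot recover $\Emb_f^{S_\sigma\sqcup T_{-\tau}}(-,W)$ as a homotopy colimit when $W$ has interior topology (already the configuration space of one point detects this). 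Your own displayed formula $\on{holim}_U\,\mathscr{F}(M,\int_{T\times(0,1)}A,N)(W_U)$ betrays the problem, since $M$ is a left $U_A^{S_\sigma}$-module, not a left $\int_{T\times(0,1)}A$-module, so the functor as written is not defined; the correct first argument is $\L P_W(M)$.

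A second, smaller issue: the passage from $\Hom\bigl((\int_{T\times(0,1)}A)^{\otimes k}\otimes\int_WA\otimes_{U_A^{S_\sigma}}M,\,N\bigr)$ to $\Hom\bigl((\int_{T\times(0,1)}A)^{\otimes k}\otimes M',\,N\bigr)$ is not an instance of the extension-of-scalars/restriction adjunction --- that adjunction identifies module homomorphisms, not underlying mapping objects carrying extra tensor factors --- and it is also unnecessary. Once the decomposition of $W$ is set up with the correct pieces, the cosimplicial object obtained from $\cat{D}([0,1])$ is literally the cobar construction $C^{\bullet}\bigl(\L P_W(M),\int_{T\times(0,1)}A,N\bigr)$, and one concludes by the comparison results for $\oper{E}_1$-algebras in Section 3 without further rewriting.
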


\begin{proof}
The proof is very analogous to the proof of \ref{right hom}.
\end{proof}

We now introduce the definition of higher Hochschild cohomology.

\begin{defi}\label{defi-Higher Hochschild cohomology}
Let $A$ be a cofibrant $\oper{E}_d$-algebra in $\cat{C}$ and $M$ be a $S^{d-1}_\kappa$-shaped module over $A$. The \emph{$\oper{E}_d$-Hochschild cohomology} of $A$ with coefficients in $M$ is
\[\HC{\oper{E}_d}(A,M)=\R\Hom_{S^{d-1}_{\kappa}\Mod_A}(A,M)\]
\end{defi}

Now, we compare this definition to a more traditional definition. Let $A$ be a cofibrant $\oper{E}_d$-algebra and $M$ be an object of $\Mod_A^{\oper{E}_d}$. By \ref{equivalence of modules}, we can see $M$ as a $S^{d-1}_\kappa$-shaped module over $A$. 

\begin{prop}\label{equivalence of the two definitions}
For $A$ a cofibrant $\oper{E}_d$-algebra and $M$ an object of $\Mod_A^{\oper{E}_d}$, we have a weak equivalence
\[\R\Hom_{\Mod_A^{\oper{E}_d}}(A,M)\simeq \R\Hom_{S_{\kappa}\Mod_A}(A,M)\]
\end{prop}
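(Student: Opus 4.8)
The plan is to leverage Theorem \ref{equivalence of modules}, which provides a Quillen equivalence $S_\kappa\Mod_A \leftrightarrows \Mod_A^{\oper{E}_d}$ whose right adjoint commutes with the forgetful functors to $\cat{C}$. Since derived mapping objects are invariant under Quillen equivalences, the crux is to check that the object $A$, viewed on the two sides of this equivalence, is matched up correctly. On the $S_\kappa\Mod_A$ side, $A$ carries the tautological $S^{d-1}_\kappa$-shaped module structure in which $A$ acts on itself; on the $\Mod_A^{\oper{E}_d}$ side, $A$ carries the tautological operadic $\oper{E}_d$-module structure. I would first record that both of these are instances of the ``free/tautological'' module structure, corresponding under the equivalence of enveloping algebras $U_A^{S^{d-1}_\kappa}\to U_A^{\oper{E}_d[1]}$ from the proof of \ref{equivalence of modules} to the respective enveloping algebras acting on themselves.

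The key steps, in order, are: (i) reduce the statement to an identity of derived mapping objects over the enveloping algebras, using \ref{equivalence of modules} to write $\R\Hom_{\Mod_A^{\oper{E}_d}}(A,M)\simeq \R\Hom_{\Mod_{U_A^{\oper{E}_d[1]}}}(A, M)$ and $\R\Hom_{S_\kappa\Mod_A}(A,M)\simeq \R\Hom_{\Mod_{U_A^{S^{d-1}_\kappa}}}(A,M)$; (ii) observe that restriction of scalars along the weak equivalence of associative algebras $U_A^{S^{d-1}_\kappa}\to U_A^{\oper{E}_d[1]}$ is a Quillen equivalence of module categories which, on derived mapping objects, identifies $\R\Hom_{\Mod_{U_A^{\oper{E}_d[1]}}}(X,Y)$ with $\R\Hom_{\Mod_{U_A^{S^{d-1}_\kappa}}}(X,Y)$ for any $X,Y$; (iii) check that under this identification the tautological operadic $\oper{E}_d$-module $A$ corresponds to the tautological $S^{d-1}_\kappa$-shaped module $A$ — equivalently, that the underlying map $A\to A$ in $\cat{C}$ induced by the equivalence is the identity (here one uses the second sentence of \ref{equivalence of modules}, that the right adjoint commutes with the forgetful functors, so the comparison is compatible with the structure map from $A$); (iv) conclude by combining (i)–(iii), possibly passing through cofibrant/fibrant replacements so that all the $\R\Hom$'s are computed by honest mapping objects.

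The main obstacle I anticipate is step (iii): verifying that the distinguished object ``$A$ as a module over itself'' genuinely corresponds on both sides, rather than merely to something abstractly weakly equivalent to $A$. The subtlety is that the left adjoint of the Quillen equivalence need not send the tautological $S^{d-1}_\kappa$-module to the tautological operadic module on the nose; one should instead use the right adjoint (which does commute with the forgetful functors) and compare the two tautological structures after noting that each is the ``free module on the unit'' in its respective category, hence each is sent by the relevant enveloping-algebra equivalence to $U$ acting on itself. Once this bookkeeping is done, everything else is formal invariance of derived mapping spaces under Quillen equivalences. I would keep this argument short, citing \cite{horeloperads} for the statement that these tautological modules correspond, since the real content is already in the construction of the equivalence there.
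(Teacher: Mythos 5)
Your overall route is the paper's: Theorem \ref{equivalence of modules} supplies the Quillen equivalence $u_!\colon S^{d-1}_\kappa\Mod_A\leftrightarrows\Mod_A^{\oper{E}_d}\colon u^*$, and the proposition follows from the counit equivalence $\L u_!u^*A\to A$ together with the derived adjunction isomorphism $\R\Hom_{\Mod_A^{\oper{E}_d}}(\L u_!u^*A,M)\simeq\R\Hom_{S_\kappa\Mod_A}(u^*A,u^*M)$. Recasting this as restriction of scalars along the weak equivalence of enveloping algebras $U_A^{S^{d-1}_\kappa}\to U_A^{\oper{E}_d[1]}$, as in your steps (i)--(ii), is the same argument in different clothing.

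Your resolution of step (iii), however, is wrong as stated: $A$ with its tautological module structure over itself is \emph{not} the ``free module on the unit'' in either category. The free left module on $\un$ over the enveloping algebra is the enveloping algebra itself, i.e. $U_A^{S^{d-1}_\kappa}\simeq\int_{S^{d-1}\times(0,1)}A$ by \ref{coro-equivalence enveloping factorization}, which is not equivalent to $A$ in general (already for $d=1$ it is $A\otimes A\op$, and the proof of \ref{right hom} explicitly uses $U_A^{S_\tau}$, not $A$, as the generator). So the sentence claiming that each tautological module ``is sent to $U$ acting on itself'' establishes nothing. Fortunately the step is also dispensable: in the paper the $S^{d-1}_\kappa$-shaped structures on $A$ and $M$ appearing on the right-hand side are \emph{defined} to be $u^*$ applied to the operadic structures (see the sentence immediately preceding the proposition), so the matching of the distinguished objects is tautological and the argument reduces to (i)--(ii). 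If one instead insists on an independently defined geometric $S^{d-1}_\kappa$-structure on $A$ (e.g.\ via the bordism $\bar D$), then comparing it with $u^*A$ is a genuine verification to be extracted from the construction of the equivalence in \cite{horeloperads}, not a consequence of freeness.
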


\begin{proof}
By \ref{equivalence of modules}, we have a Quillen equivalence
\[u_!:S^{d-1}_\kappa\Mod_A\leftrightarrows \Mod_A^{\oper{E}_d}:u^*\]
Therefore, we have a weak equivalence $\L u_!u^*A\to A$ in $\Mod_A^{\oper{E}_d}$. This gives us a weak equivalence
\[\R\Hom_{\Mod_A^{\oper{E}_d}}(A,M)\to\R\Hom_{\Mod_A^{\oper{E}_d}}(\L u_!u^*A,M)\simeq \R\Hom_{S_{\kappa}\Mod_A}(u^*A,u^*M)\]
\end{proof}

Thus, our definition of $\HC{\oper{E}_d}(A,M)$ coincides with the more traditional definition we gave in the first paragraph of the introduction. According to \ref{right hom}, we have a weak equivalence $\HC{\oper{E}_d}(A,M)\simeq\R\Hom_{A}^{S^{d-1}\times[0,1]}(A,M)$. As usual, we write $\HC{\oper{E}_d}(A)$ for $\HC{\oper{E}_d}(A,A)$. 

\begin{prop}
Let $\bar{D}$ be the closed unit ball in $\mathbb{R}^d$ seen as a bordism from the empty manifold to $S^{d-1}_\kappa$. There is a weak equivalence:
\[\HC{\oper{E}_d}(A,M)\simeq \R\Hom^{\bar{D}}_A(\un,M)\]
\end{prop}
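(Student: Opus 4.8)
The plan is to recognise $\bar{D}$ as the ``cap'' bordism whose associated functor sends the monoidal unit to $A$ regarded as a module over itself, and then to quote \ref{what is Hom over a bordism} together with \ref{right hom}. First I would note that the empty $(d-1)$-manifold carries a unique $d$-framing and that the associated right $\oper{E}_d$-module $\emptyset$ is the monoidal unit of $\Mod_{\oper{E}_d}$ — a point in arity $0$ and empty in positive arities — so that $U_A^{\emptyset}=\emptyset\otimes_{\cat{E}_d}A=\un$, the category $\emptyset\Mod_A$ is just $\cat{C}$, and $\un$, being the free $\emptyset$-shaped module on the cofibrant object $\un\in\cat{C}$, is cofibrant there. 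Thus the object $\R\Hom^{\bar{D}}_A(\un,M)$ of \ref{Hom over a bordism} makes sense, and \ref{what is Hom over a bordism}, applied to $W=\bar{D}$ seen as a bordism from $\emptyset$ to $S^{d-1}_\kappa$, gives
\[\R\Hom^{\bar{D}}_A(\un,M)\simeq\R\Hom^{S^{d-1}\times[0,1]}_A(\L P_{\bar{D}}(\un),M).\]
Because $P_{\bar{D}}$ is left Quillen and $\un$ is cofibrant, $\L P_{\bar{D}}(\un)\simeq P_{\bar{D}}(\un)$, and since $P_W$ is tensoring with the factorization-homology bimodule $\int_W A$ and here $U_A^{\emptyset}=\un$, this is $\int_{\bar{D}}A$ with its $S^{d-1}_\kappa$-shaped module structure, in which a collar of $\partial\bar{D}$ absorbs little disks.

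The substantive step will be to identify $\int_{\bar{D}}A$, as an $S^{d-1}_\kappa$-shaped module, with $A$ carrying the module structure of \ref{defi-Higher Hochschild cohomology}, that is, the image of $A\in\Mod_A^{\oper{E}_d}$ (acting on itself through its $\oper{E}_d$-structure) under the equivalence \ref{equivalence of modules}, as spelled out in \ref{equivalence of the two definitions}. The underlying object is $A$: the closed ball $\bar{D}$ is assembled from a single disk and a collar of its boundary, and adjoining such a collar does not change the homotopy type of factorization homology, so $\int_{\bar{D}}A\simeq\int_D A\simeq A$ by \ref{left}. Unwinding the definitions, the residual action of little disks embedded near $\partial\bar{D}$ is exactly the self-action of $A$ coming from its $\oper{E}_d$-structure — the $d$-dimensional avatar of the classical identification of factorization homology of the interval with the diagonal bimodule — and the precise comparison is carried out in \cite{horeloperads}. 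Granting this, the displayed equivalence becomes $\R\Hom^{\bar{D}}_A(\un,M)\simeq\R\Hom^{S^{d-1}\times[0,1]}_A(A,M)$, and \ref{right hom} rewrites the right-hand side as $\R\Hom_{S^{d-1}_\kappa\Mod_A}(A,M)=\HC{\oper{E}_d}(A,M)$, which is the assertion.

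The main obstacle is precisely the identification $\L P_{\bar{D}}(\un)\simeq A$ of the second paragraph: beyond computing the underlying object, one must verify that the boundary-collar action recovers the tautological $S^{d-1}_\kappa$-shaped module structure on $A$. Everything else — the computation of $U_A^{\emptyset}$, the cofibrancy of $\un$, and the appeals to \ref{what is Hom over a bordism} and \ref{right hom} — is formal.
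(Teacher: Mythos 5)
Your proposal is correct and follows essentially the same route as the paper: identify $\varnothing\Mod_A$ with $\cat{C}$, observe that $\L P_{\bar{D}}(\un)\simeq A$ as an $S^{d-1}_\kappa$-shaped module, and conclude by \ref{what is Hom over a bordism} together with \ref{right hom}. The paper's own proof is terser — it simply asserts $\L P_{\bar{D}}(\un)\simeq A$ — whereas you correctly isolate that identification (including the matching of module structures via the boundary collar) as the one substantive step.
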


\begin{proof}
$\un$, the unit of $\cat{C}$ is an object of $\varnothing\Mod_A$ (note that $\varnothing\Mod_A$ is equivalent to the category $\cat{C}$) and $\L P_{\bar{D}}(\un)$ is weakly equivalent to $A$. Then it suffices to apply \ref{what is Hom over a bordism}.
\end{proof}

This has the following surprising consequence.

\begin{coro}
The group $\on{Diff}_f^{S^{d-1}}(\bar{D})$ acts on $\HC{\oper{E}_d}(A,M)$.
\end{coro}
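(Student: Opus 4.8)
The plan is to exhibit the claimed action as coming from the functoriality of $\R\Hom^{W}_A(-,-)$ in the bordism $W$, specialized to $W=\bar{D}$. Recall from the preceding proposition that $\HC{\oper{E}_d}(A,M)\simeq\R\Hom^{\bar{D}}_A(\un,M)$, where $\bar{D}$ is the closed unit ball viewed as a bordism from $\varnothing$ to $S^{d-1}_\kappa$. The right-hand side is defined (Construction \ref{Hom over a bordism}) as the homotopy end $\R\hom_{(\Disk^{\varnothing\sqcup S^{d-1}_{-\kappa}})\op}(\Emb^{\varnothing\sqcup S^{d-1}_{-\kappa}}_f(-,\bar{D}),\mathscr{F}(\un,A,M))$. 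The functor $\mathscr{F}(\un,A,M)$ and the indexing category $\Disk^{\varnothing\sqcup S^{d-1}_{-\kappa}}$ do not involve $\bar{D}$ at all; the only place $\bar{D}$ enters is through the presheaf $\Emb^{\varnothing\sqcup S^{d-1}_{-\kappa}}_f(-,\bar{D})$ on that category. So the first step is simply to observe that a framed self-embedding of $\bar{D}$ (as a bordism, i.e.\ restricting to the identity near the outgoing boundary $S^{d-1}_\kappa$ in the appropriate sense) induces, by post-composition, an endomorphism of the presheaf $\Emb^{\varnothing\sqcup S^{d-1}_{-\kappa}}_f(-,\bar{D})$, hence by functoriality of the homotopy end an endomorphism of $\R\Hom^{\bar{D}}_A(\un,M)$.

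Next I would promote this to an honest action of $\on{Diff}_f^{S^{d-1}}(\bar{D})$. The topological group $\on{Diff}_f^{S^{d-1}}(\bar{D})$ (framed diffeomorphisms of $\bar{D}$ fixing the boundary $S^{d-1}$ and its framing) acts on $\bar{D}$ by framed diffeomorphisms of bordisms, hence acts on the presheaf $\Emb^{\varnothing\sqcup S^{d-1}_{-\kappa}}_f(-,\bar{D})$ through simplicial (equivalently, after $\on{Sing}$, simplicially enriched) natural automorphisms. Since the homotopy end $\R\hom_{(\Disk^{\varnothing\sqcup S^{d-1}_{-\kappa}})\op}(-,\mathscr{F}(\un,A,M))$ is a simplicial functor in its first variable, applying it to this action yields an action of $\on{Sing}\on{Diff}_f^{S^{d-1}}(\bar{D})$ on $\R\Hom^{\bar{D}}_A(\un,M)\simeq\HC{\oper{E}_d}(A,M)$ in $\cat{C}$. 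Strictly, to get a strict group action rather than an action up to coherent homotopy one should replace the self-embedding monoid by the sub-simplicial-group of invertible components, or equivalently observe that the construction is functorial in the $\S$-enriched category of bordisms and their framed diffeomorphisms; either way one lands on an action of $\on{Diff}_f^{S^{d-1}}(\bar{D})$ in the homotopy category, which is what the statement asserts.

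The one point requiring care — and the main (mild) obstacle — is \emph{cofibrancy/fibrancy and derived functoriality}: the homotopy end in Definition/Construction \ref{Hom over a bordism} is computed after cofibrant replacement of $\un$ in $\varnothing\Mod_A\simeq\cat{C}$ and fibrant replacement of $M$, and one must check that the $\on{Diff}_f^{S^{d-1}}(\bar{D})$-action on the presheaf $\Emb^{\varnothing\sqcup S^{d-1}_{-\kappa}}_f(-,\bar{D})$ is by equivalences compatible with the bar-type resolution used to compute the homotopy end (cf.\ the proof of \ref{hom as a holim}, where the end is presented as the totalization of a Reedy-fibrant cosimplicial object built from $\mathscr{F}(\un,A,M)$). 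Since the action does not touch $\mathscr{F}(\un,A,M)$, $A$, $\un$, or $M$ at all, the resolutions are untouched and the diffeomorphism group acts degreewise; passing to totalizations then gives the action on the derived object. Apart from this bookkeeping, the corollary is a formal consequence of the definition of $\R\Hom^{W}_A$ and its manifest functoriality in $W$.
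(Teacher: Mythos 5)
Your proposal is correct and follows essentially the route the paper intends: the paper offers no explicit proof, treating the corollary as immediate from the identification $\HC{\oper{E}_d}(A,M)\simeq\R\Hom^{\bar{D}}_A(\un,M)$ together with the evident functoriality of the presheaf $\Emb^{\varnothing\sqcup S^{d-1}_{-\kappa}}_f(-,\bar{D})$ under post-composition with framed self-diffeomorphisms of $\bar{D}$ rel boundary. Your additional care about derived functoriality and the strictness of the action is sound bookkeeping that the paper leaves implicit.
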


\begin{rem}
The group $\on{Diff}_f^{S^{d-1}}(\bar{D})$ is weakly equivalent to the homotopy fiber of the inclusion
\[\on{Diff}^{S^{d-1}}(\bar{D})\to\on{Imm}^{S^{d-1}}(\bar{D},\bar{D})\]
where the $S^{d-1}$ superscript means that we are restricting to the diffeomorphisms or immersions which are the identity outside on $S^{d-1}=\partial\bar{D}$. In fact the action of $\on{Diff}_f^{S^{d-1}}(\bar{D})$ factors through the inverse limit of the embedding calculus tower computing this group. Since we are in the codimension $0$ case, the embedding calculus tower should not be expected to converge. Even if it does not converge, it is an interesting mathematical object. In particular, using the work of Arone and Turchin in \cite{aronerational} and Willwacher in  \cite[Theorem 1.2.]{willwacherkontsevich}, we get an action of the Grothendieck-Teichm\"uller Lie algebra $\mathfrak{grt}$ on the $\oper{E}_2$-Hochschild cohomology of an algebra over $H\mathbb{Q}$. We hope to further study this action in future work.
\end{rem}

\section{Pirashvili's higher Hochschild homology}

Let $R$ be a commutative graded ring.  We denote by $\cat{Ch}_{\geq 0}(R)$ the category of non-negatively graded chain complexes. This has a model category structure in which the weak equivalences are the quasi-isomorphisms, the cofibrations are the degreewise monomorphisms with degreewise projective cokernel and the fibrations are the epimorphisms. In particular, any object is fibrant and the cofibrant objects are the degreewise projective chain complexes.

The model category $\cat{Ch}_{\geq 0}(R)$ is cofibrantly generated. Thus, we have the projective model category structure on functors $\cat{Fin}\to\cat{Ch}_{\geq 0}(R)$, in which weak equivalences and fibrations are objectwise. The following definition is due to Pirashvili (see \cite[Introduction, p.151]{pirashvilihodge}, see also \cite[Definition 2.]{ginotderived}).

\begin{defi}
Let $A$ be a degreewise projective commutative algebra in $\cat{Ch}_{\geq 0}(R)$ and let $X$ be a simplicial set. We denote by $\on{HH}^X(A|R)$ the homotopy coend
\[\Map(-,X)\otimes^{\L}_{\cat{Fin}}A\]
\end{defi}

\begin{rem}
In practice, we can take $\on{HH}^X(A|R)$  to be the realization of the simplicial object
\[\on{B}_{\bullet}(\Map(-,X),\cat{Fin},A)\]
This construction preserves quasi-isomorphism between degreewise projective commutative algebras. In the following $\on{HH}^X(A|R)$ will be taken to be this explicit model.

This construction also sends a weak equivalence $X\goto{\simeq} Y$ to a weak equivalence \[\on{HH}^X(A|R)\goto{\simeq}\on{HH}^Y(A|R)\]
\end{rem}

\begin{prop}
Let $A$ be a degreewise projective commutative algebra in $\cat{Ch}_{\geq 0}(R)$, then the functor $X\mapsto\HH{X}(A|R)$ lifts to a functor from $\S$ to the category of commutative algebra in $\cat{Ch}_{\geq 0}(R)$.
\end{prop}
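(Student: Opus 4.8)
The plan is to exhibit the commutative-algebra structure on $\HH{X}(A|R)$ directly from the bar-construction model, using the fact that $\Map(-,X)\colon\cat{Fin}\to\S$ is lax symmetric monoidal in a suitable sense. First I would recall that $\cat{Fin}$ is the PROP associated to the commutative operad $\oper{C}om$, so it carries a symmetric monoidal structure given on objects by addition of natural numbers; the key observation is that the functor $n\mapsto\Map(n,X)=X^n$ is \emph{symmetric monoidal}: there is a natural isomorphism $\Map(n,X)\times\Map(m,X)\cong\Map(n+m,X)$ compatible with the disjoint-union structure map $\cat{Fin}\times\cat{Fin}\to\cat{Fin}$, because a map out of a disjoint union is the same as a pair of maps. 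Dually, since $A$ is a commutative algebra, the associated symmetric monoidal functor $A\colon\cat{Fin}\to\cat{Ch}_{\geq 0}(R)$ (in the sense recalled in Section~1) is also symmetric monoidal, sending $n$ to $A^{\otimes n}$.

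Next I would use that the coend defining $\HH{X}(A|R)$ is a weighted colimit over $\cat{Fin}$ of two symmetric monoidal functors whose monoidal structures are compatible. Concretely, in the explicit model $\on{B}_\bullet(\Map(-,X),\cat{Fin},A)$, each simplicial level is of the form
\[
\bigsqcup_{n_0\to\cdots\to n_k}\Map(n_k,X)\otimes A^{\otimes n_0},
\]
and the symmetric monoidal structures on $\Map(-,X)$ and $A$ together with the one on $\cat{Fin}$ produce, levelwise, multiplication maps
\[
\on{B}_\bullet(\Map(-,X),\cat{Fin},A)\otimes \on{B}_\bullet(\Map(-,X),\cat{Fin},A)\to \on{B}_\bullet(\Map(-,X),\cat{Fin},A)
\]
by pairing up indexing chains via $+$ and using the monoidal constraints on the two functors; these are natural in the simplicial variable, so after geometric realization (which is symmetric monoidal) one obtains a multiplication on $\HH{X}(A|R)$. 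Commutativity, associativity and unitality follow from the corresponding coherences for the symmetric monoidal structures involved, using the symmetry isomorphism of $\cat{Fin}$ on the indexing and the commutativity of $A$ on the tensor factors. One then checks that $X\mapsto\HH{X}(A|R)$ together with these multiplications assembles into a functor $\S\to\mathbf{CAlg}(\cat{Ch}_{\geq 0}(R))$, the functoriality in $X$ being manifest since the multiplication was built naturally from $\Map(-,X)$.

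The main obstacle, and the only point requiring genuine care, is bookkeeping the interchange: one must verify that the levelwise multiplication maps on the bar construction commute with \emph{all} the simplicial face and degeneracy maps, not merely with the outer ones. This amounts to checking that, for a composable chain $n_0\to\cdots\to n_k$ in $\cat{Fin}$, the maps induced on $\Map(-,X)$ and on $A$ by composing morphisms intertwine correctly with the additive (disjoint union) structure — i.e. that the diagram comparing $(f\sqcup f')_*$ with $f_*\sqcup f'_*$ commutes, which is exactly the statement that $\Map(-,X)$ and $A$ are symmetric monoidal functors. Granting this, the rest is formal: realization of a simplicial commutative algebra object is a commutative algebra, and the degreewise projectivity hypothesis guarantees we stay within cofibrant objects so that the construction is homotopically meaningful. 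I would therefore organize the proof as: (1) record that $\Map(-,X)$ and $A$ are symmetric monoidal functors on $\cat{Fin}$; (2) deduce a simplicial commutative monoid structure on $\on{B}_\bullet(\Map(-,X),\cat{Fin},A)$; (3) realize and observe functoriality in $X$.
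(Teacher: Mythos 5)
Your construction produces the same multiplication as the paper's, but the two proofs package it differently. The paper never touches the bar construction directly: it equips $\on{Fun}(\cat{Fin}\op,\S)$ with the Day convolution product, observes that $\Map(-,X)\otimes\Map(-,Y)\cong\Map(-,X\sqcup Y)$ so that $X\mapsto\HH{X}(A|R)$ is a symmetric monoidal functor from $(\S,\sqcup)$, and then feeds in the fact that \emph{every} simplicial set is, in a unique and functorial way, a commutative monoid in $(\S,\sqcup)$ via the fold map $X\sqcup X\to X$. The multiplication is thus $\HH{X}(A|R)\otimes\HH{X}(A|R)\cong\HH{X\sqcup X}(A|R)\to\HH{X}(A|R)$. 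Unwinding this on elements gives exactly your concatenation of tuples, so the structures agree; what the paper's packaging buys is that commutativity and functoriality in $X$ come for free from the corresponding (strict) properties of the fold map, while your unpacking buys an explicit formula at the cost of having to verify those properties by hand.

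That verification is where your argument has a real, if repairable, gap. Your levelwise map on $\on{B}_\bullet(\Map(-,X),\cat{Fin},A)$ sends the summand indexed by the pair of chains $(n_\bullet,m_\bullet)$ to the summand indexed by $n_\bullet+m_\bullet$, and precomposing with the swap sends it to the summand indexed by $m_\bullet+n_\bullet$. These are \emph{different} summands of $\on{B}_k$: the chains $\{f_i+g_i\}$ and $\{g_i+f_i\}$ agree only after conjugating by the block permutations $n_i+m_i\cong m_i+n_i$, which are nonidentity morphisms of $\cat{Fin}$. So the multiplication you define is strictly associative and unital, and compatible with faces and degeneracies as you say, but it is \emph{not} strictly commutative in each simplicial degree; the "symmetry isomorphism of $\cat{Fin}$" you invoke only identifies the two products after passing to the coend (equivalently, up to the simplicial homotopy that the bar construction records). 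You therefore get an associative algebra whose commutativity holds only up to coherent homotopy, not literally a simplicial commutative monoid. The fix is either to state the result at the level of the strict coend (where the block-swap relation is imposed) or, better, to reorganize as the paper does: build the external isomorphism $\HH{X}(A|R)\otimes\HH{Y}(A|R)\cong\HH{X\sqcup Y}(A|R)$ first and obtain the multiplication from the fold map, whose equation $\nabla\circ\mathrm{swap}=\nabla$ holds on the nose in $\S$. Functoriality in $X$ is then also immediate rather than something to be checked against your chain-pairing formula.
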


\begin{proof}
The category $\on{Fun}(\cat{Fin}\op,\S)$ equipped with the convolution tensor product is  a symmetric monoidal model category (see \cite[Proposition 2.2.15]{isaacsoncubical}). It is easy to check that there is an isomorphism:
\[\Map(-,X)\otimes\Map(-,Y)\cong\Map(-,X\sqcup Y)\]
Moreover, since $A:\cat{Fin}\to\cat{Ch}_{\geq 0}(R)$ is a commutative algebra for the convolution tensor product, the objects $\HH{X}(A|R)$ is a symmetric monoidal functor in the $X$ variable. To conclude, it suffices to observe that any simplicial set is a commutative monoid with respect to the disjoint union in a unique way and that this structure is preserved by maps in $\S$. Therefore, $\HH{X}(A|R)$ is a commutative algebra functorially in $X$.
\end{proof}

\begin{prop}\label{HH pushout}
Let $A$ be a degreewise projective commutative algebra in $\cat{Ch}_{\geq 0}(R)$. Let
\[\xymatrix{
X\ar[r]\ar[d]&Z\ar[d]\\
Y\ar[r]&P
}\]
be a homotopy pushout in the category of simplicial sets. Then there is a weak equivalence
\[\HH{P}(A|R)\simeq |\on{B}_\bullet(\HH{Y}(A|R),\HH{X}(A|R),\HH{Z}(A|R))|\]
\end{prop}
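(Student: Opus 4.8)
The plan is to exploit the fact, established in the preceding propositions, that $X \mapsto \HH{X}(A|R)$ is a symmetric monoidal functor from $\S$ (with disjoint union) to commutative algebras in $\cat{Ch}_{\geq 0}(R)$, together with the explicit bar-construction model. First I would reduce to the case where the homotopy pushout is an honest pushout along a cofibration: choose a factorization $X \to Y$ as a cofibration (i.e.\ an injection of simplicial sets) followed by a trivial fibration, and similarly arrange $X \to Z$ to be a cofibration, so that the homotopy pushout $P$ is computed by the ordinary pushout $Y \sqcup_X Z$ of cofibrant objects. Since $\HH{-}(A|R)$ preserves weak equivalences of simplicial sets, this loses no generality. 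Then I would observe that the pushout $Y \sqcup_X Z$ in $\S$ is computed as the realization of the two-sided bar construction $\on{B}_\bullet(Y, X, Z)$ in simplicial sets, whose $n$-simplices are $Y \sqcup X^{\sqcup n} \sqcup Z$ (here using that $X \to Y$ and $X\to Z$ are injections, so the bar construction is a Reedy-cofibrant simplicial object whose realization is the pushout).

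Next I would apply the functor $\HH{-}(A|R)$ levelwise to this simplicial simplicial-set $\on{B}_\bullet(Y,X,Z)$. Using the symmetric monoidal structure, i.e.\ the natural isomorphism $\HH{W_1 \sqcup W_2}(A|R) \cong \HH{W_1}(A|R) \otimes \HH{W_2}(A|R)$, the value at level $n$ is
\[
\HH{Y \sqcup X^{\sqcup n} \sqcup Z}(A|R) \cong \HH{Y}(A|R) \otimes \HH{X}(A|R)^{\otimes n} \otimes \HH{Z}(A|R),
\]
and one checks that the simplicial structure maps correspond precisely, via this identification, to those of the two-sided bar construction $\on{B}_\bullet(\HH{Y}(A|R), \HH{X}(A|R), \HH{Z}(A|R))$ in commutative algebras — the face maps use the algebra multiplication on $\HH{X}(A|R)$ and the module actions on the two outer factors (coming from the maps $X \to Y$ and $X \to Z$), the degeneracies use the unit. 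This is exactly the sort of bookkeeping verification that the functoriality statement was set up to make routine. Finally I would commute realization past $\HH{-}(A|R)$: since the latter is defined as a coend $\Map(-,-) \otimes^{\L}_{\cat{Fin}} A$, it preserves geometric realizations (colimits commute with colimits), so
\[
\HH{|\on{B}_\bullet(Y,X,Z)|}(A|R) \simeq |\, \HH{\on{B}_\bullet(Y,X,Z)}(A|R)\,| \simeq |\on{B}_\bullet(\HH{Y}(A|R), \HH{X}(A|R), \HH{Z}(A|R))|,
\]
and the left side is $\HH{P}(A|R)$ by the reduction of the first step.

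The main obstacle I anticipate is homotopical control of the realization on the algebra side: the equivalence "$\HH{-}$ commutes with realization" is a statement about homotopy colimits, so I need the simplicial object $\on{B}_\bullet(\HH{Y}(A|R), \HH{X}(A|R), \HH{Z}(A|R))$ to be Reedy cofibrant (or its realization to agree with the homotopy colimit) in $\cat{Ch}_{\geq 0}(R)$, which in turn requires knowing that $\HH{X}(A|R)$ is degreewise projective and that the latching maps are cofibrations. This should follow from the hypothesis that $A$ is degreewise projective together with the bar-construction model for $\HH{X}(A|R)$ (which is built from tensor powers of $A$), but it needs to be spelled out carefully; alternatively one sidesteps it by replacing each $\HH{W}(A|R)$ by a cofibrant model compatibly in $W$. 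The other point requiring a little care is that the identification of structure maps at the chain level is strictly compatible (not just up to homotopy) with the bar differentials, which is where one really uses that $\HH{-}(A|R)$ is symmetric \emph{monoidal} on the nose via the explicit model, rather than merely lax.
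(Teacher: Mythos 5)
Your proposal is correct and follows essentially the same route as the paper: both realize $P$ as the two-sided bar construction $[p]\mapsto Y\sqcup X^{\sqcup p}\sqcup Z$, apply the symmetric monoidality of $\HH{-}(A|R)$ with respect to disjoint union levelwise, and conclude by commuting the two realizations (the paper makes this precise via the bisimplicial object $\on{B}_\bullet(\on{B}_\bullet(\Map(-,Y),\Map(-,X),\Map(-,Z)),\cat{Fin},A)$ and Fubini for bisimplicial objects, which also disposes of the Reedy-cofibrancy worry you raise, since the explicit bar model is built from tensor powers of the degreewise projective $A$).
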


\begin{proof}
First, notice that the maps $X\to Z$ and $X\to Y$ induce commutative algebra maps $\HH{X}(A|R)\to\HH{Y}(A|R)$ and $\HH{X}(A|R)\to\HH{Z}(A|R)$. In particular $\HH{Z}(A|R)$ and $\HH{Y}(A|R)$ are modules over $\HH{X}(A|R)$. This explains the bar construction in the statement of the proposition.

We can explicitly construct $P$ as the realization of the following simplicial space
\[[p]\mapsto Y\sqcup X^{\sqcup p}\sqcup Z\]
where the face maps are induced by the codiagonals and the map $X\to Y$ and $X\to Z$ and the degeneracies are induced by the maps from the empty simplicial set to $X$, $Y$ and $Z$.

For a finite set $S$, and any simplicial space $U_\bullet$, there is an isomorphism
\[|U_\bullet^S|\cong |U_\bullet|^S\]

Therefore, there is a weak equivalence of functors on $\cat{Fin}$
\[\Map(-,P)\simeq |\on{B}_\bullet(\Map(-,Y),\Map(-,X),\Map(-,Z))|\]
where the bar construction on the right hand side is in the category $\on{Fun}(\cat{Fin},\S)$ with the convolution tensor product.

We can form the following bisimplicial object in $\cat{Ch}_{\geq 0}(R)$:
\[\on{B}_\bullet(\on{B}_\bullet(\Map(-,Y),\Map(-,X),\Map(-,Z)),\cat{Fin},A)\]

By the previous observation, if we first realize with respect to the inner simplicial variable and then the outer one, we find something equivalent to $\HH{P}(A|R)$. If we first realize with respect to the outer variable, we find
\[\on{B}_\bullet(\HH{Y}(A|R),\HH{X}(A|R),\HH{Z}(A|R))\]
The two realizations are equivalent which concludes the proof.
\end{proof}

\begin{coro}\label{Hochschild homology}
Let $A$ be a degreewise projective commutative algebra in $\cat{Ch}_{\geq 0}(R)$, then $\HH{S^1}(A)$ is quasi-isomorphic to the Hochschild chains on $A$.
\end{coro}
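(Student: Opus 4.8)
The plan is to exhibit $S^1$ as a homotopy pushout and feed this into Proposition \ref{HH pushout}. Since $S^1\cong\Delta^1/\partial\Delta^1$ is the suspension of $S^0$, it is the homotopy pushout of the diagram $\on{pt}\leftarrow S^0\to\on{pt}$ in $\S$; concretely, replacing one of the two maps $S^0\to\on{pt}$ by the cofibration $\partial\Delta^1\hookrightarrow\Delta^1$, the ordinary pushout of $\Delta^1\leftarrow\partial\Delta^1\to\on{pt}$ is $\Delta^1/\partial\Delta^1\cong S^1$. Applying Proposition \ref{HH pushout} with $X=S^0$, $Y=Z=\on{pt}$ and $P=S^1$ then yields a weak equivalence
\[\HH{S^1}(A|R)\simeq\bigl|\on{B}_\bullet\bigl(\HH{\on{pt}}(A|R),\,\HH{S^0}(A|R),\,\HH{\on{pt}}(A|R)\bigr)\bigr|.\]

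Next I identify the three entries. The simplicial set $\on{pt}=\Delta^0$ has exactly one simplex in each degree, so the weight $\Map(-,\on{pt})$ is the constant functor $\cat{Fin}\op\to\S$ at a point, whence $\HH{\on{pt}}(A|R)=\Map(-,\on{pt})\otimes^{\L}_{\cat{Fin}}A\simeq\on{hocolim}_{\cat{Fin}}A$; since $\cat{Fin}$ has a terminal object (the one-element set), the inclusion of that object is homotopy final and this homotopy colimit is just the value of the functor $A\colon\cat{Fin}\to\cat{Ch}_{\geq 0}(R)$ there, namely $A$ itself. Because the functor $X\mapsto\HH{X}(A|R)$ is symmetric monoidal in $X$ and $S^0=\on{pt}\sqcup\on{pt}$, we get $\HH{S^0}(A|R)\simeq A\otimes_R A$; for the same reason the two maps $\HH{S^0}(A|R)\to\HH{\on{pt}}(A|R)$ induced by $S^0\to\on{pt}$ are both the multiplication $A\otimes_R A\to A$ (the codiagonal $\on{pt}\sqcup\on{pt}\to\on{pt}$ of simplicial sets). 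Writing $A^e=A\otimes_R A$, which is degreewise $R$-projective since $A$ is, we conclude
\[\HH{S^1}(A|R)\simeq\bigl|\on{B}_\bullet(A,A^e,A)\bigr|,\]
the two-sided bar construction of the commutative algebra $A$ over its enveloping algebra $A^e$, both module structures being given by multiplication.

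Finally one recognises the right-hand side as the Hochschild chains. As $A$ and $A^e$ are degreewise $R$-projective, $\on{B}_\bullet(A,A^e,A)$ computes the derived tensor product $A\otimes^{\L}_{A^e}A$, which for the commutative algebra $A$ is its Hochschild homology; equivalently, the standard comparison of bar resolutions (both resolve $A$ as an $A^e$-module, and both compute $\on{Tor}^{A^e}_\ast(A,A)$) identifies $\on{B}_\bullet(A,A^e,A)$ up to quasi-isomorphism with the cyclic bar complex $[p]\mapsto A^{\otimes(p+1)}$, i.e. the classical Hochschild chain complex of $A$. This is the assertion of the corollary. The only delicate point is this last step: one must genuinely use the projectivity of $A$ so that the degreewise terms $A\otimes_R(A^e)^{\otimes p}\otimes_R A$ of the bar construction have the correct homotopy type and $A\otimes^{\L}_{A^e}A$ is correctly computed --- if one simply defines the Hochschild chains to be $A\otimes^{\L}_{A^e}A$ there is nothing left to verify. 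An alternative that avoids Proposition \ref{HH pushout} altogether is to use the explicit simplicial model $\HH{X}(A|R)\simeq\bigl|\,[p]\mapsto A^{\otimes X_p}\,\bigr|$ with $X=\Delta^1/\partial\Delta^1$, whose set of $p$-simplices has $p+1$ elements and for which the simplicial structure maps visibly reproduce the cyclic bar differential.
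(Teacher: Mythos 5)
Your proof is correct and takes essentially the same route as the paper's: both exhibit $S^1$ as the homotopy pushout of $\on{pt}\leftarrow S^0\to\on{pt}$, apply Proposition \ref{HH pushout} to obtain $\bigl|\on{B}_\bullet(A,A\otimes A,A)\bigr|$, and identify this with $A\otimes^{\L}_{A\otimes A\op}A$, i.e.\ Hochschild chains. You simply spell out in more detail the identifications $\HH{\on{pt}}(A|R)\simeq A$ and $\HH{S^0}(A|R)\simeq A\otimes_R A$, which the paper dispatches with the remark that $\HH{S}(A)=A^{\otimes S}$ for a finite set $S$.
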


\begin{proof}
We can write $S^1$ as the homotopy pushout of
\[\xymatrix{
S^0\ar[d]\ar[r]&\on{pt}\\
\on{pt}&}\]
If $S$ is a finite set $\HH{S}(A)=A^{\otimes S}$ with the obvious commutative algebra structure. In particular, the previous theorem gives
\[\HH{S^1}(A)\simeq |\on{B}_\bullet(A,A\otimes A,A)|\]
Since $A=A\op$, the right hand side is quasi-isomorphic to $A\otimes^{\L}_{A\otimes A\op}A$
\end{proof}

\section{The spectral sequence}

We construct a spectral sequence converging to factorization homology. Its $\mathrm{E}^2$-page is identified with Pirashvili's higher Hochschild homology. For $R$ a $\mathbb{Z}$-graded ring, we denote by $\cat{GrMod}_R$ the category of $\mathbb{Z}$-graded left $R$-modules. We denote by $[n]$ the shift by $n$ functor. More precisely, if $M$ is an object of $\cat{GrMod}_R$, $M[n]$ is the graded $R$-module which in degree $k$ is $M_{k-n}$. 

\begin{defi}
Let $\cat{I}$ be a small discrete category and $F:\cat{I}\to \cat{GrMod}_R$ be a functor landing in the category of graded modules over $R$. We define \emph{the homology of $\cat{I}$ with coefficients in $F$} to be the homology groups of the homotopy colimit of $F$ seen as a functor concentrated in homological degree $0$ from $\cat{I}$ to $\cat{Ch}_{\geq 0}(\cat{GrMod}_R)$.

We write $\on{H}^R_*(\cat{I},F)$ for the homology of $\cat{I}$ with coefficients in $F$.
\end{defi}

Note that since we consider graded modules, the chain complexes are graded chain complexes which means that each homology groups is graded. We denote by $\on{H}_{s,t}^R(\cat{I},F)$ the degree $t$ part of the $s$-th homology group. Note that $s$ lives in $\mathbb{Z}_{\geq 0}$ whereas $t$ lives in $\mathbb{Z}$.

There is an explicit model for this homology. We construct the simplicial object of $\cat{GrMod}_R$ whose $p$ simplices are
\[\on{B}_p(R,\cat{I},F)=\bigoplus_{i_0\to\ldots\to i_p}F(i_0)\]

We can form the normalized chain complex associated to this simplicial object in $\cat{GrMod}_R$  and we get a non-negatively graded chain complex in $\cat{GrMod}_R$. Its homology groups are the homology groups of $\cat{I}$ with coefficients in $F$.

Note that if $E$ is an associative algebra in symmetric spectra, then $E_*=\pi_*(E)$ is an associative ring in graded abelian groups and if $M$ is a left $E$-module, then $\pi_*(M)$ is an object of $\cat{GrMod}_{E_*}$.

\begin{prop}
Let $F:\cat{I}\to\Mod_E$ be a functor from a discrete category to the category of left modules over an associative algebra in symmetric spectra $E$. There is a spectral sequence of $E_*$-modules
\[\on{E}^2_{s,t}\cong \on{H}_{s,t}^{E_*}(\cat{I},\pi_*(F))\implies \pi_{s+t}(\on{hocolim}_{\cat{I}}F)\]
\end{prop}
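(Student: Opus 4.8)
The plan is to produce the spectral sequence as the homotopy spectral sequence of a simplicial object of $\Mod_E$ whose realization computes $\on{hocolim}_{\cat{I}} F$, then identify its $\on{E}^1$-page and $\on{E}^2$-page. First I would replace $F$ by a pointwise cofibrant-in-$\Mod_E$ functor (using the projective model structure on $\on{Fun}(\cat{I},\Mod_E)$, or simply cofibrantly replacing levelwise, which does not change the homotopy colimit); since $\cat{I}$ is discrete, the homotopy colimit is modeled by the geometric realization (i.e.\ the coend with $\Delta^\bullet$) of the bar construction
\[
\on{B}_\bullet(\mathbb{S},\cat{I},F): [p]\mapsto \bigvee_{i_0\to\cdots\to i_p} F(i_0),
\]
a simplicial object in $\Mod_E$. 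The standard skeletal filtration of the geometric realization of a simplicial spectrum gives a tower of cofibrations whose associated graded pieces are the suspensions $\Sigma^p$ of the $p$-th latching-quotient (the "non-degenerate part" $\on{B}_p/\on{L}_p\on{B}_p$), smashed with $S^p/\partial$. Applying $\pi_*$ to this filtered object yields an exact couple and hence a spectral sequence converging (conditionally, and strongly here because in each total degree the filtration is eventually constant — the coproducts are over chains of fixed length, and $\pi_*$ commutes with the wedges) to $\pi_{s+t}(\on{hocolim}_{\cat{I}} F)$, with
\[
\on{E}^1_{p,q} = \pi_{p+q}\bigl(\Sigma^p(\on{B}_p/\on{L}_p\on{B}_p)\bigr) = \bigl(\text{normalized }\pi_*\text{ of }\on{B}_\bullet(\mathbb{S},\cat{I},F)\bigr)_{p,q-p}\ \text{ shifted appropriately}.
\]

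Next I would identify the $\on{E}^1$ and $\on{E}^2$ terms with the algebraic bar construction. Because each $F(i_0)$ is cofibrant in $\Mod_E$ and wedges of $E$-modules have $\pi_*$ given by the direct sum (smashing and wedges commute with $\pi_*$ up to the usual $\operatorname{Tor}$ correction, which vanishes here since we are just taking wedges, not smash products), we get
\[
\pi_*\bigl(\on{B}_p(\mathbb{S},\cat{I},F)\bigr) \;\cong\; \bigoplus_{i_0\to\cdots\to i_p}\pi_*(F(i_0)) \;=\; \on{B}_p(E_*,\cat{I},\pi_*F)
\]
as graded $E_*$-modules, naturally in $[p]$. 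Thus the $\on{E}^1$-page is exactly the normalized chain complex of the simplicial graded $E_*$-module $\on{B}_\bullet(E_*,\cat{I},\pi_*F)$, whose homology is by definition $\on{H}^{E_*}_{s,t}(\cat{I},\pi_*F)$. Taking $\on{E}^1$-differentials (the alternating sum of face maps, which on $\pi_*$ is precisely the differential of the algebraic bar complex — this is where one checks signs and that the simplicial structure maps of $\on{B}_\bullet(\mathbb{S},\cat{I},F)$ induce the expected maps on homotopy) gives
\[
\on{E}^2_{s,t} \;\cong\; \on{H}^{E_*}_{s,t}(\cat{I},\pi_*F),
\]
and everything is visibly a spectral sequence of $E_*$-modules since the $E$-module structure is present at the spectrum level throughout.

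The main obstacle I expect is bookkeeping rather than a genuine difficulty: making the skeletal-filtration spectral sequence of a simplicial spectrum respect the $E$-module structure and converge correctly, and then matching indices and signs so that the $\on{E}^1$-differential is literally the normalized-bar differential defining $\on{H}^{E_*}_*(\cat{I},\pi_*F)$. Convergence is clean here because $\cat{I}$ is discrete so the bar construction is a genuine simplicial object and, degreewise, $\pi_*$ commutes with the relevant coproducts; strong convergence then follows from the fact that, in each fixed total degree $s+t$, the tower stabilizes after finitely many stages in the relevant sense (Milnor $\lim^1$ issues are controlled since we work with a colimit of a filtration by cofibrations). One could alternatively phrase the whole argument via the Dold–Kan correspondence and the Bousfield–Kan spectral sequence of the cosimplicial... rather, simplicial replacement, but the skeletal-filtration approach keeps the $E_*$-linearity manifest and is the shortest route.
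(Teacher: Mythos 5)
Your proposal is correct and follows essentially the same route as the paper: the paper also computes the homotopy colimit as the realization of the bar construction $\on{B}_\bullet(*,\cat{I},QF)$ after an objectwise cofibrant replacement, and then invokes the standard (skeletal-filtration) spectral sequence of a simplicial object, whose $\on{E}^1$-page is the normalized complex of $\on{B}_\bullet(E_*,\cat{I},\pi_*F)$, matching the paper's explicit bar-complex model for $\on{H}^{E_*}_{*,*}(\cat{I},\pi_*F)$. Your additional remarks on cofibrancy, $\pi_*$ of wedges, and strong convergence are exactly the details the paper leaves implicit.
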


\begin{proof}
The homotopy colimit can be computed by taking an objectwise cofibrant replacement of $F$ and then take the realization of the Bar construction
\[\on{hocolim}_{\cat{I}}F\simeq |\on{B}_\bullet(*,\cat{I},QF(-))|\]
We can then use the standard spectral sequence associated to a simplicial object
\end{proof}

Now assume that $E$ is commutative. Let $A$ be an $\oper{E}_d$-algebra in $\Mod_E$. Let $M$ be a framed $d$-manifold and let $\cat{D}(M)$ be the poset of open sets of $M$ that are diffeomorphic to a disjoint union of copies of $D$. We know from \ref{left} that the factorization homology of $A$ over $M$ can be computed as the homotopy colimit of the composition:
\[\cat{D}(M)\xrightarrow{\delta}\cat{E}_d\xrightarrow{A}\Mod_E\]

We are in a situation where we can apply the previous proposition. We thus get a spectral sequence of $E_*$-modules
\[\on{H}_{s,t}^{E_*}(\cat{D}(M),\pi_*(A\circ\delta))\implies \pi_{s+t}(\int_MA)\]

We want to exploit the fact that $A$ is a monoidal functor to obtain a more explicit model for the left hand side in some cases. 

From now on, $K$ denotes an associative algebra in ring spectra with a $\mathbb{Z}/2$-equivariant Künneth isomorphism. That is, we assume that the obvious map
\[K_*(X)\otimes_{K_*}K_*(Y)\to K_*(X\wedge Y)\]
is an isomorphism of functors of the pair $(X,Y)$ which is equivariant with respect to the obvious $\mathbb{Z}/2$-action on both sides.

Example of such spectra are the Eilenberg-MacLane spectra $Hk$ for any field $k$ or $K(n)$ the Morava $K$-theory of height $n$ at odd primes. 

\begin{prop}
There is a spectral sequence of $K_*(E)$-modules
\[\on{H}_*^{K_*E}(\cat{D}(M),K_*(A\circ\delta))\implies K_*(\int_M A)\]
\end{prop}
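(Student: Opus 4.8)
The plan is to bootstrap from the spectral sequence of $E_*$-modules already obtained in the previous paragraph, namely
\[\on{H}_{s,t}^{E_*}(\cat{D}(M),\pi_*(A\circ\delta))\implies \pi_{s+t}(\int_MA),\]
but applied to $K$-homology rather than homotopy groups. Concretely, I would first recall that for an associative ring spectrum $K$, the functor $X\mapsto K_*(X) = \pi_*(K\wedge X)$ on $\Mod_E$ (or on spectra) sends homotopy colimits to the associated derived colimit, so the same standard spectral sequence of a simplicial object applies: taking an objectwise cofibrant replacement $QF$ of $F = A\circ\delta$ and smashing with $K$, the realization $|\on{B}_\bullet(*,\cat{D}(M),K\wedge QF(-))|$ computes $K_*(\int_M A)$ by \ref{left}, and the standard filtration gives a spectral sequence with $\on{E}^2_{s,t} = \on{H}^{K_*}_{s,t}(\cat{D}(M), K_*(A\circ\delta))$ converging to $K_{s+t}(\int_M A)$. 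At this stage the $\on{E}^2$-page is the homology of the poset $\cat{D}(M)$ with coefficients in the functor $V\mapsto K_*(A(\delta V))$.

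The point of the present proposition is to upgrade this to a statement about $K_*E$-modules and to record the ring structure. Here I would use that $K$ is an \emph{algebra} in ring spectra, so $K\wedge E$ is a ring spectrum and $K_*E = \pi_*(K\wedge E)$ is a graded ring; since $\Mod_E$ is $E$-linear and $A$ is an $E$-module algebra, $K\wedge_{?} (A\circ\delta)$ inherits the structure of a functor valued in $K_*E$-modules after passing to homotopy groups — more precisely, $K_*(A(\delta V)) = \pi_*(K\wedge A(\delta V))$ is naturally a module over $K_*E$. The homotopy colimit computing the $\on{E}^2$-page can therefore be taken in graded $K_*E$-modules, and the bar complex $\on{B}_p(K_*E, \cat{D}(M), K_*(A\circ\delta)) = \bigoplus_{V_0\to\cdots\to V_p} K_*(A(\delta V_0))$ is a complex of $K_*E$-modules whose homology gives $\on{H}^{K_*E}_*(\cat{D}(M), K_*(A\circ\delta))$. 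The entire spectral sequence is then one of $K_*E$-modules, which is the assertion.

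The main obstacle, such as it is, is bookkeeping rather than anything deep: one must be careful that the cofibrant replacement $QF$ used to compute the homotopy colimit is compatible with smashing with $K$, i.e.\ that $K\wedge QF(-)$ still computes $K_*$ of the homotopy colimit objectwise — this is where one uses that cofibrant $E$-modules are sufficiently flat for $K\wedge_E(-)$, or simply that $K\wedge(-)$ on all of spectra preserves weak equivalences between cofibrant objects and homotopy colimits. Note that we have not yet invoked the Künneth hypothesis on $K$; that will only be needed in the next step, where the poset homology $\on{H}^{K_*E}_*(\cat{D}(M), K_*(A\circ\delta))$ is identified with Pirashvili's $\HH{M}_*(K_*A)$ by exploiting that $A\circ\delta$ is a monoidal functor and $K_*(A(D^{\sqcup n})) \cong (K_*A)^{\otimes_{K_*E} n}$. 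So the proof of this proposition is essentially: take the $E_*$-module spectral sequence already constructed, observe all the spectra in sight are $K\wedge E$-modules, and rerun the construction linearly over $K_*E$.
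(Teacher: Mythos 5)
Your proposal is correct and matches the paper's own (very brief) proof: smash the bar construction computing $\on{hocolim}_{\cat{D}(M)}A(\delta-)$ with $K$ levelwise and take the standard spectral sequence of the resulting simplicial object, observing that everything is linear over $K_*E$. Your added remarks — that the K\"unneth hypothesis is not yet needed at this stage and only enters when identifying the $\on{E}^2$-page with Pirashvili's higher Hochschild homology — are accurate and consistent with how the paper structures the argument.
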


\begin{proof}
We just smash the simplicial object computing $\on{hocolim}_{\cat{D}(M)}A(\delta-)$ with $K$ in each degree and take the associated spectral sequence.
\end{proof}

Now we want to identify $K_*(A\circ\delta)$ as a functor on $\cat{D}(M)$.

\begin{prop}
If $d=1$, $K_*(A)$ is an associative algebra in $K_*E$-modules, If $d>1$, $K_*(A)$ is a commutative algebra in the category of $K_*E$-modules. 
\end{prop}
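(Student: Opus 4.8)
The plan is to deduce the multiplicative structure on $K_*(A)$ from the fact that $A\colon\cat{E}_d\to\Mod_E$ is a symmetric monoidal functor together with the fact that $K_*(-)$ is lax (indeed strong, via the K\"unneth isomorphism) symmetric monoidal from $\Mod_E$ to $\cat{GrMod}_{K_*E}$. First I would observe that since $K$ has a $\mathbb{Z}/2$-equivariant K\"unneth isomorphism, the composite $K_*(A(-))\colon\cat{E}_d\to\cat{GrMod}_{K_*E}$ is again a symmetric monoidal functor: the structure maps $K_*(A(m))\otimes_{K_*E}K_*(A(n))\xrightarrow{\simeq}K_*(A(m)\otimes_E A(n))\to K_*(A(m+n))$ assemble into a natural transformation that is compatible with the symmetry isomorphisms precisely because the K\"unneth isomorphism is $\mathbb{Z}/2$-equivariant. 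In particular, restricting along the unit map $\un\to D$ (the unique $0$-ary operation) and the binary operations, the object $K_*(A)=K_*(A(1))$ acquires a unital multiplication $K_*(A)\otimes_{K_*E}K_*(A)\to K_*(A)$ indexed by the space $\oper{E}_d(2)=\Emb_f(D^{\sqcup 2},D)\simeq\on{Conf}(2,D)$.

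Next I would run the standard Eckmann--Hilton-type argument on $\pi_0$ of the operad. The key point is that passing to $K$-homology factors through $\pi_0$ of each mapping space, since $K_*(A\circ\delta)$ only sees the homotopy category; thus the multiplication on $K_*(A)$ is governed by $\pi_0\oper{E}_d(2)=\pi_0\on{Conf}(2,D)$. For $d=1$ this set has two points (the two orderings of two points on an interval), so $K_*(A)$ is an algebra over $\pi_0$ of the associative operad, i.e.\ an associative but not necessarily commutative $K_*E$-algebra. For $d\ge 2$ the space $\on{Conf}(2,D)\simeq S^{d-1}$ is connected, so $\pi_0\oper{E}_d(2)$ is a single point and the two orderings of the factors give homotopic, hence equal after applying $K_*$, multiplication maps; combined with the associativity and unitality inherited from the operad composition, this forces $K_*(A)$ to be a commutative $K_*E$-algebra. (Equivalently: the $d\ge 2$ little disks operad is, after applying $\pi_0$, the commutative operad.)

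I expect the only genuine subtlety to be bookkeeping the $\mathbb{Z}/2$-equivariance of the K\"unneth map carefully enough to be sure that the symmetry of the multiplication on $K_*(A)$ really is induced by the action of the path class in $\pi_0\on{Conf}(2,D)$ that swaps the two points, rather than by some extra sign coming from the grading. This is where the hypothesis ``$\mathbb{Z}/2$-equivariant K\"unneth isomorphism'' does its work, and it is the one place where I would spell out the comparison of symmetry isomorphisms rather than wave hands; everything else (unitality, associativity, the identification $\pi_0\on{Conf}(2,D)=\ast$ for $d\ge 2$ and $=\{\pm\}$ for $d=1$) is formal or classical.
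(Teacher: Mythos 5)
Your proposal is correct and follows essentially the same route as the paper's (much terser) proof: the paper simply notes that an $\oper{E}_1$-algebra is associative and an $\oper{E}_d$-algebra with $d>1$ is commutative in the homotopy category of $\Mod_E$ (which is your $\pi_0\on{Conf}(2,D)$ observation), and then transports the structure along $K_*$ via the K\"unneth isomorphism. Your explicit attention to where the $\mathbb{Z}/2$-equivariance of the K\"unneth map is used is a reasonable elaboration of a point the paper leaves implicit, not a different argument.
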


\begin{proof}
If $A$ is an associative (resp. commutative algebra) in $\on{Ho}(\cat{C})$, then $K_*(A)$ is an associative graded $K_*$-module. An $\oper{E}_1$ algebra in $\Mod_E$ is in particular an associative algebra in $\on{Ho}(\cat{C})$ and an $\oper{E}_d$-algebra with $d>1$ is a commutative algebra in $\on{Ho}(\cat{C})$. Thus $K_*(A)$ is an associative (resp. commutative) algebra in $K_*$-modules and the unit map $E\to A$ makes it into an associative (resp. commutative) algebra in $K_*E$-modules. 
\end{proof}

Now, we focus on the case where $d>1$. We have an obvious functor $\alpha:\cat{D}(M)\to\cat{Fin}$ which sends a configuration of disks on $M$ to its set of connected components. In particular, we can consider the functor
\[\cat{D}(M)\xrightarrow{\alpha}\cat{Fin}\goto{K_*(A)}\cat{GrMod}_{K_*E}\]
where the second map is induced by the commutative algebra structure on $K_*(A)$ that we have constructed in the previous proposition. It is clear that this functor coincides with the functor obtained by applying $K_*$ to the composite
\[\cat{D}(M)\goto{\delta}\cat{E}_d\goto{A}\Mod_E\]

From this, we deduce the following proposition :

\begin{prop}\label{spectral sequence}
There is an isomorphism
\[\on{H}_*^{K_*E}(\cat{D}(M),K_*(A\circ\delta))\cong \on{HH}_*^{\on{Sing}(M)}(K_*A|K_*E)\]
In particular, there is a spectral sequence
\[\on{HH}_s^{\on{Sing}(M)}(K_*A|K_*E)_t\implies K_{s+t}(\int_MA)\]
\end{prop}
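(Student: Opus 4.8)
The plan is to identify the functor $K_*(A \circ \delta) : \cat{D}(M) \to \cat{GrMod}_{K_*E}$ with the functor that appears in the definition of Pirashvili's higher Hochschild homology, and then match up the two explicit bar-complex models. First I would recall that, by the previous proposition, the functor $K_*(A \circ \delta)$ factors as $\cat{D}(M) \xrightarrow{\alpha} \cat{Fin} \xrightarrow{K_*(A)} \cat{GrMod}_{K_*E}$, where the second arrow is the symmetric monoidal functor classified by the commutative algebra $K_*(A)$ in $K_*E$-modules (this is exactly where $d > 1$ is used, so that $K_*(A)$ is genuinely commutative rather than merely associative). Thus the homology $\on{H}_*^{K_*E}(\cat{D}(M), K_*(A \circ \delta))$ is computed by the bar complex $\on{B}_\bullet(K_*E, \cat{D}(M), K_*(A) \circ \alpha)$.

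Next I would observe that the poset $\cat{D}(M)$ is homotopy equivalent, via $\alpha$ together with the nerve of $M$, to the simplicial set $\on{Sing}(M)$ viewed as a category over $\cat{Fin}$; more precisely, the key input is that the nerve of $\cat{D}(M)$ models $\on{Sing}(M)$ (this is the same fact underlying \ref{left}, namely that $M$ is recovered as a homotopy colimit over its poset of disks, which is the Weiss-cover/factorization-homology statement). Since the functor we are taking homology of is pulled back from $\cat{Fin}$ along $\alpha$, the homotopy colimit over $\cat{D}(M)$ can be rewritten, by cofinality/left Kan extension along $\alpha$, as a homotopy colimit over $\cat{Fin}$ of the left Kan extension of the constant functor along $\cat{D}(M) \to \cat{Fin}$, which is precisely $S \mapsto \Map(S, \on{Sing}(M))$ up to weak equivalence. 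This yields
\[
\on{H}_*^{K_*E}(\cat{D}(M), K_*(A \circ \delta)) \cong \pi_*\bigl( \Map(-, \on{Sing}(M)) \otimes^{\L}_{\cat{Fin}} K_*(A) \bigr) = \on{HH}_*^{\on{Sing}(M)}(K_*A \mid K_*E),
\]
using the definition of Pirashvili's construction from the fourth section together with the fact that $K_*E$ is an ordinary (graded) commutative ring, so the homotopy coend over $\cat{Fin}$ in $\cat{Ch}_{\geq 0}(\cat{GrMod}_{K_*E})$ computes exactly the stated homology groups. The final spectral sequence statement is then just the combination of this isomorphism with the spectral sequence of the previous proposition.

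The main obstacle is the second step: making precise the identification of the homotopy colimit over $\cat{D}(M)$ of a functor pulled back from $\cat{Fin}$ with the Pirashvili coend over $\cat{Fin}$. One must be careful that the left Kan extension of the terminal functor along $\cat{D}(M) \to \cat{Fin}$ really is weakly equivalent to $\Map(-, \on{Sing}(M))$ — this uses that $\cat{D}(M) \to \cat{D}(M)$ restricted over a fixed finite set $S$ (the configuration-of-$|S|$-disks poset) is a model for $\on{Conf}(|S|, M)$ in a way compatible with the $\Sigma$-action, which is exactly \ref{framed embeddings of disks} plus the content of \ref{left}. Once this homotopy-colimit comparison is in hand, everything else is formal manipulation of bar constructions and the standard spectral sequence of a simplicial object, so I would state the comparison carefully and then assemble the pieces.
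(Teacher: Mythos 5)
Your route is the same as the paper's: factor $K_*(A\circ\delta)$ through $\cat{Fin}$ via $\alpha$, use the push--pull identity $*\otimes^{\L}_{\cat{D}(M)}K_*(A)\circ\alpha\simeq \L\alpha_!{*}\otimes^{\L}_{\cat{Fin}}K_*(A)$, and then identify $\L\alpha_!{*}$ with $S\mapsto\on{Sing}(M)^S$ so that the right-hand side becomes Pirashvili's coend by definition; the paper does exactly this and cites an external result for the last identification.

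One correction to the justification you offer for that last step. The functor to be identified is $S\mapsto\on{hocolim}_{U\in\cat{D}(M)}\cat{Fin}(S,\pi_0(U))$, where $\cat{Fin}(S,\pi_0(U))$ consists of \emph{all} maps $S\to\pi_0(U)$, not only injections: several elements of $S$ may label the same disk. Consequently the target of the comparison is the full power $\on{Sing}(M)^S\simeq\on{Sing}(M^S)$, not the configuration space, and the fact you invoke --- that the poset of $|S|$-disk configurations models $\on{Conf}(|S|,M)$, i.e.\ \ref{framed embeddings of disks} --- is not the relevant statement (for $M=\mathbb{R}^d$ and $|S|=2$ the two spaces $M^S\simeq *$ and $\on{Conf}(2,M)\simeq S^{d-1}$ already differ). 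What is needed is a descent statement for the functor $U\mapsto U^S$ over the Weiss cover by disk configurations, allowing coincidences of labels; the paper outsources precisely this to an external proposition rather than deducing it from \ref{framed embeddings of disks} or \ref{left}. The rest of your argument (the monoidality of $K_*(A)$ requiring $d>1$, the bar-complex bookkeeping, and the assembly with the homotopy colimit spectral sequence) matches the paper and is fine.
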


\begin{proof}
The first claim immediately implies the second. 

In order to prove the first claim, we first observe that we have weak equivalences
\[*\otimes^{\L}_{\cat{D}(M)}K_*(A\circ\delta)\simeq \L\alpha_!*\otimes^{\L}_{\cat{Fin}}K_*(A)\]
where $*$ denotes the constant functor with value $*$.

We have $\L\alpha_!*(S)=\on{hocolim}_{U\in\cat{D}(M)}\cat{Fin}(S,\pi_0(U))$. By \cite[Proposition 5.3.]{horeloperads}, this contravariant functor on $\cat{Fin}$ coincides up to weak equivalences with $S\mapsto \on{Sing}(M)^S$.
\end{proof}

\begin{rem}
The spectral sequence above still exists if $K$ does not have a K\"unneth isomorphism as long as $K_*A$ is flat as a $K_*$-module. We leave the details to the interested reader.
\end{rem}

\subsection*{Multiplicative structure}

Let us start with the general homotopy colimit spectral sequence

\begin{prop}
Let $F:\cat{I}\to \Mod_E$ and $G:\cat{J}\to\Mod_E$ be functors. We have the following equivalence
\[\on{hocolim}_{\cat{I}\times\cat{J}}F\otimes_E G\simeq(\on{hocolim}_\cat{I}F)\otimes_E(\on{hocolim}_{\cat{J}}G)\]
\end{prop}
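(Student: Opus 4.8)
The plan is to reduce the statement to two standard facts: that the Bar construction computing a homotopy colimit commutes with the relative tensor product $-\otimes_E-$ degreewise, and that geometric realization of a bisimplicial object commutes with $\otimes_E$ (since $\Mod_E^+$ is a monoidal model category and the tensor preserves realizations and homotopy colimits in each variable when one works with cofibrant objects). First I would replace $F$ and $G$ objectwise by cofibrant functors $QF$ and $QG$; by the last bullet of the structure theorem on $\Mod_E^+$, objectwise cofibrant functors into $\Mod_E^+$ have values cofibrant in the absolute model structure, so the pointwise smash products $QF(i)\otimes_E QG(j)$ compute the derived tensor and, as $\cat{I}\times\cat{J}$ is discrete, $(i,j)\mapsto QF(i)\otimes_E QG(j)$ is an objectwise cofibrant functor computing the left-hand homotopy colimit.

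Next I would write both sides via the two-sided Bar construction. On the one hand,
\[
\on{hocolim}_{\cat{I}\times\cat{J}}\,QF\otimes_E QG\simeq \big|\,\on{B}_\bullet(*,\cat{I}\times\cat{J},QF\otimes_E QG)\,\big|,
\]
and there is an obvious isomorphism of simplicial objects
\[
\on{B}_p(*,\cat{I}\times\cat{J},QF\otimes_E QG)\;\cong\;\on{B}_p(*,\cat{I},QF)\otimes_E \on{B}_p(*,\cat{J},QG),
\]
coming from the fact that a $p$-chain in $\cat{I}\times\cat{J}$ is a pair of $p$-chains and that $\otimes_E$ distributes over the coproducts indexing the Bar construction. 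On the other hand, forming the bisimplicial object $(p,q)\mapsto \on{B}_p(*,\cat{I},QF)\otimes_E\on{B}_q(*,\cat{J},QG)$ and realizing, one gets $(\on{hocolim}_\cat{I}F)\otimes_E(\on{hocolim}_\cat{J}G)$ by realizing one variable at a time and using that $\otimes_E$ commutes with geometric realization (it is a left Quillen bifunctor, hence preserves colimits and, on cofibrant inputs, homotopy colimits in each variable). The diagonal of this bisimplicial object is exactly $\on{B}_\bullet(*,\cat{I}\times\cat{J},QF\otimes_E QG)$, and realization of a bisimplicial object agrees with realization of its diagonal, which closes the argument.

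The main obstacle is the bookkeeping around cofibrancy and homotopy invariance: one must be careful that $\otimes_E$ on the nose commutes with realization only for suitably cofibrant simplicial objects, so the comparison between the diagonal realization and the iterated realization has to be made with Reedy-cofibrant (or at least levelwise cofibrant with cofibrant latching maps) replacements in mind, or else one invokes that $\otimes_E$ is a left Quillen bifunctor and that both bisimplicial objects in play are Reedy cofibrant because $\cat{I},\cat{J}$ are discrete and $QF,QG$ are objectwise cofibrant. Everything else is a formal manipulation of Bar constructions; I expect the proof to occupy only a few lines once these homotopical hypotheses are spelled out.
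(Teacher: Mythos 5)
Your proof is correct and follows essentially the same route as the paper: both identify the diagonal of the bisimplicial object $\on{B}_\bullet(*,\cat{I},F)\otimes_E\on{B}_\bullet(*,\cat{J},G)$ with $\on{B}_\bullet(*,\cat{I}\times\cat{J},F\otimes_E G)$ and then invoke the equivalence of the diagonal realization with the double realization (the paper phrases this as homotopy cofinality of $\Delta\op\to\Delta\op\times\Delta\op$). Your extra care about objectwise cofibrant replacements and Reedy cofibrancy only makes explicit what the paper leaves implicit.
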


\begin{proof}
Assume $F$ and $G$ are objectwise cofibrant. The right-hand side is the homotopy colimit over $\Delta\op\times\Delta\op$ of
\[\on{B}_\bullet(*,\cat{I},F)\otimes\on{B}_\bullet(*,\cat{J},G)\]
The diagonal of this bisimplicial object is exactly
\[\on{B}_\bullet(*,\cat{I}\times\cat{J},F\otimes_EG)\]

Since $\Delta\op\to\Delta\op\times\Delta\op$ is homotopy cofinal, we are done.
\end{proof}

We denote by $\on{E}^r_{**}(\cat{I},F)$ the spectral sequence computing the homotopy colimit of $F$.

\begin{prop}
We keep the notations of the previous proposition. There is a pairing of spectral sequences of $E_*$-modules
\[\on{E}^r_{**}(\cat{I},F)\otimes_{E_*} \on{E}^r_{**}(\cat{J},G)\to\on{E}^r_{**}(\cat{I}\times\cat{J},F\otimes_E G)\]
\end{prop}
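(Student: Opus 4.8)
The plan is to derive the pairing of spectral sequences directly from the equivalence of the previous proposition together with the standard multiplicative structure on the spectral sequence of a (bi)simplicial object. First I would reduce to a bisimplicial picture: assume $F$ and $G$ are objectwise cofibrant, so that $\on{hocolim}_{\cat{I}}F\simeq|\on{B}_\bullet(*,\cat{I},F)|$ and similarly for $G$. Then the previous proposition identifies $\on{hocolim}_{\cat{I}\times\cat{J}}(F\otimes_E G)$ with the diagonal of the bisimplicial object $\on{B}_\bullet(*,\cat{I},F)\otimes_E\on{B}_\bullet(*,\cat{J},G)$, via the homotopy cofinality of the diagonal $\Delta\op\to\Delta\op\times\Delta\op$.

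Next I would invoke the classical fact that for two simplicial objects $X_\bullet$, $Y_\bullet$ in $\Mod_E$ the levelwise smash product, viewed as a bisimplicial object, carries an Eilenberg--Zilber/shuffle comparison between its diagonal and the two iterated realizations, inducing on the associated spectral sequences a pairing
\[\on{E}^r_{**}(X_\bullet)\otimes_{E_*}\on{E}^r_{**}(Y_\bullet)\to\on{E}^r_{**}(\mathrm{diag}(X_\bullet\otimes_E Y_\bullet)).\]
Applying this with $X_\bullet=\on{B}_\bullet(*,\cat{I},F)$ and $Y_\bullet=\on{B}_\bullet(*,\cat{J},G)$, and using the identification of the diagonal above, yields exactly the desired map
\[\on{E}^r_{**}(\cat{I},F)\otimes_{E_*}\on{E}^r_{**}(\cat{J},G)\to\on{E}^r_{**}(\cat{I}\times\cat{J},F\otimes_E G).\]
One should check that on $\on{E}^2$ this recovers the external product in the homology of small categories (which it does, since the shuffle map computes the Künneth map of the normalized chain complexes), and that it converges to the evident product on homotopy groups induced by $\otimes_E$; both are formal consequences of the construction.

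The main obstacle I expect is purely bookkeeping rather than conceptual: one must be careful that the shuffle comparison is natural and compatible with the cofinality equivalence $\Delta\op\simeq\Delta\op\times\Delta\op$, so that the pairing is well defined independently of the choices of cofibrant replacements, and that the filtrations match up (the total degree on the diagonal is the sum of the two simplicial degrees, which is why the pairing has the stated bidegree behavior, $\on{E}^r_{s,t}\otimes\on{E}^r_{s',t'}\to\on{E}^r_{s+s',t+t'}$). Since the paper only needs the existence of such a pairing, I would keep these verifications brief and cite the standard reference for the multiplicative structure on the spectral sequence of a bisimplicial object, noting that everything is compatible with the $E_*$-linear structure because $-\otimes_E-$ is.
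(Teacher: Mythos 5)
Your proposal is correct and is essentially the argument the paper has in mind: the paper's proof is a one-line appeal to ``the standard fact about pairings of spectral sequences associated to simplicial objects,'' and your write-up simply unpacks that fact (bar resolutions, the diagonal of the levelwise tensor product, and the Eilenberg--Zilber/shuffle comparison) in the way intended. No gap; if anything you supply more detail than the paper does.
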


\begin{proof}
The result is a standard fact about pairing of spectral sequences associated to simplicial objects.
\end{proof}

Let us specialize to the case of factorization homology. We consider an $\oper{E}_d$-algebra $A$ in $\Mod_E$ a homology theory with $\mathbb{Z}/2$-equivariant K\"unneth isomorphism $K$ and a framed manifold of dimension $d$ $M$. We denote by $\on{E}^r_{**}(M,A,K)$ the spectral sequence of the previous section.

\begin{prop}
Let $M$ and $N$ be two framed $d$-manifolds. There is a pairing of spectral sequences
\[\on{E}^r_{**}(M,A,K)\otimes_{K_*E}\on{E}^r_{**}(N,A,K)\to \on{E}^r_{**}(M\sqcup N,A,K)\]
\end{prop}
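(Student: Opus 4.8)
The plan is to reduce the statement to the general pairing of spectral sequences established in the two preceding propositions, the only genuinely new inputs being a geometric identification of the indexing posets and the Künneth isomorphism for $K$.

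The key geometric observation is that, since $M$ and $N$ are disjoint, an open subset of $M\sqcup N$ diffeomorphic to a disjoint union of disks is precisely a pair consisting of such a subset of $M$ and such a subset of $N$; hence there is a canonical isomorphism of posets $\cat{D}(M\sqcup N)\cong\cat{D}(M)\times\cat{D}(N)$. Since the parametrization functor $\delta\colon\cat{D}(M\sqcup N)\to\cat{E}_d$ is unique up to homotopy, I would choose it to be the composite of $\delta_M\times\delta_N$ with the symmetric monoidal product $\cat{E}_d\times\cat{E}_d\to\cat{E}_d$. Because an $\oper{E}_d$-algebra induces a symmetric monoidal functor $\cat{E}_d\to\Mod_E$, the composite $\cat{D}(M\sqcup N)\xrightarrow{\delta}\cat{E}_d\xrightarrow{A}\Mod_E$ is then weakly equivalent, as a functor on $\cat{D}(M)\times\cat{D}(N)$, to the external tensor product $(A\circ\delta_M)\otimes_E(A\circ\delta_N)$; this is the functor-level refinement of the fact that $\int_{-}A$ is symmetric monoidal. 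We are thus in the situation of the two preceding propositions, with $\cat{I}=\cat{D}(M)$, $\cat{J}=\cat{D}(N)$, $F=A\circ\delta_M$ and $G=A\circ\delta_N$.

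Next I would unwind the definition of the spectral sequence. Writing $X^M_\bullet=\on{B}_\bullet(*,\cat{D}(M),Q(A\circ\delta_M))$ and similarly for $N$ and $M\sqcup N$, the spectral sequence $\on{E}^r_{**}(M,A,K)$ is by construction the one associated to the simplicial spectrum $[p]\mapsto K\wedge X^M_p$. By the identification above and the proof of the first of the two preceding propositions, $X^{M\sqcup N}_\bullet$ is weakly equivalent to the diagonal of the bisimplicial spectrum $X^M_\bullet\otimes_E X^N_\bullet$. Smashing levelwise with $K$ and using the multiplication of $K$ together with the natural map $-\wedge-\to-\otimes_E-$ of $E$-modules, one obtains, for each pair $(p,q)$, a natural map
\[(K\wedge X^M_p)\wedge(K\wedge X^N_q)\longrightarrow K\wedge(X^M_p\otimes_E X^N_q),\]
hence a map of bisimplicial spectra whose diagonal is weakly equivalent to $K\wedge X^{M\sqcup N}_\bullet$. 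This places us exactly in the setting of the second of the two preceding propositions, with $E_*$ replaced everywhere by $K_*E$: the $\on{E}^1$-page of $\on{E}^r_{**}(M,A,K)$ in simplicial degree $p$ is $K_*(X^M_p)$, the $\mathbb{Z}/2$-equivariant Künneth isomorphism identifies $K_*(X^M_p)\otimes_{K_*}K_*(X^N_p)$ with $K_*(X^M_p\wedge X^N_p)$, and since each $X^M_p$ lies in $\Mod_E$ and $E$ is commutative this pairing refines to one over $K_*E$. Invoking the same standard fact about pairings of spectral sequences associated to (diagonals of) bisimplicial objects used in the proof of that proposition then yields the desired pairing $\on{E}^r_{**}(M,A,K)\otimes_{K_*E}\on{E}^r_{**}(N,A,K)\to\on{E}^r_{**}(M\sqcup N,A,K)$.

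The step requiring the most care is the functor-level identification $A\circ\delta\simeq(A\circ\delta_M)\otimes_E(A\circ\delta_N)$ on $\cat{D}(M)\times\cat{D}(N)$: one must be attentive to the fact that the parametrizations are only homotopy-unique and to the coherence isomorphisms of the symmetric monoidal functor $A$, which is most cleanly handled by carrying out the comparison at the level of the bar and coend models, after a suitable cofibrant replacement, so that the monoidal comparison becomes an honest objectwise weak equivalence of simplicial spectra. Everything else — that the Künneth isomorphism is precisely what forces the $\on{E}^1$-page of the external product to be a tensor product over $K_*$, that the $\mathbb{Z}/2$-equivariance endows the pairing with its expected graded behaviour, and that the pairing is compatible with all the higher differentials — is routine and identical to the already-cited standard fact.
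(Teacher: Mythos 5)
Your proposal is correct and follows essentially the same route as the paper: the identification $\cat{D}(M\sqcup N)\cong\cat{D}(M)\times\cat{D}(N)$, the identification of $A\circ\delta$ with the external tensor product of $A\circ\delta_M$ and $A\circ\delta_N$, and then an appeal to the two preceding propositions on pairings of homotopy-colimit spectral sequences. The paper states this in one sentence; you have merely filled in the details of how the K\"unneth isomorphism and the bisimplicial bar constructions enter, which is consistent with the cited "standard fact."
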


\begin{proof}
This follows from the previous proposition as well as the observation that $\cat{D}(M\sqcup N)\cong\cat{D}(M)\times\cat{D}(N)$ and the fact that $A\otimes_EA$ as a functor on $\cat{D}(M)\times\cat{D}(N)$ is equivalent to $A$ as a functor on $\cat{D}(M\sqcup N)$.
\end{proof}

In other words, we have proved that the spectral sequence $\on{E}^r_{**}(M,A,K)$ is a lax monoidal functor of the variable $M$. In particular it preserves associative algebras.

Assume now that $M$ is an associative algebra up to isotopy in $f\Man_d$. One possible example is to take $M= N\times\mathbb{R}$ with $N$ a $d$-framed $(d-1)$-manifold. In that case, $M$ is an $\oper{E}_1$-algebra in $f\Man_d$.

\begin{prop}
Let $M$ be an associative algebra up to isotopy of dimension at least $2$. The spectral sequence $\on{E}^r_{**}(M,A,K)$ has a commutative multiplicative structure converging to the associative algebra structure on $K_*\int_{M}A$. 

On the $\on{E}^2$-page, this multiplication is induced by the unique commutative algebra structure on $\on{Sing}(M)$ in the category $(\S,\sqcup)$.

Moreover this structure is functorial with respect to embeddings of $d$-manifolds $M\to M'$ preserving the multiplication up to isotopy.
\end{prop}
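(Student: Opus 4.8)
The plan is to build the multiplication of spectral sequences out of the lax monoidal structure of $\on{E}^r_{**}(-,A,K)$ in the manifold variable, to identify it on the $\on{E}^2$-page with the codiagonal of $\on{Sing}(M)$ transported through Pirashvili homology, and then to propagate graded-commutativity from the $\on{E}^2$-page to all later pages using the rigidity of morphisms of spectral sequences. First I would record the multiplicative structure itself: an associative-algebra-up-to-isotopy structure on $M$ consists of an isotopy class of embeddings $m\colon M\sqcup M\to M$ together with a unit $\varnothing\to M$, satisfying associativity and unitality up to isotopy. Since $\on{E}^r_{**}(-,A,K)$ is invariant under isotopies of its manifold argument, composing the pairing of spectral sequences from the previous proposition with $m_*$ yields a well-defined pairing
\[\mu\colon \on{E}^r_{**}(M,A,K)\otimes_{K_*E}\on{E}^r_{**}(M,A,K)\longrightarrow \on{E}^r_{**}(M,A,K),\]
and associativity and unitality of $m$ up to isotopy make this into an associative, unital algebra object in spectral sequences of $K_*E$-modules. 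Applying the symmetric monoidal functor $\int_{-}A$ to $M$ exhibits $\int_M A$ as an $\oper{E}_1$-algebra in $\Mod_E$, so $K_*\int_M A$ is an associative graded $K_*E$-algebra, and by construction $\mu$ converges to this multiplication.

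Next I would analyze the $\on{E}^2$-page. Because $\dim M=d\ge 2$, the earlier proposition gives that $K_*A$ is a \emph{commutative} algebra in $K_*E$-modules, so by \ref{spectral sequence} there is an identification $\on{E}^2_{**}(M,A,K)\cong \on{HH}^{\on{Sing}(M)}_{**}(K_*A|K_*E)$. The key compatibility to check is that under this identification the pairing of spectral sequences restricts on $\on{E}^2$ to the (lax symmetric monoidal) structure map $\on{HH}^{X}_{**}\otimes\on{HH}^{Y}_{**}\to\on{HH}^{X\sqcup Y}_{**}$ of Pirashvili homology — this is a direct comparison of bar-construction models, using $\cat{D}(M\sqcup N)\cong\cat{D}(M)\times\cat{D}(N)$ and the isomorphism $\Map(-,X)\otimes\Map(-,Y)\cong\Map(-,X\sqcup Y)$ in $\on{Fun}(\cat{Fin}\op,\S)$. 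Then I would use unitality of $m$ to see that the restriction of $m$ to either copy of $M$ is isotopic to $\id_M$, hence $\on{Sing}(m)\colon\on{Sing}(M)\sqcup\on{Sing}(M)\to\on{Sing}(M)$ is homotopic to the codiagonal $\nabla$. Since every object of the cocartesian monoidal category $(\S,\sqcup)$ is a commutative monoid in a unique way (multiplication $=\nabla$, unit the map from $\varnothing$), applying the symmetric monoidal functor $X\mapsto\on{HH}^X_{**}(K_*A|K_*E)$ to this structure equips $\on{E}^2_{**}(M,A,K)$ with a graded-commutative multiplication, and the discussion above shows it agrees with $\mu$ on $\on{E}^2$. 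This is exactly the description of the $\on{E}^2$-multiplication claimed in the statement (and it is precisely here that $d\ge 2$ is used: in dimension $1$ the $\on{E}^2$-page is ordinary Hochschild homology, which is not commutative).

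Finally I would promote commutativity to the higher pages and address functoriality. Let $\tau$ be the symmetry isomorphism of the tensor product $\on{E}^r_{**}(M,A,K)\otimes_{K_*E}\on{E}^r_{**}(M,A,K)$; it is a morphism of spectral sequences, carrying the usual Koszul sign on $\on{E}^2$. Then $\mu$ and $\mu\circ\tau$ are two morphisms of spectral sequences with the same source and target which, by the previous paragraph, coincide on $\on{E}^2$; since the value of a morphism of spectral sequences on $\on{E}^{r+1}$ is determined by its value on $\on{E}^r$, they coincide on every page $\on{E}^r$ with $r\ge 2$. Hence $\mu$ is graded-commutative on all these pages; it still converges only to the a priori associative (not necessarily commutative) multiplication on $K_*\int_M A$, which is consistent since the associated graded of the filtration is necessarily commutative and thus insensitive to passing to the opposite algebra. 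For functoriality, given an embedding $f\colon M\to M'$ respecting the multiplications up to isotopy, naturality of the pairing of spectral sequences and of $m_*$ in the manifold variable shows that $f_*$ is a map of algebra objects in spectral sequences, and compatibility with the $\on{E}^2$-picture follows from the functoriality of $\on{HH}^{X}_{**}(K_*A|K_*E)$ in $X=\on{Sing}(M)$. I expect the only genuinely delicate point to be the second step — matching the two lax monoidal structures on the $\on{E}^2$-page and identifying $\on{Sing}(m)$ with the codiagonal; once that is in place, graded-commutativity on all later pages is formal.
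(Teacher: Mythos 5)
Your proposal is correct and follows essentially the same route as the paper: the multiplication comes from the pairing of the previous proposition applied to the (isotopy class of the) product embedding $M\sqcup M\to M$, the $\on{E}^2$-identification comes from matching the two lax monoidal structures via $\cat{D}(M\sqcup N)\cong\cat{D}(M)\times\cat{D}(N)$, and commutativity propagates from the $\on{E}^2$-page to all later pages. The only (cosmetic) difference is in that last step: the paper argues that $\on{E}^{r+1}$ is the homology of the commutative differential graded algebra $\on{E}^r$ and is therefore commutative, whereas you compare the two pairings $\mu$ and $\mu\circ\tau$ and use that a pairing of spectral sequences on page $r+1$ is determined by its value on page $r$ — both are valid.
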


\begin{proof}
According to the previous proposition there is a multiplicative structure on the spectral sequence converging to the associative algebra structure on $K_*\int_{M}A$. 

It is easy to see that the multiplication on the $\on{E}^2$-page is what is stated. Since $\on{Sing}(M)$ is commutative, the multiplication on the $\on{E}^2$-page is commutative. The homology of a commutative differential graded algebra is a commutative algebra, therefore the multiplication is commutative on each page.

The functoriality is clear.
\end{proof}

Now we want to construct an edge homomorphism

Let $S$ be a $(d-1)$-manifold with a $d$-framing $\tau$. Let $\phi$ be a framed embedding of $\mathbb{R}^{d-1}\times\mathbb{R}$ into $S\times{\mathbb{R}}$ commuting with the projection to $\mathbb{R}$. Applying factorization homology we get a map of $\oper{E}_1$-algebras:
\[u_\phi:A\cong \int_{\mathbb{R}^{d-1}\times\mathbb{R}} A\to\int_{S\times\mathbb{R}}A\]

On the other hand for any point $x$ of $S\times\mathbb{R}$, we get a morphism of commutative algebra over $K_*E$:
\[u_x:K_*(A)\cong \on{HH}^{\on{pt}}(K_*A|K_*E)\to\on{HH}^{\on{Sing}(S)}(K_*A|K_*E)\]

\begin{prop}
For any framed embedding $\phi:\mathbb{R}^{d-1}\times\mathbb{R}\to S\times\mathbb{R}$, there is an edge homomorphism
\[K_*A\to \on{E}^r_{0,*}(S\times\mathbb{R},A,K)\]
On the $\on{E}^2$-page it is identified with the $K_*E$-algebra homomorphism
\[u_{\phi(0,0)}:K_*(A)\to\on{HH}^{\on{pt}}(K_*A|K_*E)\to\on{HH}^{\on{Sing}(S)}(K_*A|K_*E)\]
and it converges to the $K_*E$-algebra homomorphism
\[K_*(u_\phi):K_*A\to K_*\int_{N\times\mathbb{R}}A\]
\end{prop}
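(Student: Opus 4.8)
The plan is to obtain the edge homomorphism from the functoriality of the spectral sequence in the manifold variable, after observing that over a single disk the spectral sequence collapses. Since $\phi$ is a framed embedding of the framed disk $\mathbb{R}^{d-1}\times\mathbb{R}\cong\mathbb{R}^d$ into $S\times\mathbb{R}$, it induces a morphism of spectral sequences
\[\on{E}^r_{**}(\mathbb{R}^{d-1}\times\mathbb{R},A,K)\to\on{E}^r_{**}(S\times\mathbb{R},A,K),\]
which, when $\phi$ commutes with the projection to $\mathbb{R}$, moreover preserves the multiplicative structures by the last assertion of the previous proposition. By \ref{left} and \ref{spectral sequence}, the source is the spectral sequence with $\on{E}^2_{s,t}\cong\on{HH}^{\on{Sing}(\mathbb{R}^d)}_{s,t}(K_*A|K_*E)$ converging to $K_{s+t}(\int_{\mathbb{R}^d}A)$. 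As $\mathbb{R}^d$ is contractible, $\on{HH}^{\on{Sing}(\mathbb{R}^d)}(K_*A|K_*E)\simeq\on{HH}^{\on{pt}}(K_*A|K_*E)\cong K_*A$, concentrated in simplicial degree $0$; hence this source spectral sequence is concentrated in the column $s=0$, it collapses at $\on{E}^2$, and its own edge homomorphism is an isomorphism $\on{E}^r_{0,*}(\mathbb{R}^d,A,K)\cong K_*A\cong K_*(\int_{\mathbb{R}^d}A)$ for every $r$ (the last identification being $\int_{\mathbb{R}^d}A\simeq A$). I would then \emph{define} the desired edge homomorphism $K_*A\to\on{E}^r_{0,*}(S\times\mathbb{R},A,K)$ to be the morphism of spectral sequences above, restricted to the column $s=0$ and read through this identification of its source.

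Next I would identify this map on the $\on{E}^2$-page. For this one needs the identification of \ref{spectral sequence} to be natural in framed embeddings of $d$-manifolds, i.e. that the comparison $*\otimes^{\L}_{\cat{D}(M)}K_*(A\circ\delta)\simeq\on{Sing}(M)^{(-)}\otimes^{\L}_{\cat{Fin}}K_*A$ is natural in $M$; this in turn comes down to the naturality in $M$ of \cite[Proposition 5.3.]{horeloperads}. Granting it, on $\on{E}^2$ our map becomes the morphism
\[\on{HH}_0^{\on{Sing}(\mathbb{R}^d)}(K_*A|K_*E)\to\on{HH}_0^{\on{Sing}(S\times\mathbb{R})}(K_*A|K_*E)\]
induced by $\on{Sing}(\phi)$. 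Since $\mathbb{R}^d$ is contractible and $\phi$ carries the center $0$ to $\phi(0,0)$, the map $\on{Sing}(\phi)$ is homotopic to the inclusion of the point $\phi(0,0)$, and the projection $S\times\mathbb{R}\to S$ identifies $\on{HH}^{\on{Sing}(S\times\mathbb{R})}$ with $\on{HH}^{\on{Sing}(S)}$ compatibly with this point. Hence on the $\on{E}^2$-page the map is exactly $u_{\phi(0,0)}\colon K_*(A)\cong\on{HH}^{\on{pt}}(K_*A|K_*E)\to\on{HH}^{\on{Sing}(S)}(K_*A|K_*E)$, as required.

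For the abutment I would use that a morphism of spectral sequences converges to the induced map of abutments; here the latter is $K_*(\int_{\mathbb{R}^d}A)\to K_*(\int_{S\times\mathbb{R}}A)$ obtained by applying $K_*(\int_{-}A)$ to $\phi$, which is $K_*(u_\phi)$ after the identification $\int_{\mathbb{R}^d}A\simeq A$. Finally, for the algebra assertions: when $\phi$ commutes with the projection to $\mathbb{R}$, the map $u_\phi$ is a morphism of $\oper{E}_1$-algebras and the morphism of spectral sequences above is multiplicative, so both $K_*(u_\phi)$ and the edge homomorphism are $K_*E$-algebra maps; and $u_{\phi(0,0)}$ is a composite of maps of commutative $K_*E$-algebras since $X\mapsto\on{HH}^X(K_*A|K_*E)$ is a symmetric monoidal functor of $X$. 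I expect the one genuinely delicate point to be the naturality in the manifold variable of the identification in \ref{spectral sequence} (equivalently, of \cite[Proposition 5.3.]{horeloperads}); with that in hand the rest is formal. A more hands-on alternative would bypass the functoriality of the whole spectral sequence and read off the edge map directly from the wedge summand indexed by the single disk $\phi(\mathbb{R}^{d-1}\times\mathbb{R})\in\cat{D}(S\times\mathbb{R})$ inside the $0$-simplices of the bar construction computing $\int_{S\times\mathbb{R}}A$, but matching that summand with evaluation at $\phi(0,0)$ on the $\on{E}^2$-page requires the same analysis of \cite[Proposition 5.3.]{horeloperads}.
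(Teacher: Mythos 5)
Your proposal is correct and takes essentially the same route as the paper: the paper's proof likewise observes that the spectral sequence for $\mathbb{R}^{d-1}\times\mathbb{R}$ has its $\on{E}^2$-page concentrated in the $0$-th column, hence degenerates for degree reasons, and then deduces everything from the functoriality of the spectral sequence applied to $\phi$. Your write-up simply makes explicit the details (in particular the naturality of the $\on{E}^2$-identification in the manifold variable) that the paper leaves implicit.
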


\begin{proof}
The spectral sequence computing $K_*\int_{\mathbb{R}^{d-1}\times\mathbb{R}}A$ has its $\on{E}^2$-page $K_*A$ concentrated on the $0$-th column. For degree reason, it degenerates. Then the result follows directly from the functoriality of the spectral sequence applied to the map $\phi$.
\end{proof}

Note that the edge homomorphism only depends on the connected component of the image of $\phi$.

In the case of the sphere $S^{d-1}\times\mathbb{R}$ with the framing $\kappa$, we can say more:

\begin{lemm}\label{splitting}
For any framed embedding $\phi:\mathbb{R}^{d-1}\times\mathbb{R}\to (S^{d-1}\times\mathbb{R})_\kappa$ commuting with the projection to $\mathbb{R}$, the map
\[u_\phi:A\to\int_{S^{d-1}\times\mathbb{R}} A\]
has a section in the homotopy category of $\Mod_E$
\end{lemm}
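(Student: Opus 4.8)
The plan is to produce the splitting from a concrete geometric map that ``fills in the puncture'' of the cylinder. First I would identify the framed manifold $(S^{d-1}\times\mathbb{R})_\kappa$ with standard punctured Euclidean space. Recall that $\kappa$ is the $d$-framing of $S^{d-1}$ obtained from the trivialization $TS^{d-1}\oplus\mathbb{R}\cong TS^{d-1}\oplus\nu = T\mathbb{R}^d_{|S^{d-1}}\cong\underline{\mathbb{R}^d}$, where $\nu$ is the normal bundle of $S^{d-1}\subset\mathbb{R}^d$ trivialized by the outward unit normal. Computing the derivative of the diffeomorphism
\[g\colon S^{d-1}\times\mathbb{R}\longrightarrow\mathbb{R}^d\setminus\{0\},\qquad g(x,t)=e^{t}x,\]
one checks that $g$ carries the framing of $(S^{d-1}\times\mathbb{R})_\kappa$ to the restriction of the standard framing of $\mathbb{R}^d$ rescaled by the everywhere-positive function $y\mapsto|y|$, hence to a framing homotopic to the standard one. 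Since the mapping spaces of $f\Man_d$ are homotopy pullbacks that depend on framings only through the induced map to $\Map_{\on{GL}(d)}(\on{Fr}(TM),\on{Fr}(TN))$, a homotopy of framings yields an equivalence in $f\Man_d$; so I may replace $(S^{d-1}\times\mathbb{R})_\kappa$ by $W:=\mathbb{R}^d\setminus\{0\}$ equipped with the restriction of the standard framing, and $\phi$ by a framed embedding $\mathbb{R}^{d-1}\times\mathbb{R}\to W$.

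Next, the open inclusion $j\colon W\hookrightarrow\mathbb{R}^d$ is tautologically a framed embedding, and $\int_{\mathbb{R}^d}A\simeq A$ because the right $\cat{E}_d$-module $\Emb_f(-,\mathbb{R}^d)$ is corepresented by the object $1$, so the coend of Definition \ref{factorization} computes $A(1)\simeq A$ (using cofibrancy of $A$). Applying $\int_{-}A$ therefore produces a candidate retraction in $\on{Ho}(\Mod_E)$:
\[r\colon\int_{S^{d-1}\times\mathbb{R}}A\;\simeq\;\int_{W}A\;\xrightarrow{\,\int_{j}A\,}\;\int_{\mathbb{R}^d}A\;\simeq\;A.\]
Then I would verify $r\circ u_\phi\simeq\id_A$. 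By functoriality of factorization homology, $r\circ u_\phi$ is — up to the canonical equivalences identifying $\int_{\mathbb{R}^{d-1}\times\mathbb{R}}A$ and $\int_{\mathbb{R}^d}A$ with $A$ — the image under $\int_{-}A$ of the composite framed self-embedding $j\circ\phi\colon\mathbb{R}^{d-1}\times\mathbb{R}\cong\mathbb{R}^d\to\mathbb{R}^d$. This lies in $\Emb_f(\mathbb{R}^d,\mathbb{R}^d)\cong\Emb_f(D,D)=\oper{E}_d(1)$, which is weakly contractible by \ref{framed embeddings of disks}; hence $j\circ\phi$ is isotopic to $\id_{\mathbb{R}^d}$ through framed embeddings, and since $\int_{-}A$ is a simplicial functor on $f\Man_d$ it sends isotopic embeddings to equal morphisms in $\on{Ho}(\Mod_E)$, so $r\circ u_\phi\simeq\int_{\id}A=\id_A$, which gives the lemma.

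The main obstacle is the framing bookkeeping in the first step: one must check that $\kappa$ really matches the standard framing of $\mathbb{R}^d\setminus\{0\}$ \emph{up to homotopy} — and not merely up to a component of $\on{GL}(d)$, such as an orientation reversal, which would obstruct the identification — and that a homotopy of framings genuinely produces an equivalence in Andrade's topological category $f\Man_d$. Everything downstream is formal once that identification is in place; in particular the hypothesis that $\phi$ commute with the projection to $\mathbb{R}$ (needed elsewhere to make $u_\phi$ a map of $\oper{E}_1$-algebras) is irrelevant here, since connectivity of $\Emb_f(\mathbb{R}^d,\mathbb{R}^d)$ handles an arbitrary framed embedding.
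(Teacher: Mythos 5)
Your proposal is correct and follows essentially the same route as the paper: the paper's proof uses the framed embedding $S^{d-1}\times\mathbb{R}\to\mathbb{R}^d$, $(\theta,x)\mapsto e^x\theta$ (which is exactly your $j\circ g$), and concludes from the contractibility of $\Emb_f(\mathbb{R}^d,\mathbb{R}^d)$ that the composite with $\phi$ is isotopic to the identity. You merely spell out the framing bookkeeping that the paper compresses into the sentence ``This embedding preserves the framing.''
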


\begin{proof}
There is an embedding:
\[S^{d-1}\times\mathbb{R}\to\mathbb{R}^d\]
sending $(\theta,x)$ to $e^x\theta$. This embedding preserves the framing. Moreover, the composite:
\[\mathbb{R}^d\stackrel{\phi}{\rightarrow}S^{d-1}\times\mathbb{R}\to\mathbb{R}^d\]
is isotopic to the identity (because $\Emb_f(\mathbb{R}^d,\mathbb{R}^d)$ is contractible). We can apply $\int_{-}A$ to this sequence of morphisms of framed manifolds and we obtain the desired splitting.
\end{proof}

Although we will not need it, this has the following corollary:

\begin{coro}
The image of the edge homomorphism in $\on{E}^r_{**}((S^{d-1}\times\mathbb{R})_\kappa,A,K)$ consists of permanent cycles.
\end{coro}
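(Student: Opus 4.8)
The plan is to exploit the fact that the edge homomorphism was, in the proof of the preceding proposition, constructed as (the $0$-th column of) a morphism of spectral sequences whose source is the \emph{degenerate} spectral sequence computing $K_*\bigl(\int_{\mathbb{R}^{d-1}\times\mathbb{R}}A\bigr)$. So the strategy is simply: a morphism of spectral sequences sends permanent cycles to permanent cycles, and in a degenerate spectral sequence \emph{every} element is a permanent cycle; hence the image of the edge homomorphism consists of permanent cycles.

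First I would recall from the previous proof that the edge homomorphism is obtained by applying the functor $\on{E}^r_{**}(-,A,K)$ to the framed embedding $\phi\colon\mathbb{R}^{d-1}\times\mathbb{R}\to(S^{d-1}\times\mathbb{R})_\kappa$ and restricting to the column $s=0$; that is, it is the $0$-column part of the morphism of spectral sequences
\[\phi_*\colon\on{E}^r_{**}(\mathbb{R}^{d-1}\times\mathbb{R},A,K)\longrightarrow\on{E}^r_{**}\bigl((S^{d-1}\times\mathbb{R})_\kappa,A,K\bigr)\]
provided by the functoriality established just before the statement. Next I would observe that the source spectral sequence is concentrated in the column $s=0$ on the $\on{E}^2$-page: since $\on{Sing}(\mathbb{R}^{d-1}\times\mathbb{R})\simeq\on{pt}$, Proposition \ref{spectral sequence} identifies $\on{E}^2_{s,*}(\mathbb{R}^{d-1}\times\mathbb{R},A,K)$ with $\on{HH}^{\on{pt}}_s(K_*A\,|\,K_*E)$, which is $K_*A$ for $s=0$ and vanishes for $s>0$. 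Consequently all differentials in $\on{E}^r_{**}(\mathbb{R}^{d-1}\times\mathbb{R},A,K)$ vanish for degree reasons, so every element of this spectral sequence is a permanent cycle.

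Finally I would invoke the standard fact that a morphism of spectral sequences commutes with every differential and induces compatible maps on all pages, hence carries permanent cycles to permanent cycles. Applying this to $\phi_*$, the image of the edge homomorphism — being $\phi_*$ of (part of) a spectral sequence all of whose elements are permanent cycles — consists of permanent cycles. (Alternatively, one can see this through Lemma \ref{splitting}: the framed embedding $g\colon(S^{d-1}\times\mathbb{R})_\kappa\to\mathbb{R}^d$, $(\theta,x)\mapsto e^x\theta$, satisfies $g\circ\phi\simeq\id$ through framed embeddings, so $g_*\circ\phi_*$ is the identity on the degenerate spectral sequence $\on{E}^r_{**}(\mathbb{R}^{d-1}\times\mathbb{R},A,K)$, exhibiting $\phi_*$ as a split monomorphism of spectral sequences with degenerate source.) There is no serious obstacle here; the only inputs are the identification of the $\on{E}^2$-page over $\mathbb{R}^{d-1}\times\mathbb{R}$ from Proposition \ref{spectral sequence} and the elementary compatibility of morphisms of spectral sequences with differentials.
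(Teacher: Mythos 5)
The paper gives no separate proof here: the corollary is stated as an immediate consequence of Lemma \ref{splitting}, so the intended argument is precisely the one you relegate to your final parenthesis. Your main line of argument is problematic. In this homologically indexed spectral sequence the differentials go $d_r\colon \on{E}^r_{s,t}\to\on{E}^r_{s-r,t+r-1}$, so \emph{every} element of the column $s=0$ lies in the kernel of all outgoing differentials for degree reasons; if ``permanent cycle'' only means ``killed by every differential'', the corollary is vacuous and needs neither the morphism of spectral sequences nor the splitting lemma. The only reading under which the statement has content --- and under which it is genuinely a corollary of Lemma \ref{splitting} --- is that the image classes \emph{survive} to $\on{E}^\infty$, i.e.\ are not hit by the incoming differentials $d_r\colon\on{E}^r_{r,t-r+1}\to\on{E}^r_{0,t}$. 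For that statement your main argument is insufficient: a morphism of spectral sequences preserves ``being a cycle'' but not ``not being a boundary'', so degeneracy of the source spectral sequence says nothing about whether $\phi_*(x)$ gets killed in the target.

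Your parenthetical remark is the actual proof, and it is the paper's: by Lemma \ref{splitting} the polar-coordinate embedding $g\colon(S^{d-1}\times\mathbb{R})_\kappa\to\mathbb{R}^d$ satisfies $g\circ\phi\simeq\id$ through framed embeddings, so $g_*\circ\phi_*$ is the identity of the degenerate spectral sequence of $\mathbb{R}^{d}$. If $\phi_*(x)$ were a boundary on some page, then $x=g_*\phi_*(x)$ would vanish on the next page, contradicting degeneracy of the source; hence every class in the image of the edge homomorphism persists nontrivially to $\on{E}^\infty$. You should promote this from an aside to the argument, and drop the first one.
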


\begin{rem}
Our geometric description of higher Hochschild cohomology in \ref{Hom over cylinder} can be used to construct a similar spectral sequence calculating $K_*\HC{\oper{E}_d}(A)$ and whose $\on{E}_2$-page is a cohomological version of higher Hochschild cohomology defined in \cite{ginothigher}. However, this spectral sequence does not always converge.
\end{rem}

\section{Computations}

\begin{prop}\label{etale descent}
Let $A_*$ be a degreewise projective commutative graded algebra over a commutative graded ring $R_*$. Assume that $A_*$ is a filtered colimit of \'etale algebras over $R_*$. Then,  for all $d\geq 1$, the unit map
\[A_*\to\on{HH}^{S^d}(A_*|R_*)\]
is a quasi isomorphism of commutative $R_*$-algebras.
\end{prop}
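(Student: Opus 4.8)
The plan is to reduce first to the case where $A_*$ is itself \'etale over $R_*$, and then to run an induction on $d$ using the pushout formula \ref{HH pushout}, with the \'etale hypothesis entering only through the base case $d=1$.

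For the reduction, write $A_*=\on{colim}_j B_{j,*}$ as a filtered colimit of \'etale $R_*$-algebras; each $B_{j,*}$ is flat and finitely presented over $R_*$, hence a projective graded $R_*$-module, so it satisfies the standing hypotheses under which $\on{HH}^{S^d}(-\,|R_*)$ is computed by the bar construction. Since that model is built from direct sums, tensor products and geometric realization, the functor $B\mapsto \on{HH}^{S^d}(B|R_*)$ commutes with filtered colimits, and the unit map is natural in the algebra variable. As filtered colimits are exact, $A_*\to\on{HH}^{S^d}(A_*|R_*)$ is then the filtered colimit of the maps $B_{j,*}\to\on{HH}^{S^d}(B_{j,*}|R_*)$, so it suffices to treat an \'etale $A_*$.

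For $d=1$, Corollary \ref{Hochschild homology} identifies $\on{HH}^{S^1}(A_*|R_*)$ with the Hochschild chains $A_*\otimes^{\L}_{A_*\otimes_{R_*}A_*}A_*$, compatibly with the unit map. \'Etaleness of $A_*$ means it is flat and unramified over $R_*$, so the multiplication $A_*\otimes_{R_*}A_*\to A_*$ makes $A_*$ into a projective $A_*\otimes_{R_*}A_*$-module; hence $\on{Tor}^{A_*\otimes_{R_*}A_*}_{>0}(A_*,A_*)=0$, the derived tensor product is underived and equal to $A_*$, and the unit map is a quasi-isomorphism. For the inductive step, assume the result for $d-1\geq 1$ and write $S^d$ as the homotopy pushout of $\on{pt}\leftarrow S^{d-1}\to\on{pt}$. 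Then \ref{HH pushout} gives
\[\on{HH}^{S^d}(A_*|R_*)\simeq\bigl|\on{B}_\bullet\bigl(A_*,\ \on{HH}^{S^{d-1}}(A_*|R_*),\ A_*\bigr)\bigr|,\]
where each outer copy of $A_*=\on{HH}^{\on{pt}}(A_*|R_*)$ is a module over $\on{HH}^{S^{d-1}}(A_*|R_*)$ via restriction along the augmentation $\varepsilon\colon\on{HH}^{S^{d-1}}(A_*|R_*)\to\on{HH}^{\on{pt}}(A_*|R_*)=A_*$ induced by $S^{d-1}\to\on{pt}$. Since $\varepsilon$ precomposed with the unit is the identity of $A_*$, the inductive hypothesis forces $\varepsilon$ to be a quasi-isomorphism. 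Moreover $\on{HH}^{S^{d-1}}(A_*|R_*)$ is assembled from $A_*$ by direct sums, tensor products and realization, so it is degreewise projective over $R_*$ and in particular flat; hence tensoring $\varepsilon$ over $R_*$ with flat modules preserves quasi-isomorphisms, and the induced map of simplicial objects from $\on{B}_\bullet(A_*,\on{HH}^{S^{d-1}}(A_*|R_*),A_*)$ to $\on{B}_\bullet(A_*,A_*,A_*)$ is a levelwise quasi-isomorphism. Taking realizations and using the extra degeneracy of $\on{B}_\bullet(A_*,A_*,A_*)$, which identifies its realization with $A_*$, we get $\on{HH}^{S^d}(A_*|R_*)\simeq A_*$, realized by the unit map after tracking the identifications. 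This closes the induction.

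The only genuine use of the \'etale hypothesis is the base case $d=1$, so I expect the main point to be establishing there that \'etaleness --- in the graded setting --- forces $A_*$ to be a projective $A_*\otimes_{R_*}A_*$-module, equivalently the vanishing of the higher relative $\on{Tor}$. The rest is formal, the one technical care being the flatness of $\on{HH}^{S^{d-1}}(A_*|R_*)$ over $R_*$ needed to replace it by $A_*$ inside the two-sided bar construction.
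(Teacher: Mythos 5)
Your proof is correct and follows essentially the same route as the paper: induction on $d$, with the base case $d=1$ reduced via Corollary \ref{Hochschild homology} to classical \'etale descent for Hochschild homology, and the inductive step carried out by writing $S^d$ as the pushout $\on{pt}\leftarrow S^{d-1}\to\on{pt}$ and comparing the resulting two-sided bar construction from \ref{HH pushout} with $\on{B}_\bullet(A_*,A_*,A_*)$. The only differences are cosmetic: you spell out the $d=1$ argument (projectivity of $A_*$ over $A_*\otimes_{R_*}A_*$) where the paper cites Weibel, and you pull the filtered-colimit reduction to the front, whereas the paper invokes it only in the base case and runs the induction directly on the colimit.
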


\begin{proof}
We proceed by induction on $d$. For $d=1$, $\on{HH}^{S^1}(A_*|R_*)$ is quasi-isomorphic to the ordinary Hochschild homology $\HH{}(A_*|R_*)$  by proposition \ref{Hochschild homology}. If $A_*$ is \'etale, the result is well-known (see for instance \cite[\'Etale descent theorem p. 368]{weibeletale}). If $A_*$ is a filtered colimit of \'etale algebras, the result follows from the fact that Hochschild homology commutes with filtered colimits.

Now assume that $A_*\to\HH{S^{d-1}}(A_*|R_*)$ is a quasi-isomorphism of commutative algebras. The sphere $S^d$ is part of the following homotopy pushout diagram
\[\xymatrix{
S^{d-1}\ar[r]\ar[d]&\on{pt}\ar[d]\\
\on{pt}\ar[r]& S^{d}
}
\]
Applying \ref{HH pushout}, we find
\[\HH{S^d}(A_*|R)\simeq |\on{B}_\bullet(A_*,\HH{S^{d-1}}(A_*|R_*),A_*)|\]
The quasi-isomorphism $A_*\to\HH{S^{d-1}}(A_*|R_*)$ induces a degreewise quasi-isomorphism between Reedy cofibrant simplicial objects
\[\on{B}_\bullet(A_*,A_*,A_*)\to\on{B}_\bullet(A_*,\HH{S^{d-1}}(A_*|R_*),A_*)\]
This induces a quasi-isomorphism between their realization
\[A_*\simeq \HH{S^d}(A_*|R_*)\]
\end{proof}

\begin{coro}\label{unit equivalence}
Let $A$ be an $\oper{E}_d$-algebra in $\cat{C}$ such that $K_*(A)$ is a filtered colimits of \'etale algebras over $K_*$, then the unit map:
\[A\to\int_{S^{d-1}\times\mathbb{R}}A\]
is a $K$-local equivalence.
\end{coro}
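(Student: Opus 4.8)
The plan is to read off $K_*\!\left(\int_{S^{d-1}\times\mathbb{R}}A\right)$ from the spectral sequence of Proposition~\ref{spectral sequence} and to recognise its edge homomorphism as $K_*$ of the unit map. Since $d\geq 2$, Proposition~\ref{spectral sequence} applies to $M=S^{d-1}\times\mathbb{R}$; and as $S^{d-1}\times\mathbb{R}$ deformation retracts onto $S^{d-1}$ we have $\on{Sing}(S^{d-1}\times\mathbb{R})\simeq\on{Sing}(S^{d-1})$, so, because $X\mapsto\HH{X}(K_*A|K_*E)$ sends weak equivalences of simplicial sets to quasi-isomorphisms, this gives a spectral sequence
\[\on{E}^2_{s,t}=\HH{S^{d-1}}_{s,t}(K_*A|K_*E)\implies K_{s+t}\!\left(\int_{S^{d-1}\times\mathbb{R}}A\right).\]

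Next I would invoke Proposition~\ref{etale descent} with $A_*=K_*A$ and $R_*=K_*E$; the hypothesis that $K_*A$ be a filtered colimit of étale algebras over the base is exactly the input that proposition needs (in the application to $E_n$ one has $E=\mathbb{S}$, hence $K_*E=K_*$, which is the stated hypothesis). Since $d-1\geq 1$, the sphere $S^{d-1}$ is covered by Proposition~\ref{etale descent}, so the unit map $K_*A\to\HH{S^{d-1}}(K_*A|K_*E)$ is a quasi-isomorphism of commutative $K_*E$-algebras; equivalently $\HH{S^{d-1}}_0(K_*A|K_*E)\cong K_*A$ and $\HH{S^{d-1}}_s(K_*A|K_*E)=0$ for $s>0$. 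Hence the $\on{E}^2$-page above is concentrated in the column $s=0$: all differentials vanish, there is no extension problem, and $K_*\!\left(\int_{S^{d-1}\times\mathbb{R}}A\right)\cong K_*A$.

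It remains to see that this isomorphism is the one induced by the unit map. For a framed embedding $\phi\colon\mathbb{R}^{d-1}\times\mathbb{R}\hookrightarrow S^{d-1}\times\mathbb{R}$ commuting with the projection to $\mathbb{R}$, the map $u_\phi\colon A\cong\int_{\mathbb{R}^{d-1}\times\mathbb{R}}A\to\int_{S^{d-1}\times\mathbb{R}}A$ is, independently of $\phi$ up to isotopy, the unit map of the statement; and by the edge-homomorphism proposition proved just before Lemma~\ref{splitting}, the corresponding edge homomorphism $K_*A\to\on{E}^\infty_{0,*}$ converges to $K_*(u_\phi)$ and equals, on the $\on{E}^2$-page, the algebraic unit map $K_*A=\HH{\on{pt}}(K_*A|K_*E)\to\HH{S^{d-1}}(K_*A|K_*E)$ induced by a base-point of $S^{d-1}$ --- which, the sphere being connected, is the unit map of Proposition~\ref{etale descent}. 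Since the spectral sequence collapses onto the $0$-th column, the filtration on $K_*\!\left(\int_{S^{d-1}\times\mathbb{R}}A\right)$ is trivial, so $\on{E}^\infty_{0,*}$ is all of $K_*\!\left(\int_{S^{d-1}\times\mathbb{R}}A\right)$ and the edge homomorphism \emph{is} $K_*(u_\phi)$; its $\on{E}^2$-avatar being an isomorphism by Proposition~\ref{etale descent}, so is $K_*(u_\phi)$. As a morphism of $E$-modules is a $K$-local equivalence precisely when it is a $K_*$-isomorphism, this proves the corollary.

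I expect the point requiring the most care to be the last step: using the spectral sequence of Proposition~\ref{spectral sequence} \emph{compatibly with the unit map}, rather than merely to compute the abstract $K$-homology of $\int_{S^{d-1}\times\mathbb{R}}A$. One has to know that the $\on{E}^2$-level edge homomorphism is the algebraic unit $\HH{\on{pt}}\to\HH{S^{d-1}}$, and that degeneration of the $\on{E}^2$-page turns the statement ``the edge homomorphism converges to $K_*(u_\phi)$'' into an identification of the edge homomorphism \emph{with} $K_*(u_\phi)$ (so that the computed isomorphism is on $K_*\!\left(\int_{S^{d-1}\times\mathbb{R}}A\right)$ itself, not merely on an associated-graded piece). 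Both facts are supplied by the edge-homomorphism proposition preceding Lemma~\ref{splitting}; granting them, the corollary is immediate from Propositions~\ref{spectral sequence} and~\ref{etale descent}.
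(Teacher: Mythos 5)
Your proposal is correct and follows essentially the same route as the paper: the paper's proof likewise computes $K_*$ of the unit map as the edge homomorphism of the spectral sequence of Proposition~\ref{spectral sequence}, invokes Proposition~\ref{etale descent} to see that the $\on{E}^2$-page is concentrated in the $0$-th column so that the spectral sequence degenerates for degree reasons, and concludes that the edge homomorphism, hence $K_*(u_\phi)$, is an isomorphism. You merely spell out in more detail the identification of the edge homomorphism with the unit map (via the edge-homomorphism proposition) and the triviality of the filtration, both of which the paper leaves implicit.
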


\begin{proof}
It suffices to check that the $K$-homology of this map is an isomorphism. This can be computed as the edge homomorphism of the spectral sequence $\on{E}^2(S^{d-1}\times\mathbb{R},A,K)$. By the previous proposition, the edge homomorphism is an isomorphism on the $\on{E}^2$-page. Therefore, the spectral sequence collapses at the $\on{E}^2$-page for degree reasons.
\end{proof}

Let us fix a prime $p$. We denote by $E_n$, the Lubin-Tate ring spectrum of height $n$ at $p$ and $K_n$ the $2$-periodic Morava $K$-theory of height $n$. Recall that
\begin{align*}
(E_n)_* &\cong\mathbb{W}(\mathbb{F}_{p^n})[[u_1,\ldots,u_{n-1}]][u^{\pm 1} ],\;|u_i|=0\;|u|=2\\
(K_n)_* &\cong \mathbb{F}_{p^n}[u^{\pm 1}]=(E_n)_*/(p,u_1,\ldots,u_{n-1})
\end{align*}

The spectrum $E_n$ is known to have a unique $\oper{E}_1$-structure inducing the correct multiplication on homotopy groups (this is a theorem of Hopkins and Miller, see \cite{rezknotes}) and a unique commutative structure (see \cite[Corollary 7.6.]{goerssmoduli}). As far as we know, there is no published proof that the space of $\oper{E}_d$-structure for $d\geq 2$ is contractible although evidence suggests that it is the case.

The spectrum $K_n$ has a $\mathbb{Z}/2$-equivariant K\"unneth isomorphism if $p$ is odd. If $p=2$, the equivariance is not satisfied in general but it is true if we restrict $(K_n)_*$ to spectra whose $K_n$-homology is concentrated in even degree like $E_n$. Our argument works at $p=2$ modulo this minor modification.

\begin{coro}\label{LT is etale}
For any positive integer $n$, and any $\oper{E}_d$-algebra structure on $E_n$ inducing the correct multiplication on homotopy groups, the unit map
\[E_n\to\int_{S^{d-1}\times\mathbb{R}}E_n\]
induces an isomorphism in $K_n$-homology.
\end{coro}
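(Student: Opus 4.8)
The plan is to reduce the statement, via Corollary~\ref{unit equivalence}, to a purely algebraic fact about the graded ring $(K_n)_*(E_n)$. Applying \ref{unit equivalence} with $A=E_n$ and $K=K_n$, it is enough to show that $(K_n)_*(E_n)$ is a filtered colimit of étale algebras over $(K_n)_*=\mathbb{F}_{p^n}[u^{\pm 1}]$. One should first note that this hypothesis makes sense: for $d\geq 2$ the $\oper{E}_d$-structure makes $(K_n)_*(E_n)$ a commutative $(K_n)_*$-algebra, while for $d=1$ the $\oper{E}_1$-structure is the unique one and underlies the commutative structure, so in all cases we are dealing with a commutative $(K_n)_*$-algebra. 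One should also record that this commutative algebra is independent of the chosen admissible $\oper{E}_d$-structure: it is $(K_n)_*$ applied to the multiplication $E_n\otimes_{\mathbb{S}}E_n\to E_n$, and the data we need is pinned down by the requirement that this multiplication induce the correct product on $\pi_*E_n$ together with the rigidity of multiplicative structures on $E_n$.

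The key input is then the classical computation of the $K_n$-homology of $E_n$: there is an isomorphism of graded commutative $(K_n)_*$-algebras
\[(K_n)_*(E_n)\;\cong\;\mathrm{Map}^c(\mathbb{G}_n,(K_n)_*),\]
where $\mathbb{G}_n=\mathbb{S}_n\rtimes\mathrm{Gal}(\mathbb{F}_{p^n}/\mathbb{F}_p)$ is the profinite extended Morava stabilizer group, $(K_n)_*$ carries the discrete topology, and $\mathrm{Map}^c$ denotes continuous maps with pointwise operations; this is due to Strickland, building on work of Morava, Devinatz and Hopkins, and is also treated in Hovey's work on operations in Morava $E$-theory. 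I would also indicate the quick reason: $K_n$ is the quotient of the $E_n$-module $E_n$ by the regular sequence $p,u_1,\dots,u_{n-1}$, so $K_n\wedge E_n$ is the corresponding quotient of $E_n\wedge E_n$, and this reduces the completed homology $(E_n)^{\vee}_*E_n\cong\mathrm{Map}^c(\mathbb{G}_n,(E_n)_*)$ modulo its maximal ideal.

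For the remaining step, one writes $\mathbb{G}_n=\lim_U\mathbb{G}_n/U$ as a cofiltered limit of finite groups over the poset of open normal subgroups $U$ ordered by reverse inclusion, which is directed since $U_1\cap U_2$ refines both $U_1$ and $U_2$. Since $(K_n)_*$ is discrete, every continuous map $\mathbb{G}_n\to(K_n)_*$ factors through some finite quotient $\mathbb{G}_n/U$, so
\[\mathrm{Map}^c(\mathbb{G}_n,(K_n)_*)\;\cong\;\mathrm{colim}_U\ \mathrm{Map}(\mathbb{G}_n/U,(K_n)_*)\;=\;\mathrm{colim}_U\ \prod_{\mathbb{G}_n/U}(K_n)_*\]
is a filtered colimit of $(K_n)_*$-algebras, each of which is a finite product of copies of the base ring $(K_n)_*$ and hence a (graded) étale $(K_n)_*$-algebra, being flat, finitely presented, and of vanishing module of Kähler differentials. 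Thus $(K_n)_*(E_n)$ is a filtered colimit of étale $(K_n)_*$-algebras and Corollary~\ref{unit equivalence} gives the conclusion. I expect the only delicate point to be the one flagged in the first paragraph --- checking that an arbitrary admissible $\oper{E}_d$-structure really induces the standard ring structure on $(K_n)_*(E_n)$, the cited computation being usually stated for the canonical commutative structure on $E_n$; everything else is the formal passage through \ref{unit equivalence} and the elementary observation that a profinite group produces an ind-étale ring of locally constant functions.
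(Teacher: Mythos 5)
Your proposal is correct and follows essentially the same route as the paper: reduce via Corollary \ref{unit equivalence} to showing that $(K_n)_*(E_n)$ is a filtered colimit of \'etale $(K_n)_*$-algebras, invoke the computation $(K_n)_*(E_n)\cong C(\Gamma,(K_n)_*)$ (the paper cites Hovey, which covers any admissible $\oper{E}_d$-structure, addressing the point you flag), and then express continuous functions on a profinite group as a filtered colimit of finite products of the base ring. The extra details you supply (normal versus finite-index open subgroups, why finite products are \'etale) are harmless elaborations of the same argument.
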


\begin{proof}
By \cite[Corollary 4.10]{hoveyoperations}, for any such $\oper{E}_d$-structure on $E$, we have 
\[(K_n)_*(E_n)\cong C(\Gamma,(K_n)_*)\]
where the right hand side denotes the set of continuous maps $\Gamma\to (K_n)_*$ where $\Gamma$ is the Morava stabilizer group with its profinite topology and $(K_n)_*$ is given the discrete topology. By definition of a profinite group, the group $\Gamma$ is an inverse limit $\Gamma=\on{lim}_U\Gamma/U$ taken over the filtered poset of open finite index subgroups $U$ of $\Gamma$. Thus, we have
\[C(\Gamma,(K_n)_*)=\on{colim}_UC(\Gamma/U,(K_n)_*)\]
This expresses $(K_n)_*E_n$ as a filtered colimit of \'etale algebras over $(K_n)_*$. Thus by \ref{unit equivalence}, we get the desired result.
\end{proof}

\begin{prop}\label{E_n hochschild cohomology}
Same notations, the map $\HC{\oper{E}_d}(E_n)\to E_n$ is an equivalence.
\end{prop}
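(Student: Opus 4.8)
The plan is to unfold the geometric model of higher Hochschild cohomology until the computation becomes a statement about left modules over the $\oper{E}_1$-algebra $B:=\int_{S^{d-1}\times\mathbb{R}}E_n$, and then to play the fact (\ref{LT is etale}) that the unit $E_n\to B$ is a $K_n$-homology equivalence off against the $K_n$-locality of $E_n$.

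First I would chain together \ref{defi-Higher Hochschild cohomology}, the equivalence $\HC{\oper{E}_d}(A,M)\simeq\R\Hom_A^{S^{d-1}\times[0,1]}(A,M)$ recorded just after \ref{equivalence of the two definitions} (which rests on \ref{right hom}), and \ref{equivalence}, to get, after suitable (co)fibrant replacements,
\[\HC{\oper{E}_d}(E_n)=\HC{\oper{E}_d}(E_n,E_n)\simeq\R\Hom^{[0,1]}_B(E_n,E_n),\qquad B=\int_{S^{d-1}\times(0,1)}E_n\simeq\int_{S^{d-1}\times\mathbb{R}}E_n .\]
Here $B$ carries the $\oper{E}_1$-structure coming from stacking in the last coordinate, and $E_n$ is the left $B$-module obtained from $E_n$ regarded as the $S^{d-1}_\kappa$-shaped module $\L P_{\bar{D}}(\un)\simeq\int_{\bar{D}}E_n$ over itself, with $B$ acting through embeddings of little disks into a collar of $\partial\bar{D}$ as in the subsection ``Comparison with the actual homomorphisms''. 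By \ref{E_1 vs Ass} applied to $B$ and the left module $E_n$ we have $\R\Hom^{[0,1]}_B(B^m,E_n)\simeq E_n$, so the class $1\in\pi_0 E_n$ classifies a map of left $B$-modules $j\colon B^m\to E_n$, concretely ``act on the unit $1_{E_n}$''. Tracing through the identifications, the natural map $\HC{\oper{E}_d}(E_n)\to E_n$ of the statement (evaluation at the unit) is precomposition with $j$, i.e.\ the composite
\[\HC{\oper{E}_d}(E_n)\simeq\R\Hom^{[0,1]}_B(E_n,E_n)\goto{j^*}\R\Hom^{[0,1]}_B(B^m,E_n)\simeq E_n .\]

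It thus suffices to show $j^*$ is a weak equivalence. I would first check that $j$ is a $K_n$-homology equivalence of left $B$-modules: restricting the $B$-action on $\int_{\bar{D}}E_n$ along the unit map $\eta\colon E_n\to B$ of \ref{LT is etale} exhibits $\int_{\bar{D}}E_n$ as $E_n$ acting on itself by multiplication (a little disk inside a collar is just a disk inside the contractible manifold $\bar{D}$), so $j\circ\eta\simeq\id_{E_n}$; since $\eta$ is a $K_n$-homology equivalence, two-out-of-three makes $j$ one too. Then, because $E_n$ is $K_n$-local, the functor $\R\Hom^{[0,1]}_B(-,E_n)$ carries $K_n$-homology equivalences of cofibrant left $B$-modules to weak equivalences: by \ref{hom as a holim} this functor is the homotopy limit over $\cat{D}([0,1])\op$ of $U\mapsto\mathscr{F}(-,B,E_n)(\delta U)$, whose value at a $U$ with $k$ inner intervals is the function spectrum $\R\Hom(B^{\otimes k}\otimes(-),E_n)$; smashing with $B^{\otimes k}$ preserves $K_n$-acyclic spectra, and $\R\Hom(-,E_n)$ kills $K_n$-acyclic spectra since $E_n$ is $K_n$-local, so these homotopy limits agree levelwise and hence in the limit. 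Applying this to $j$ gives that $j^*$, and therefore $\HC{\oper{E}_d}(E_n)\to E_n$, is a weak equivalence.

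The step I expect to be the main obstacle is the bookkeeping in the first paragraph: making precise that $E_n$, as a left $B$-module, is $\int_{\bar{D}}E_n$ with the collar action, that the natural map to $E_n$ in the statement is evaluation at the unit and hence equals $j^*$, and that restricting along $\eta$ recovers the self-multiplication so that $j\circ\eta\simeq\id$. These claims are geometrically transparent but require carefully composing the identifications of \ref{right hom}, \ref{equivalence}, \ref{equivalence of S shaped module} and \ref{E_1 vs Ass}. By contrast, the locality input is purely formal once the cobar/homotopy-limit description of $\R\Hom^{[0,1]}_B$ is in hand.
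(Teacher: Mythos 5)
Your proof is correct and rests on the same inputs as the paper's --- the end/cobar description of $\R\Hom^{[0,1]}_B$ coming from \ref{equivalence} and \ref{hom as a holim}, the fact (\ref{LT is etale}) that the unit $\eta\colon E_n\to B=\int_{S^{d-1}\times\mathbb{R}}E_n$ is a $K_n$-homology equivalence, and the $K_n$-locality of $E_n$ --- but you pivot on a different variable. The paper keeps the module $E_n$ fixed in both slots and replaces the \emph{algebra}: since $\Hom(-,E_n)$ inverts $K_n$-equivalences, the functor $\mathscr{F}(E_n,B,E_n)$ is levelwise equivalent to $\mathscr{F}(E_n,E_n,E_n)$, so the end collapses to $\R\Hom_{E_n}(E_n,E_n)\simeq E_n$ in one stroke. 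You instead keep the algebra $B$ and replace the \emph{source module}, exhibiting the comparison map as precomposition with a $K_n$-equivalence $j\colon B^m\to E_n$ of left $B$-modules and then invoking \ref{E_1 vs Ass} for the free module. These are dual applications of the same locality lemma; your route is longer because of the bookkeeping needed to construct $j$ and to identify the statement's map with $j^*$, but it has the genuine advantage of pinning down which map $\HC{\oper{E}_d}(E_n)\to E_n$ is being inverted, something the paper's chain of equivalences leaves implicit. The one point to watch, which you flag yourself, is that $j\circ\eta\simeq\id$ need only be verified on underlying objects for the two-out-of-three step on $K_n$-homology, and this is exactly the splitting already recorded in \ref{splitting}.
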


\begin{proof}
We have
\[\HC{\oper{E}_d}(E_n)\simeq\R\Hom_{\int_{S^{d-1}\times\mathbb{R}}E_n}(E_n,E_n)\]
This can be computed as the end
\[\hom_{\Disk^{-+}}(\Emb^{S^0}(-,[0,1]),\mathscr{F}(E_n,\int_{S^{d-1}\times\mathbb{R}}E_n,E_n))\]

The spectrum $E_n$ is $K(n)$-local, therefore, $\Hom(-,E_n)$ sends $K(n)$-equivalences to equivalences. This implies that
\[\mathscr{F}(E_n,\int_{S^{d-1}\times\mathbb{R}}E_n,E_n)\simeq \mathscr{F}(E_n,E_n,E_n)\]

Therefore, we have
\[\HC{\oper{E}_d}(E_n)\simeq\R\Hom_{E_n}(E_n,E_n)\]
\end{proof}

We can prove a variant of the previous result. Let $E(n)=BP/(v_{n+1},v_{n+2},\ldots)[v_n^{-1}]$ be the Johnson-Wilson spectrum, let $K(n)$ be the $v_n$ periodic Morava $K$-theory with $K(n)_*=E(n)/(p,v_1,\ldots,v_{n-1})=\mathbb{F}_p[v_n^{\pm 1}]$. Let $\widehat{E}(n)$ be $L_{K(n)}E(n)$.

\begin{prop}
For any $\oper{E}_d$-algebra structure on $\widehat{E}(n)$ inducing the correct multiplication on homotopy groups, the action map
\[\HC{\oper{E}_d}(\widehat{E}(n))\to\widehat{E}(n)\]
is a weak equivalence.
\end{prop}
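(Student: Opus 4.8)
The plan is to run the proof of Proposition \ref{E_n hochschild cohomology} essentially verbatim, with $\widehat{E}(n)$ in place of $E_n$ and the $v_n$-periodic Morava $K$-theory $K(n)$ in place of $K_n$. That proof uses only two facts about its input spectrum, and both have evident analogues here. The first is that $\widehat{E}(n)$ is $K(n)$-local, which holds by construction since $\widehat{E}(n)=L_{K(n)}E(n)$. The second is the analogue of Corollary \ref{LT is etale}: for any $\oper{E}_d$-structure on $\widehat{E}(n)$ inducing the correct multiplication on homotopy groups, the unit map $\widehat{E}(n)\to\int_{S^{d-1}\times\mathbb{R}}\widehat{E}(n)$ is a $K(n)$-homology isomorphism.

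To establish the second fact, by Corollary \ref{unit equivalence} it suffices to show that $K(n)_*\widehat{E}(n)$ is a filtered colimit of \'etale algebras over $K(n)_*$. This goes exactly as in Corollary \ref{LT is etale}: the $K(n)$-homology of $\widehat{E}(n)$ depends only on the ring structure induced on homotopy groups, which is fixed by hypothesis, and the relevant co-operation computation (the Johnson--Wilson analogue of \cite[Corollary 4.10]{hoveyoperations}) identifies it with the ring $C(\Gamma,K(n)_*)$ of continuous maps from a profinite group $\Gamma$ (a form of the Morava stabilizer group) to the discrete ring $K(n)_*$. Writing $\Gamma=\on{lim}_U\Gamma/U$ over the filtered poset of open finite-index subgroups $U$ gives $C(\Gamma,K(n)_*)=\on{colim}_U C(\Gamma/U,K(n)_*)$, a filtered colimit of finite products of copies of $K(n)_*$, hence of \'etale $K(n)_*$-algebras. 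As in the $E_n$ case, at $p=2$ one restricts $K(n)_*(-)$ to the subcategory of spectra with $K(n)$-homology concentrated in even degrees; this subcategory contains $\widehat{E}(n)$ and all the factorization homologies that occur below, so the $\mathbb{Z}/2$-equivariant K\"unneth isomorphism needed in Section 5 still applies.

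Granting these two facts, the conclusion is formal. By Theorem \ref{right hom} together with Corollary \ref{equivalence}, $\HC{\oper{E}_d}(\widehat{E}(n))$ is the homotopy end of $\mathscr{F}(\widehat{E}(n),\int_{S^{d-1}\times\mathbb{R}}\widehat{E}(n),\widehat{E}(n))$ over $(\Disk^{-+})\op$. Since $\widehat{E}(n)$ is $K(n)$-local, $\Hom(-,\widehat{E}(n))$ carries $K(n)$-equivalences to equivalences; and since $K(n)_*$ is a graded field and $K(n)_*\widehat{E}(n)\to K(n)_*\int_{S^{d-1}\times\mathbb{R}}\widehat{E}(n)$ is an isomorphism, the maps $\widehat{E}(n)\otimes\widehat{E}(n)^{\otimes k}\to\widehat{E}(n)\otimes(\int_{S^{d-1}\times\mathbb{R}}\widehat{E}(n))^{\otimes k}$ are $K(n)$-equivalences for all $k$. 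Hence $\mathscr{F}(\widehat{E}(n),\int_{S^{d-1}\times\mathbb{R}}\widehat{E}(n),\widehat{E}(n))\simeq\mathscr{F}(\widehat{E}(n),\widehat{E}(n),\widehat{E}(n))$ as functors on $(\Disk^{-+})\op$, so their homotopy ends agree, and by Corollary \ref{E_1 vs Ass} the latter is $\R\Hom_{\widehat{E}(n)}(\widehat{E}(n),\widehat{E}(n))\simeq\widehat{E}(n)$; one checks this identification is induced by the action map, which proves the proposition.

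The only genuine obstacle is the \'etaleness input: one must know that the identification of $K(n)_*\widehat{E}(n)$ with a continuous function ring on a profinite group is valid for an \emph{arbitrary} $\oper{E}_d$-structure on $\widehat{E}(n)$, not merely the standard multiplicative one. As in Corollary \ref{LT is etale}, this reduces to the observation that the computation uses only the induced multiplication on $\pi_*$, which is fixed by hypothesis; everything else is a transcription of the $E_n$ argument.
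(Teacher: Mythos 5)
Your proposal is correct and is exactly the paper's argument: the paper's proof of this proposition simply says ``the proof is exactly the same once we know that $K(n)_*\widehat{E}(n)=C(\Gamma,K(n)_*)$,'' which is precisely the \'etaleness input you identify and justify. Your write-up merely spells out the formal $K(n)$-locality steps that the paper leaves implicit by reference to Proposition \ref{E_n hochschild cohomology}.
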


\begin{proof}
The proof is exactly the same once we know that $K(n)_*\widehat{E}(n)$ is the commutative ring
\[K(n)_*\widehat{E}(n)=C(\Gamma,K(n)_*)\]
where $\Gamma$ is again the Morava stabilizer group. 
\end{proof}

\section{\'Etale base change for Hochschild cohomology}

In this section we put the previous result in the wider context of derived algebraic geometry over $\oper{E}_d$-algebra. This section is inspired by \cite{francistangent}. 

We let $(\cat{C},\otimes,\un)$ denote the category $\Mod_E$ but the arguments hold more generally. Note however that we need $\cat{C}$ to be stable in this section.

There is a ``polar coordinate'' embedding $S^{d-1}\times(0,1)\to D$ sending $(\theta,r)$ to $e^{r-1}\theta$.

\begin{defi}
Let $A$ be an $\oper{E}_d$-algebra in $\cat{C}$. The cotangent complex $L_A$ of $A$ is defined to be the $n$-fold desuspension of the cofiber of the map
\[\int_{S^{d-1}\times\mathbb{R}}A\to \int_{\mathbb{R^d}}A \cong A\]
induced by the polar coordinate embedding.
\end{defi}

\begin{prop}
This coincides with the cotangent complex of $A$ defined by Francis.
\end{prop}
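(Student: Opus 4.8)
The plan is to unwind both definitions of the cotangent complex and produce a natural equivalence between the two constructions. Francis, in \cite[Definition 2.7.]{francistangent}, defines the $\oper{E}_d$-cotangent complex via the stabilization of the category of operadic $\oper{E}_d$-modules, or equivalently via the fiber sequence relating $A$ to the ``Koszul dual'' construction; concretely, one works with the enveloping algebra $U_A^{\oper{E}_d[1]}$ of $\Mod_A^{\oper{E}_d}$ and the augmentation $U_A^{\oper{E}_d[1]}\to A$. The first step is therefore to recall that, by \ref{equivalence of modules}, there is a weak equivalence of associative algebras $U_A^{S^{d-1}_\kappa}\to U_A^{\oper{E}_d[1]}$, and by \ref{coro-equivalence enveloping factorization} a weak equivalence $U_A^{S^{d-1}_\kappa}\simeq\int_{S^{d-1}\times(0,1)}A$. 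So the object $\int_{S^{d-1}\times\mathbb{R}}A$ appearing in our definition is identified, as an $\oper{E}_1$-algebra augmented over $A$, with the enveloping algebra that governs Francis's definition.

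Next I would identify the map $\int_{S^{d-1}\times\mathbb{R}}A\to A$ in our definition with the augmentation of this enveloping algebra. The polar coordinate embedding $S^{d-1}\times(0,1)\to D$ induces, after applying $\int_{-}A$, exactly the multiplication/action map that on the $S_\tau$-shaped module side corresponds to the canonical map $U_A^{S^{d-1}_\kappa}\to A$ exhibiting $A$ itself as an $S^{d-1}_\kappa$-shaped module (the ``diagonal'' module, i.e.\ the image of $\un$ under $\L P_{\bar D}$). One checks this by comparing the two embeddings $S^{d-1}\times(0,1)\hookrightarrow D$ and $D^{\sqcup n}\sqcup S^{d-1}\times[0,1)\hookrightarrow S^{d-1}\times[0,1)$ at the level of configuration spaces, using \ref{framed embeddings of disks}; both are governed by the contractible space of framed embeddings of a disk into a disk, so the induced maps agree up to coherent homotopy. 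Hence the cofiber of our map is the augmentation ideal $I$ of $U_A^{\oper{E}_d[1]}\to A$, suitably derived (here we use that $\cat{C}$ is stable, so cofiber sequences behave well).

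Then I would match the desuspension. Francis's cotangent complex is obtained from this augmentation ideal by an $n$-fold (i.e.\ $d$-fold) desuspension, reflecting the fact that the $\oper{E}_d$-operadic Koszul dual shifts degree by $d$; our definition desuspends by ``$n$'' (which in the notation of that section means $d$) the cofiber of $\int_{S^{d-1}\times\mathbb{R}}A\to A$. Since we have already identified the cofibers, the two $d$-fold desuspensions agree. Assembling: $L_A^{\text{ours}} = \Sigma^{-d}\,\mathrm{cofib}\!\left(\int_{S^{d-1}\times\mathbb{R}}A\to A\right)\simeq \Sigma^{-d}\,\mathrm{cofib}\!\left(U_A^{\oper{E}_d[1]}\to A\right)\simeq L_A^{\text{Francis}}$, with all equivalences natural in $A$.

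The main obstacle I anticipate is not any single hard computation but rather making the comparison of the two \emph{maps} (not just their sources and targets) precise and coherent: one must check that the augmentation $U_A^{\oper{E}_d[1]}\to A$ transported across the zig-zag of \ref{equivalence of modules} and \ref{coro-equivalence enveloping factorization} really is the map induced by the polar coordinate embedding, and that the degree shift conventions on the two sides coincide. This is essentially a matter of tracing through the construction of $\L P_{\bar D}$ from \ref{subsection generalization} and Francis's Koszul duality functor and observing they are the same up to the expected shift; once that bookkeeping is done, stability of $\cat{C}$ makes the identification of cofibers formal.
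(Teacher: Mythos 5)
Your proposal is essentially correct in outline, but it takes a genuinely different route from the paper. The paper's proof observes that both constructions commute with homotopy colimits of $\oper{E}_d$-algebras, reduces to the case of a free algebra $A=F_{\oper{E}_d}(V)$, writes out both $\int_{S^{d-1}\times(0,1)}F_{\oper{E}_d}(V)$ and $\int_{D}F_{\oper{E}_d}(V)$ explicitly as sums of configuration spaces, and then matches the polar embedding map against the first map of Francis's cofiber sequence $\int_{S^{d-1}\times(0,1)}A\to A\to L_A[n]$ by inspecting the free-case computation underlying Francis's own proof of that sequence. Your route instead stays at the level of a general $A$ and identifies $\int_{S^{d-1}\times(0,1)}A$ with the enveloping algebra via \ref{coro-equivalence enveloping factorization} and \ref{equivalence of modules}, then matches the polar embedding map with the augmentation. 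What the paper's reduction to free algebras buys is that the comparison of the two \emph{maps} becomes a concrete comparison of explicit formulas, whereas your approach has to track the augmentation through a zig-zag of equivalences for arbitrary $A$; what your approach buys is that it makes the conceptual content (enveloping algebra, augmentation ideal, degree shift) visible without computation.

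Two caveats. First, you present the cofiber-sequence description as if it were Francis's definition; in fact Francis defines $L_A$ as the object corepresenting derivations, and the identification with the shifted cofiber of the augmentation is precisely his Theorem 2.26. Both your argument and the paper's take that theorem as input, so this is not circular, but you should state it as the theorem you are invoking rather than as the definition. Second, your justification for identifying the polar embedding map with the augmentation --- that ``both are governed by the contractible space of framed embeddings of a disk into a disk'' --- does not apply as stated: the relevant embeddings are of $S^{d-1}\times(0,1)$ into $D$ and of $S^{d-1}\times[0,1)\sqcup D^{\sqcup n}$ into $S^{d-1}\times[0,1)$, not of disks into disks, so \ref{framed embeddings of disks} does not directly control them. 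The identification is true (the collar embedding defining the module structure on $A=\L P_{\bar D}(\un)$ is isotopic to the polar embedding), but it needs to be checked by unwinding the map $S_\tau\to\Emb_f(-,S\times(0,1))$ and the construction of $\L P_{\bar D}$, which is exactly the bookkeeping you defer in your final paragraph; the paper's free-algebra reduction is how it avoids doing that bookkeeping in general.
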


\begin{proof}
Both sides of the map commutes with homotopy colimits of $\oper{E}_d$-algebras, therefore it suffices to check it for free $\oper{E}_d$-algebras. Let $A=F_{\oper{E}_d}(V)$. We can use \cite[Proposition 5.8]{francisfactorization}, we see that
\[\int_{S^{d-1}\times(0,1)}F_{\oper{E}_d}(V)\simeq \bigsqcup_{i\geq 0}\on{Conf}(i,S^{d-1}\times(0,1))\otimes_{\Sigma_i}V^{\otimes i}\]
and similarly
\[\int_{D}F_{\oper{E}_d}(V)\simeq \bigsqcup_{i\geq 0}\on{Conf}(i,D)\otimes_{\Sigma_i}V^{\otimes i}\]
On the other hand, it is proved in \cite[Theorem 2.26]{francistangent}  that there is a cofiber sequence
\[\int_{S^{d-1}\times(0,1)}A\to A\to L_A[n]\]
moreover, the proof of \cite[Theorem 2.26]{francistangent} is based on an explicit computation in the free case and it is easy to see by looking at this proof that the first map in the above cofiber sequence coincides with the ``polar embedding'' map.
\end{proof}

\begin{rem}
The above definition is a bit ad hoc. Francis actually defines in \cite[Definition 2.10]{francistangent} the cotangent complex as the object representing the derivations:
\[\R\Hom_{S_{\kappa}^{d-1}\Mod_A}(L_A,M)\simeq \R\Hom_{\cat{C}[\oper{E}_d]/A}(A,A\oplus M):=\on{Der}(A,M)\]
The fact that the two definitions coincide is then \cite[Theorem 2.26]{francistangent}.
\end{rem}

\begin{defi}
We say that an $\oper{E}_d$-algebra $A$ is \'etale if $L_A$ is contractible. More generally, given an object $Z$ in $\cat{C}$, we say that $A$ is $Z$-locally \'etale if $Z\otimes L_A$ is contractible.
\end{defi}

We say that a a map $X\to Y$ in $\cat{C}$ is a $Z$-local weak equivalence if the induced map $X\otimes^\L Z\to Y\otimes^\L Z$ is a weak equivalence.

An equivalent formulation of the previous definition is that $A$ is ($Z$-locally) \'etale if the unit map $A\to \int_{S^{d-1}\times(0,1)}A$ is a ($Z$-local) equivalence. Indeed we have shown in \ref{splitting} that the unit map is a section of $\int_{S^{d-1}\times(0,1)}A\to A$.

\begin{prop}
If $A$ is a commutative algebra and is ($Z$-locally) \'etale as an $\oper{E}_d$-algebra, then it is ($Z$-locally) \'etale as an $\oper{E}_{d+1}$-algebra.
\end{prop}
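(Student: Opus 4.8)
The plan is to reduce the statement to the inductive step already carried out in \ref{etale descent}. By the reformulation of étaleness recalled just above the statement, $A$ is ($Z$-locally) étale as an $\oper{E}_{d+1}$-algebra precisely when the unit map $A\to\int_{S^{d}\times\mathbb{R}}A$ — factorization homology computed with the $\oper{E}_{d+1}$-structure — is a ($Z$-local) equivalence, while the hypothesis says that the analogous map $A\to\int_{S^{d-1}\times\mathbb{R}}A$ for the $\oper{E}_d$-structure is one. So it suffices to deduce the former from the latter.

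First I would identify the relevant factorization homologies with a spectral version of Pirashvili's construction. Since $A$ is commutative, the underlying $\oper{E}_d$-algebra and the underlying $\oper{E}_{d+1}$-algebra are both restrictions of one and the same symmetric monoidal functor $\cat{Fin}\to\cat{C}$, so — exactly as in the proof of \ref{spectral sequence}, combining \ref{left} with \cite[Proposition 5.3.]{horeloperads} to push the defining coend along $\cat{E}_d\to\cat{Fin}$ — there is a natural equivalence $\int_M A\simeq\HH{\on{Sing}(M)}(A)$, where $\HH{X}(A)$ now denotes the evident spectral analogue $\Map(-,X)\otimes^{\L}_{\cat{Fin}}A$; under these equivalences the unit maps correspond to the unit maps $A=\HH{\on{pt}}(A)\to\HH{\on{Sing}(M)}(A)$. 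In particular we are reduced to showing that if $A\to\HH{S^{d-1}}(A)$ is a $Z$-local equivalence then so is $A\to\HH{S^{d}}(A)$.

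Then I would run the induction essentially verbatim. The spectral analogue of \ref{HH pushout} holds by the same argument, so writing $S^{d}$ as the homotopy pushout of $\on{pt}\leftarrow S^{d-1}\to\on{pt}$ gives $\HH{S^{d}}(A)\simeq|\on{B}_\bullet(A,\HH{S^{d-1}}(A),A)|$. Smashing over $E$ with $Z$ commutes with the bar construction and its realization, so $Z\otimes\HH{S^{d}}(A)\simeq|\on{B}_\bullet(Z\otimes A,Z\otimes\HH{S^{d-1}}(A),Z\otimes A)|$; since $Z\otimes A\to Z\otimes\HH{S^{d-1}}(A)$ is an equivalence by hypothesis, the induced map from $\on{B}_\bullet(Z\otimes A,Z\otimes A,Z\otimes A)$ is a degreewise equivalence of (suitably cofibrant) simplicial objects, hence an equivalence after realization, and $|\on{B}_\bullet(Z\otimes A,Z\otimes A,Z\otimes A)|\simeq Z\otimes A$. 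Tracking the unit map through these identifications shows that $A\to\int_{S^{d}\times\mathbb{R}}A$ is a $Z$-local equivalence, as desired.

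The main obstacle I expect is the bookkeeping in the second step: verifying that for a commutative $A$ the $\oper{E}_d$- and $\oper{E}_{d+1}$-factorization homologies really are computed by the same Hochschild-type functor, compatibly with the unit maps and with the $Z$-local comparison, so that the induction of \ref{etale descent} applies without change; one also needs the spectral version of \ref{HH pushout}, though that is routine. An alternative that avoids passing to $\HH{\bullet}$ would be to decompose the $(d+1)$-manifold $S^{d}\times\mathbb{R}$ as two copies of $\mathbb{R}^{d+1}$ glued along $S^{d-1}\times\mathbb{R}^2$, apply $\otimes$-excision for factorization homology of the $\oper{E}_{d+1}$-algebra $A$ together with Dunn additivity $\int_{S^{d-1}\times\mathbb{R}^2}A\simeq\int_{S^{d-1}\times\mathbb{R}}A$ (with $A$ restricted to $\oper{E}_d$), and again recognize the resulting bar construction; there the delicate point becomes matching the structure map $\int_{S^{d-1}\times\mathbb{R}}A\to A$ with the retraction of the étale-defining unit map supplied by \ref{splitting}.
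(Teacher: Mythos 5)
Your proof is correct and follows essentially the same route as the paper: the paper identifies $\int_MA$ for commutative $A$ with the tensor $\on{Sing}(M)\otimes A$ in commutative algebras (citing \cite[Theorem 5.8.]{horeloperads}, which is exactly your spectral Pirashvili coend $\Map(-,\on{Sing}(M))\otimes^{\L}_{\cat{Fin}}A$) and then says the argument is verbatim that of \ref{etale descent}, i.e.\ the pushout/bar-construction step you carry out. Your closing alternative via excision and Dunn additivity is not the paper's proof of this proposition but is precisely the strategy sketched in the remark immediately following it.
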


\begin{proof}
We have proved in \cite[Theorem 5.8.]{horeloperads} that for a commutative algebra $A$, $\int_M A$ is equivalent to $\on{Sing}(M)\otimes A$ (i.e. the tensor in the category of commutative algebras in $\Mod_E$). Then the proof is exactly the proof of \ref{etale descent}.
\end{proof}

\begin{rem}
More generally using the excision property for factorization homology (see \cite[Lemma 3.11.]{francisfactorization}), one can prove that if $A$ is $\oper{E}_{d+1}$ and is  ($Z$-locally) \'etale as an $\oper{E}_d$-algebra, it is ($Z$-locally) \'etale as an $\oper{E}_{d+1}$-algebra.
\end{rem}

\begin{rem}
If $A$ is a commutative algebra, then $A$ is \'etale as an $\oper{E}_2$-algebra if and only if it is formally $\on{THH}$-\'etale (i.e. if the map $A\to\on{THH}(A)$ is an equivalence).  Indeed, for commutative algebras (and in fact for an $\oper{E}_3$-algebras), $\on{THH}(A)$ coincides with $\int_{S^1\times\mathbb{R}}A$. Note that is is \emph{not} true for $\oper{E}_2$-algebras as the product framing on $S^1\times\mathbb{R}$ is not connected to the $\kappa$-framing in the space of framings of $S^1\times\mathbb{R}$.
\end{rem}

\begin{rem}
If $A$ is a commutative algebra, $\int_{S^{d-1}\times(0,1)}A\simeq S^{d-1}\otimes A$ by \cite[Theorem 5.8.]{horelfactorization}. Therefore, $A$ is \'etale as an $\oper{E}_{d+1}$-algebra if and only if the space $\Map_{\Mod_E[\oCom]}(A,B)$ is $d$-truncated for any $B$.
\end{rem}

Recall that an object $U$ of $\cat{C}$ is said to be $Z$-local if for all $Z$-local weak equivalence $X\to Y$, the induced map
\[\R\Hom(Y,U)\to\R\Hom(X,U)\]
is a weak equivalence in $\cat{C}$.

\begin{lemm}
Let $u:R\to S$ be a map of cofibrant associative algebras in $\cat{C}$ that is a $Z$-local weak equivalence and $M$ and $N$ be two left modules over $S$ with $N$ $Z$-local in $\cat{C}$. Then the map 
\[\R\Hom_{\Mod_S}(M,N)\to\R\Hom_{\Mod_R}(u^*M,u^*N)\]
is a weak equivalence.
\end{lemm}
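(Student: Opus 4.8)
The plan is to reduce the statement to a property of the counit of the extension-of-scalars adjunction. Write $u_!=S\otimes^\L_R(-)\colon\Mod_R\to\Mod_S$ for the derived extension of scalars, which is left adjoint to $u^*$. The derived adjunction gives a natural weak equivalence
\[\R\Hom_{\Mod_R}(u^*M,u^*N)\simeq\R\Hom_{\Mod_S}(\L u_!u^*M,N),\]
and, by naturality of the counit $\varepsilon\colon\L u_!u^*M=S\otimes^\L_RM\to M$, a short diagram chase identifies the map of the lemma with precomposition by $\varepsilon$:
\[\varepsilon^*\colon\R\Hom_{\Mod_S}(M,N)\to\R\Hom_{\Mod_S}(S\otimes^\L_RM,N).\]
So it suffices to prove that $\varepsilon^*$ is a weak equivalence.

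First I would show that $\varepsilon$ is a $Z$-local weak equivalence of underlying objects of $\cat{C}$. Model $S\otimes^\L_RM$ by the two-sided bar construction $\on{B}_\bullet(S,R,M)$; the map $u\colon R\to S$ induces a map of simplicial objects $\on{B}_\bullet(R,R,M)\to\on{B}_\bullet(S,R,M)$ which in level $n$ is $u$ smashed with $R^{\otimes n}\otimes M$, hence a $Z$-local equivalence. (Smashing with an arbitrary object preserves $Z$-local equivalences: $\cat{C}$ is stable, and $\mathrm{cofib}(u)\otimes Z\simeq 0$ forces $\mathrm{cofib}(u)\otimes W\otimes Z\simeq 0$ for every $W$.) Realizing, we obtain a $Z$-local equivalence $j\colon M\to S\otimes^\L_RM$, where we use the canonical identification $M\simeq|\on{B}_\bullet(R,R,M)|$. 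One checks that the composite $\on{B}_\bullet(R,R,M)\to\on{B}_\bullet(S,R,M)\to M$ is the usual augmentation, so that $\varepsilon\circ j=\id_M$. Since $j$ is a $Z$-local equivalence admitting $\varepsilon$ as a retraction, smashing with $Z$ shows that $\varepsilon$ is itself a $Z$-local equivalence.

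It then remains to check that $\R\Hom_{\Mod_S}(-,N)$ carries any morphism of $S$-modules that is a $Z$-local equivalence of underlying $\cat{C}$-objects to a weak equivalence; applying this to $\varepsilon$ concludes. Here I would use the cobar presentation $\R\Hom_{\Mod_S}(X,N)\simeq\on{Tot}\big(n\mapsto\R\Hom_{\cat{C}}(S^{\otimes n}\otimes X,N)\big)$, which comes from the bar resolution $|\on{B}_\bullet(S,S,X)|\xrightarrow{\simeq}X$. Smashing with $S^{\otimes n}$ preserves $Z$-local equivalences, and since $N$ is $Z$-local in $\cat{C}$ the functor $\R\Hom_{\cat{C}}(-,N)$ sends $Z$-local equivalences to weak equivalences by the characterization of $Z$-local objects recalled just before the lemma. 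Hence a $Z$-local equivalence $X\to Y$ of $S$-modules induces a degreewise weak equivalence of the corresponding cosimplicial objects, and therefore a weak equivalence on totalizations.

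The step I expect to be delicate is the second paragraph: one must produce the section $j$ of $\varepsilon$ functorially enough to conclude that $\varepsilon$ \emph{itself} is a $Z$-local equivalence (not merely that $M$ and $S\otimes^\L_RM$ are abstractly $Z$-locally equivalent), and one must take all the bar constructions, realizations and totalizations in a homotopically correct way so that levelwise $Z$-local equivalences are preserved — which is precisely where stability of $\cat{C}$ enters.
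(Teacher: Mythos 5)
Your proof is correct, but it is organized differently from the paper's. The paper argues in one step: after cofibrant/fibrant replacement, $\R\Hom_{\Mod_S}(M,N)$ and $\R\Hom_{\Mod_R}(u^*M,u^*N)$ are the homotopy limits of the cobar constructions $[n]\mapsto\Hom(S^{\otimes n}\otimes M,N)$ and $[n]\mapsto\Hom(R^{\otimes n}\otimes M,N)$ respectively, the comparison map is induced levelwise by $u$, each $R^{\otimes n}\otimes M\to S^{\otimes n}\otimes M$ is a $Z$-local equivalence, and $Z$-locality of $N$ finishes the argument. You instead first transpose across the $(\L u_!,u^*)$ adjunction to identify the map with restriction along the counit $\varepsilon\colon S\otimes^{\L}_RM\to M$, prove via the bar resolution and the retraction $\varepsilon\circ j\simeq\id$ that $\varepsilon$ is a $Z$-local equivalence, and then prove separately that $\R\Hom_{\Mod_S}(-,N)$ inverts $Z$-local equivalences of underlying objects. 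The ingredients (cobar/bar constructions, the fact that the derived tensor with any object preserves $Z$-local equivalences, and the defining property of $Z$-local objects) are the same, so neither route is more powerful; the paper's is shorter because it never leaves the category of $R$-modules, while yours is more modular and isolates two reusable statements (that base change along a $Z$-local equivalence of algebras is a $Z$-local equivalence, and that mapping into a $Z$-local module inverts $Z$-local equivalences). The step you flag as delicate is fine: the extra degeneracy of $\on{B}_\bullet(R,R,M)$ gives the section $j$ naturally, and two-out-of-three after smashing with $Z$ upgrades it to the statement about $\varepsilon$ itself; but note that the paper's direct comparison avoids having to worry about this at all.
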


\begin{proof}
After maybe taking a cofibrant replacement of $M$ and a fibrant replacement of $N$, the left hand side can be computed as the homotopy limit of the cobar construction
\[[n]\mapsto \Hom(S^{\otimes n}\otimes M,N)\]
Similarly, the left hand side can be computed as the homotopy limit of
\[[n]\mapsto\Hom(R^{\otimes n}\otimes M,N)\]
Since $R\to S$ is a $Z$-local weak equivalence so is $R^{\otimes n}\otimes M\to S^{\otimes n}\otimes M$ for each $n$. Thus, since $N$ is $Z$-local, the two cosimplicial objects are weakly equivalent, which implies that they have weakly equivalent homotopy limits.
\end{proof}

We can now state and prove the main theorem of this section. 

\begin{theo}\label{etale base change for Hochschild cohomology}
Let $T$ be a commutative algebra in $\cat{C}$ that is ($Z$-locally) \'etale as an $\oper{E}_d$-algebra over $\un$, then for any $\oper{E}_d$-algebra $A$ over $T$ (which is $Z$-local as an object of $\cat{C}$) the base-change map 
\[\HC{\oper{E}_d}(A)\to\HC{\oper{E}_d}(A|T)\]
is a weak equivalence.
\end{theo}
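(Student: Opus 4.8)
The plan is to reduce the statement to a comparison of derived homomorphisms of left modules over $\oper{E}_1$-enveloping algebras. Applying \ref{right hom}, \ref{equivalence}, \ref{equivalence of modules}, \ref{coro-equivalence enveloping factorization} and \ref{equivalence of the two definitions} once in $\cat{C}=\Mod_{\un}$ and once in $\Mod_T$ gives equivalences
\[\HC{\oper{E}_d}(A)\simeq\R\Hom^{[0,1]}_{U^{\un}}(A,A),\qquad \HC{\oper{E}_d}(A|T)\simeq\R\Hom^{[0,1]}_{U^{T}}(A,A),\]
where $U^{\un}=\int^{\un}_{S^{d-1}\times(0,1)}A$ and $U^{T}=\int^{T}_{S^{d-1}\times(0,1)}A$ are the $\oper{E}_1$-enveloping algebras computed in $\Mod_{\un}$ and in $\Mod_T$, and in both cases $A$ is the left module obtained by restricting $A=\int_D A$ along the polar-coordinate augmentation. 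Under these identifications the base-change map is the one induced by the canonical $\oper{E}_1$-algebra map $U^{\un}\to U^{T}$ (the base-change map $\int^{\un}_{-}A\to\int^{T}_{-}A$) together with the compatible module structures.

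The key geometric input I would isolate is a base-change formula for factorization homology: with $V^{\un}:=\int^{\un}_{S^{d-1}\times(0,1)}T$ — an $\oper{E}_1$-algebra mapping to $U^{\un}$ (via the unit $T\to A$ and functoriality of $\int^{\un}_{-}$ in the algebra) and augmented to $T$ by the polar-coordinate map $q : V^{\un}\to\int_D T=T$ — there is an equivalence of $\oper{E}_1$-algebras $U^{T}\simeq U^{\un}\otimes_{V^{\un}}T$ under which $U^{\un}\to U^{T}$ becomes $U^{\un}=U^{\un}\otimes_{V^{\un}}V^{\un}\to U^{\un}\otimes_{V^{\un}}T$. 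I expect this to be the hardest step; it should be proved by reducing to free $\oper{E}_d$-algebras (where both sides are explicit via the configuration-space formula and base change along the symmetric monoidal functor $-\otimes_{\un}T$), using that $\int_{-}A$ commutes with sifted colimits of algebras. The étaleness hypothesis enters only through $q$: by the reformulation of étaleness recorded above, which uses \ref{splitting}, the unit $T\to V^{\un}$ is a $Z$-local weak equivalence, hence so is its retraction $q$.

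The remainder is formal. Since $q$ is a $Z$-local equivalence its cofiber is $Z$-acyclic, hence stays $Z$-acyclic after smashing with any $\un$-module; therefore the map of two-sided bar constructions $\on{B}_\bullet(U^{\un},V^{\un},V^{\un})\to\on{B}_\bullet(U^{\un},V^{\un},T)$ is a degreewise $Z$-local equivalence, so $U^{\un}\to U^{\un}\otimes_{V^{\un}}T\simeq U^{T}$ is a $Z$-local weak equivalence of $\oper{E}_1$-algebras. After replacing these $\oper{E}_1$-enveloping algebras by weakly equivalent associative algebras — as in the proof of \ref{E_1 vs Ass} — the Lemma immediately preceding the statement applies, with $u : U^{\un}\to U^{T}$ the $Z$-local equivalence, with $M=N=A$ the left $U^{T}$-module whose restriction along $u$ is the left $U^{\un}$-module $A$, and with $A$ being $Z$-local by hypothesis; it gives that
\[\HC{\oper{E}_d}(A|T)\simeq\R\Hom_{\Mod_{U^{T}}}(A,A)\xrightarrow{\simeq}\R\Hom_{\Mod_{U^{\un}}}(A,A)\simeq\HC{\oper{E}_d}(A).\]
A routine verification with the unit and counit of the module base-change adjunction, which by the above are $Z$-local equivalences on the objects involved, identifies this equivalence with a homotopy inverse of the base-change map, so the base-change map is itself a weak equivalence.
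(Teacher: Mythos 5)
Your overall strategy is sound and ends in the same place as the paper -- both arguments reduce to showing that the map of enveloping algebras $U^{\un}=\int^{\un}_{S^{d-1}\times(0,1)}A\to U^{T}=\int^{T}_{S^{d-1}\times(0,1)}A$ is a $Z$-local weak equivalence of associative ($\oper{E}_1$-) algebras, identify these with the enveloping algebras via \ref{coro-equivalence enveloping factorization}, and then invoke the lemma on restriction along a $Z$-local equivalence of algebras with $Z$-local target module. The difference is in how that $Z$-local equivalence is produced. The paper gets it almost for free from Francis's transitivity cofiber sequence $u_!L_T\to L_A\to L_{A|T}$: étaleness of $T$ kills the first term, so $L_A\to L_{A|T}$ is a $Z$-local equivalence, and comparing the two cofiber sequences $\Sigma^{d-1}L\to\int_{S^{d-1}\times(0,1)}\to A$ over $\un$ and over $T$ finishes the job by stability. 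You instead propose a base-change formula $U^{T}\simeq U^{\un}\otimes_{V^{\un}}T$ with $V^{\un}=\int^{\un}_{S^{d-1}\times(0,1)}T$, feed the étaleness hypothesis in through the augmentation $V^{\un}\to T$ being a $Z$-local equivalence (correct, by the retraction argument via \ref{splitting}), and conclude by a bar-resolution argument. Your route has the virtue of not needing the transitivity sequence and of exhibiting $U^T$ explicitly as a relative tensor product, which is conceptually appealing.

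The caveat is that your "key geometric input" is exactly the point where all the work lives, and you only sketch it. The formula $\int^{T}_{M}A\simeq\int^{\un}_{M}A\otimes_{\int^{\un}_{M}T}T$ is true (it is a form of base change for factorization homology, consistent for instance with $A\otimes_T A\op\simeq(A\otimes_{\un}A\op)\otimes_{T\otimes_{\un}T}T$ when $d=1$), but proving it requires: identifying free $\oper{E}_d$-algebras in $\Mod_T$ with $F_{\oper{E}_d}(V)\otimes_{\un}T$ and computing both sides there via the configuration-space formula, checking that both sides of the comparison map commute with sifted colimits of $\oper{E}_d$-$T$-algebras (including that the forgetful functor to $\un$-algebras preserves them), and verifying the requisite module structures making the relative tensor product an $\oper{E}_1$-algebra. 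None of this is unreasonable, but it is a genuine theorem you would have to supply, and it is strictly more than the paper needs, since the paper's cofiber-sequence argument only requires the single comparison over $S^{d-1}\times(0,1)$ and imports the hard input from \cite[Proposition 2.11]{francistangent}. If you want to keep your route, you should either prove the base-change formula in full or locate a citable reference for it; as written, the proof is incomplete at its central step.
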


\begin{proof}
We write $A|T$ whenever we want to emphasize the fact that we are seeing $A$ as an $\oper{E}_d$-algebra over $T$. 

By Francis (\cite[Proposition 2.11]{francistangent}), there is cofiber sequence
\[u_!L_T\to L_A\to L_{A|T}\]
where $u:T\to A$ is the unit map and $u_!$ is the corresponding functor
\[u_!:S_{\kappa}^{d-1}\Mod_T\to S_{\kappa}^{d-1}\Mod_A\]
By hypothesis $L_T$ is ($Z$-locally) contractible, therefore $L_A\to L_{A|T}$ is a ($Z$-local) equivalence.

We have a base-change map of cofiber sequences
\[\xymatrix{
\Sigma^{d-1}L_A\ar[d]\ar[r]&\int_{S^{d-1}\times(0,1)}A\ar[d]\ar[r]&A\ar[d]^{\id}\ar[r]&\Sigma^dL_A\ar[d]\\
\Sigma^{d-1}L_{A|T}\ar[r]&\int_{S^{d-1}\times(0,1)}A|T\ar[r]&A\ar[r]&\Sigma^dL_{A|T}
}\]
This implies that $\int_{S^{d-1}\times(0,1)}A\to \int_{S^{d-1}\times(0,1)}A|T$ is a ($Z$-local) equivalence.

We can form the commutative diagram
\[
\xymatrix{
U_A^{S^{d-1}_{\kappa}}\ar[d]\ar[r]& \int_{S^{d-1}\times(0,1)}A\ar[d]\\
U^{S^{d-1}_{\kappa}}_{A|T}\ar[r]&\int_{S^{d-1}\times(0,1)}A|T
}
\]
where the horizontal maps are the maps of corollary \ref{coro-equivalence enveloping factorization}. These maps are weak equivalences by \ref{coro-equivalence enveloping factorization}. Thus, the map $U_A^{S^{d-1}_\kappa}\to U_{A|T}^{S^{d-1}_\kappa}$ is a ($Z$-local) weak equivalence of associative algebras. The theorem follows from this fact and the previous lemma.
\end{proof}

\begin{rem}
The computation of the previous section shows that $S\to E_n$ is $K(n)$-locally an \'etale morphism of $\oper{E}_d$-algebras for all $d$. Therefore, given a $K(n)$-local $E_n$-algebra $A$, one can compute its (higher) Hochschild cohomology over $E_n$ or over $S$ without affecting the result. This fact is used by Angeltveit (see \cite[Theorem 6.9.]{angeltveittopological}) in the case of ordinary Hochschild cohomology. 
\end{rem}

\bibliographystyle{alpha}
\bibliography{biblio}

\begin{thebibliography}{GTZ14}

\bibitem[And10]{andrademanifolds}
R.~Andrade.
\newblock {\em From manifolds to invariants of {E}n-algebras}.
\newblock PhD thesis, Massachusetts Institute of Technology, 2010.

\bibitem[Ang08]{angeltveittopological}
V.~Angeltveit.
\newblock Topological {H}ochschild homology and cohomology of {A}-infinity ring
  spectra.
\newblock {\em Geom. Topol}, 12:987--1032, 2008.

\bibitem[AT11]{aronerational}
G.~Arone and V.~Tourtchine.
\newblock On the rational homology of high dimensional analogues of spaces of
  long knots.
\newblock {\em arXiv preprint arXiv:1105.1576}, 2011.

\bibitem[BM09]{bergerderived}
C.~Berger and I.~Moerdijk.
\newblock On the derived category of an algebra over an operad.
\newblock {\em Georgian Mathematical Journal}, 16(1):13--28, 2009.

\bibitem[Fra12]{francisfactorization}
J.~Francis.
\newblock Factorization homology of topological manifolds.
\newblock {\em arXiv preprint arXiv:1206.5522}, 2012.

\bibitem[Fra13]{francistangent}
John Francis.
\newblock The tangent complex and {H}ochschild cohomology of rings.
\newblock {\em Compositio Mathematica}, 149(03):430--480, 2013.

\bibitem[GH04]{goerssmoduli}
P.~Goerss and M.~Hopkins.
\newblock Moduli spaces of commutative ring spectra.
\newblock {\em Structured ring spectra}, 315:151--200, 2004.

\bibitem[Gin08]{ginothigher}
G.~Ginot.
\newblock Higher order {H}ochschild cohomology.
\newblock {\em Comptes Rendus Math\'ematique}, 346(1):5--10, 2008.

\bibitem[GTZ12]{ginothigher2}
G.~Ginot, T.~Tradler, and M.~Zeinalian.
\newblock Higher {H}ochschild cohomology, brane topology and centralizers of
  ${E}_n$ -algebra maps.
\newblock {\em arXiv preprint arXiv:1205.7056}, 2012.

\bibitem[GTZ14]{ginotderived}
G.~Ginot, T.~Tradler, and M.~Zeinalian.
\newblock Derived higher {H}ochschild homology, topological chiral homology and
  factorization algebras.
\newblock {\em Communications in Mathematical Physics}, 326:635--686, 2014.

\bibitem[Hor13]{horelfactorization}
G.~Horel.
\newblock Factorization homology and calculus à la {K}ontsevich {S}oibelman.
\newblock {\em preprint}, 2013.

\bibitem[Hor14]{horeloperads}
G.~Horel.
\newblock Operads, modules and field theories.
\newblock {\em preprint}, 2014.

\bibitem[Hov04]{hoveyoperations}
M.~Hovey.
\newblock Operations and co-operations in morava $ e $-theory.
\newblock {\em Homology, Homotopy and Applications}, 6(1):201--236, 2004.

\bibitem[Isa09]{isaacsoncubical}
S.~Isaacson.
\newblock {\em Cubical homotopy theory and monoidal model categories}.
\newblock PhD thesis, Harvard University Cambridge, Massachusetts, 2009.

\bibitem[Pal07]{paluchhomotopy}
M.~Paluch.
\newblock Homotopy spectral sequences of pointed simplicial spaces.
\newblock {\em Homology, Homotopy and Applications}, 9(2):1--17, 2007.

\bibitem[Pir00]{pirashvilihodge}
T.~Pirashvili.
\newblock Hodge decomposition for higher order {H}ochschild homology.
\newblock In {\em Annales Scientifiques de l’{\'E}cole Normale
  Sup{\'e}rieure}, volume~33, pages 151--179. Elsevier, 2000.

\bibitem[PS14]{pavlovsymmetric}
D.~Pavlov and J.~Scholbach.
\newblock Symmetric operads in abstract symmetric spectra.
\newblock {\em arXiv preprint arXiv:1410.5699}, 2014.

\bibitem[Rez98]{rezknotes}
C.~Rezk.
\newblock Notes on the {H}opkins-{M}iller theorem.
\newblock {\em Contemporary Mathematics}, 220:313--366, 1998.

\bibitem[Rog08]{rognesgalois}
J.~Rognes.
\newblock {\em Galois extensions of structured ring spectra: stably dualizable
  groups}, volume 192.
\newblock Amer Mathematical Society, 2008.

\bibitem[Sch07]{schwedeuntitled}
S.~Schwede.
\newblock An untitled book project about symmetric spectra.
\newblock {\em preprint}, 2007.

\bibitem[Tho10]{thomaskontsevich}
J.~Thomas.
\newblock Kontsevich's {S}wiss {C}heese {C}onjecture.
\newblock {\em arXiv preprint arXiv:1011.1635}, 2010.

\bibitem[WG91]{weibeletale}
C.~Weibel and S.~Geller.
\newblock \'etale descent for {H}ochschild and cyclic homology.
\newblock {\em Commentarii Mathematici Helvetici}, 66(1):368--388, 1991.

\bibitem[Wil10]{willwacherkontsevich}
T.~Willwacher.
\newblock M. {K}ontsevich's graph complex and the {G}rothendieck-{T}eichmueller
  {L}ie algebra.
\newblock {\em arXiv preprint arXiv:1009.1654}, 2010.

\end{thebibliography}

\end{document}